\definecolor{myblue}{RGB}{0, 0, 255}
\definecolor{myred}{RGB}{255, 0, 25}
\newtheorem{theorem}{Theorem}[section]
\newtheorem{proposition}[theorem]{Proposition}
\newtheorem{lemma}[theorem]{Lemma}
\newtheorem{corollary}[theorem]{Corollary}
\theoremstyle{definition}
\newtheorem{remark}[theorem]{Remark}
\newtheorem{definition}[theorem]{Definition}
\newtheorem{example}[theorem]{Example}
\newcommand{\mb}[1]{\mathbb{#1}}
\newcommand{\norm}[1]{\left\| #1  \right\|}
\newcommand{\Z}{\mb Z}
\newcommand{\N}{\mb N}
\newcommand{\C}{\mb C}
\newcommand{\V}{\mathcal{V}}
\newcommand{\E}{\mathcal{E}}
\newcommand{\CP}{\mathcal{O}}
\newcommand{\calL}{\mathcal{L}}
\newcommand{\K}{\mathcal{K}}
\newcommand{\F}{\mathcal{F}}
\DeclareMathOperator{\Span}{span}
\DeclareMathOperator{\rank}{rank}
\DeclareMathOperator{\orb}{Orb}
\DeclareMathOperator{\interior}{int}
\DeclareMathOperator{\id}{id}
\title[Cuntz--Pimsner algebras of partial automorphisms twisted by vector bundles I]{Cuntz--Pimsner algebras of partial automorphisms twisted by vector bundles I: Fixed point algebra, simplicity and the tracial state space}
\author{Aaron Kettner}
\address{Department of Abstract Analysis\\ Institute of Mathematics, Czech Academy of Sciences, \v{Z}itn\'a 25, 115 67 Prague 1, Czech Republic}
\email{kettner@math.cas.cz}
\date{\today}
\subjclass[2020]{37A55, 46L35, 46L08}
\thanks{Funded by GA\v{C}R project GF22-07833K and \mbox{RVO: 67985840}. Part of this work was carried out while funded by GA\v{C}R project 20-17488Y. This work has been supported by Charles University Research Centre
program No. UNCE/24/SCI/022, and by the Charles University project SVV-2023-260721. The author is currently funded by GA\v{C}R project G25-15403K.}
\keywords{Partial automorphisms, $\mathrm{C}^*$-correspondences, classification of nuclear \mbox{$\mathrm{C}^{*}$-algebras}}
\begin{document}

\begin{abstract}
We associate a $C^*$-algebra to a partial action of the integers acting on the base space of a vector bundle, using the framework of Cuntz--Pimsner algebras. We investigate the structure of the fixed point algebra under the canonical gauge action, and show that it arises from a continuous field of $C^*$-algebras over the base space, generalising results of Vasselli. We also analyse the ideal structure, and show that for a free action, ideals correspond to open invariant subspaces of the base space. This shows that if the action is free and minimal, then the Cuntz--Pimsner algebra is simple. In the case of a line bundle, we establish a bijective corrrespondence between tracial states on the algebra and invariant measures on the base space. This generalizes results about the $C^*$-algebras associated to homeomorphisms twisted by vector bundles of Adamo, Archey, Forough, Georgescu, Jeong, Strung and Viola. 
\end{abstract}

\maketitle

\tableofcontents

\section{Introduction}

The machinery of Cuntz--Pimsner algebras, as pioneered in \cite{Pimsner:1997} and generalized in \cite{Katsura:2004}, provides a way to construct a $C^*$-algebra from a $C^*$-correspondence. Given two $C^*$-algebras $A$ and $B$, a $C^*$-correspondence is an $A$-$B$-bimodule with some extra structure. More precisely, if $A$ and $B$ are $C^*$-algebras, then an $A$-$B$ $C^*$-correspondence consists of a right Hilbert $B$-module together with a left $A$-action. They can be viewed as a class of morphisms of $C^*$-algebras more general than the usual $\ast$-homomorphisms. In that spirit, an $A$-$A$ $C^*$-correspondence---or $C^*$-correspondence over $A$ for short---is a natural generalization of a *-endomorphism  of $A$. 

Crossed products, especially crossed products by the integers, have been one of the most well-studied objects in $C^*$-algebra theory. For example, they appear in the construction of the noncommutative torus \cite{Rieffel:1981}, which is one of the most foundational examples in noncommutative geometry, and have been used to classify Cantor minimal systems \cite{GioPutSkau:orbit}. They also play an important role in the Elliott classification program. Indeed, classifiability of crossed products has been studied extensively, see for example \cite{Hirshberg2017}. Viewing a $C^*$-correspondence as a generalized morphism, Cuntz--Pimsner algebras are a natural generalization of crossed products by the integers. The resulting $C^*$-algebras are both tractable and include a wide range of naturally occurring examples. 

Of particular interest is the case when the coefficient $C^*$-algebra of the $C^*$-correspondence is commutative. In addition to generalizing crossed products by homeomorphisms, these Cuntz--Pimsner algebras include the well-studied graph $C^*$-algebras, and even the topological graph $C^*$-algebras from \cite{katsura:2004_2}. They also include crossed products by partial actions of the integers on commutative $C^*$-algebras, in the sense of \cite{Exel:2017}. 

The study of partial actions in the context of $C^*$-algebras was initiated by Exel in \cite{Exel:1994}, where he considered partial actions by the integers. Any crossed product by $\mathbb{Z}$ admits an action of $\mathbb{T}$ (the dual action). The converse need not hold, that is, a $C^*$-algebra with an action of $\mathbb{T}$ need not be isomorphic to a crossed product, even if the action is nice enough. However, in \cite{Exel:1994}, Exel showed that if the action is regular \cite[Definition 4.4]{Exel:1994}, then it is indeed a crossed product by a partial integer action. This generalised a result by Landstad for global actions \cite{Landstad:1979}. Partial actions have since been extended to more general discrete groups and semigroups and have become an important area of study in their own right \cite{Exel:2017}. In particular, they have turned out to be an important source of examples in Elliott's classification program \cite{deeleyputnamstrung:2018jiangsu, deeleyputnamstrung:2024}.  

In \cite{Abadie2003}, Abadie, Eilers, and Exel also considered the question of the structure of circle actions on $C^*$-algebras. They defined crossed products by Hilbert bimodules, and showed that a $C^*$-algebra is isomorphic to such a crossed product if and only if it admits a circle action which is semi-saturated \cite[Definition 4.1]{Exel:1994}. They give an example of an algebra admitting such a circle action, whose fixed point subalgebra is a unital commutative $C^*$-algebra, but which is not a crossed product by a partial automorphism \cite[Example 3.3.]{Abadie2003}.

Crossed products by partial automorphisms of compact Hausdorff spaces have a particularly nice theory. For example, there is a bijective correspondence between ideals of the crossed product and open subsets of the space invariant under the action \cite{Exel:2017}. This should be compared to the significantly more complicated theory for graph $C^*$-algebras (\cite{AnHuefRaeburn:1997, Katsura:2006}). Furthermore, the fixed point algebra of a partial crossed product under the canonical circle action recovers $C(X)$, the algebra of continuous functions on the space. If the action is topologically free then this fixed point algebra is a Cartan subalgebra of the crossed product \cite{Renault:2008}. Cartan subalgebras have received significant attention in recent years, thanks to the role they play in the classification program and the question of whether or not every nuclear $C^*$-algebra satisfies the Universal Coefficient Theorem (UCT) \cite{BarlakLi2017, Li:Cartan}.

Crossed products by minimal partial automorphisms on compact metric spaces with finite covering dimension also have finite nuclear dimension \cite{Geffen2021}, meaning they are covered by the classification theorem that says simple, separable, nuclear, unital, infinite-dimensional $C^*$-algebras which satisfy the UCT and have finite nuclear dimension are classified up to isomorphism by the Elliott invariant (see for example \cite{WinterICM}).

A important part of the Elliott invariant is the tracial state space. For crossed products by free partial automorphisms, there is a bijective correspondence between tracial states of the crossed product by a homeomorphism and invariant measures on the space. The tracial state space is a crucial piece of the Elliott invariant. Indeed, in \cite{Elliott1993:AI} Elliott was able to extend classification outside the case of real rank zero to simple inductive limits of interval algebras using $K$-theory along with the tracial state space. (In the case of real rank zero, tracial information is contained in the ordered $K_0$-group.) Thanks to classification, we see that isomorphism in certain classes of $C^*$-algebras---for example the crossed products of $S^d$, $d \geq 3$ and odd integer, by minimal diffeomorphisms, or the simple inductive limits of dimension drop algebras with vanishing $K_1$-group---depend only on their tracial state space \cite{Str:XxSn, JiangSu99}.

In summary, there is an abundance of nice structural results for partial crossed products by the integers over commutative $C^*$-algebras. Since Cuntz--Pimsner algebras arising from $C^*$-correspondences are a natural generalisation of such crossed products, this raises the following question: If $X$ is a locally compact Hausdorff space, then what is the largest class of $C^*$-correspondences over $C(X)$ for which the techniques used for crossed products by the integers can be adapted? 

In \cite{adamo2023} the authors studied $C^*$-correspondences whose left action is implemented by a homeomorphism. More precisely, they considered a compact metric space $X$, a homeomorphism $\alpha$ and a vector bundle $\V$ over $X$. The space of continuous sections $\Gamma(\V)$ of the vector bundle is a right Hilbert $C(X)$-module. In fact, by the Serre--Swan theorem \cite{Swan:1962} every Hilbert $C(X)$-module is of that form as long as it is finitely generated and projective. The left action is defined as 
\begin{equation}\label{eq:left action}
    \varphi(f)\xi=\xi(f\circ\alpha)
\end{equation} 
for $f\in C(X)$ and $\xi\in\Gamma(\V)$. If $\V$ is the trivial line bundle $X\times\C$, then $\Gamma(\V)$ is isomorphic to $C(X)$ as right Hilbert $C(X)$-modules. The associated Cuntz--Pimsner algebra is isomorphic to the crossed product $C(X)\ltimes_\alpha\Z$, see \cite[Proposition 2.23]{Muhly:1998}. 

Thus, the class of $C^*$-correspondences studied in \cite{adamo2023} naturally arises from the desire to generalize techniques used for crossed products by the integers on commutative $C^*$-algebras. This class consists of precisely those $C^*$-correspondences over $C(X)$ whose left action is of the same form as for crossed products, with the additional assumption of finitely generatedness and projectiveness on the Hilbert module. This last assumption is of a technical nature, and can likely be removed.

In this paper we generalize some of the results of \cite{adamo2023} to partial $\Z$-actions on locally compact spaces. The $C^*$-correspondences we consider are those with a left action of the form (\ref{eq:left action}). However, we replace the homeomorphism $\alpha$ with a homeomorphism $\theta$ between two open subsets $U$ and $V$ of $X$. Such a partial homeomorphism on $X$ gives rise to a partial $\Z$-action in the sense of \cite{Exel:2017}. We usually call $\theta$ a \emph{partial automorphism}.

We have noted above that for partial crossed products, the coefficient algebra $C(X)$ can be recovered as the fixed point algebra of the canonical gauge action, and that it is a Cartan subalgebra if the action is topologically free. For partial automorphisms twisted by vector bundles, this is only true if the vector bundle is a line bundle. If the vector bundle is higher rank, then one should expect the fixed point algebra to be noncommutative and closely related to the UHF algebra $M_{n^\infty}$, where $n$ is the rank of the vector bundle. This is the case since the fixed point algebra of the Cuntz algebra $\CP_n$ under the canonical gauge action is given by $M_{n^\infty}$ \cite{Cuntz:1977}, and $\CP_n$ can be obtained in our formalism from considering the trivial action and vector bundle on a one-point space. We investigate the structure of the fixed point algebra in our setting in Section \ref{sect:cuntz pimsner algebras from twisted partial autos}.

As expected, for a free action the ideal structure of the Cuntz--Pimsner algebra is determined by dynamical properties of the action in the same way that this is the case for partial crossed products by the integers. For the tracial state space we generalise results from \cite{adamo2023}, again in the case of a free action: If the vector bundle is a line bundle, meaning that all its fibers are one-dimensional, then the usual bijective correspondence between traces and invariant measures holds. In particular, those Cuntz--Pimsner algebras are stably finite.

Further connections to classification will be explored in a second paper: Under suitable conditions on the compact Hausdorff space $X$, it follows from the general theory of Cuntz--Pimsner algebras \cite{Katsura:2004} that the Cuntz--Pimsner algebra of a partial automorphism twisted by a vector bundle is nuclear, separable and satisfies the UCT. In this second paper we will investigate under which assumptions our Cuntz--Pimsner algebras have finite nuclear dimension (and are thus classifiable), and again find that it is analogous to the case of partial crossed products. In future work, we will determine the Elliott-invariant of our algebras. Additionally to the traces, this includes $K$-theory as well as the pairing between $K$-theory and traces.
\subsection{Summary of the paper}
Section \ref{sect:hilbert modules correspondences and cuntz pimsner algebras} recalls the necessary background on Hilbert modules, $C^*$-correspondences and Cuntz--Pimsner algebras. It contains very little that is new, apart from some basic results which we could not find in the literature. 

The main results of the paper are contained in Sections \ref{sect:cuntz pimsner algebras from twisted partial autos} and \ref{sect:ideal structure and tracial state space}. We start in Section \ref{sect:cuntz pimsner algebras from twisted partial autos} by discussing Hilbert $C_0(X)$-modules. Since we want to consider locally compact spaces, we need a generalization of the Serre--Swan theorem from the compact case. For this reason we introduce properties of Hilbert $C_0(X)$-modules which we call $\sigma$-finitely generated and $\sigma$-projective. By definition, a Hilbert $C_0(X)$-module has these properties whenever any restriction to a compact subset of $X$ is finitely generated and projective, respectively. We outline how to show that such modules are exactly the Hilbert $C_0(X)$-modules $\Gamma_0(\V)$ of continuous sections vanishing at infinity of a vector bundle $\V$ over $X$. This is the desired Serre--Swan type theorem. 

After reviewing the basics of partial actions, we construct a $C^*$-correspondence from a vector bundle and a partial automorphism acting on the base space. The first main result of the paper concerns the fixed point algebra under the canonical gauge action. We show that it arises from a continuous field of $C^*$-algebras. The fiber over a point $x\in X$ is either a matrix algebra or a UHF algebra, depending on how often the partial automorphism can be applied to $x$. This generalizes results from \cite{Vasselli:2005}. 

Section \ref{sect:ideal structure and tracial state space} investigates the ideal structure as well as the tracial state space of our Cuntz--Pimsner algebras. Regarding the ideal structure, we show that the known results about (partial) crossed products still hold in our setting. Under the additional assumption of freeness, ideals in the Cuntz--Pimsner algebra correspond to open invariant subsets of the base space $X$, see Theorem \ref{thm:ideals in Cuntz--Pimsner algebra}. Thus the Cuntz--Pimsner algebra associated to a free and minimal partial automorphism is simple, regardless of what the vector bundle looks like. We show that if the vector bundle is rank one, then the tracial state space is affinely homeomorphic to the compact convex set of invariant measures on $X$, see Proposition \ref{prop:affine homeomorphism}. This aligns well with results from \cite{adamo2023}.\\

\textit{Acknowledgements:} I would like to thank my supervisor Karen Strung for giving me this project, and for her relentless encouragement and support. This work will form part of the author's PhD thesis.
\section{Hilbert modules, \texorpdfstring{$C^*$}{C*}-correspondences and Cuntz--Pimsner algebras}\label{sect:hilbert modules correspondences and cuntz pimsner algebras}
In this paper we study the Cuntz--Pimsner algebra associated to a $C^*$-correspondence over a commutative $C^*$-algebra $C_0(X)$, which we construct from a partial automorphism on the locally compact Hausdorff space $X$. In this section we introduce the necessary background:  We start by reviewing Hilbert $C^*$-modules, since $C^*$-correspondences are Hilbert $C^*$-modules with extra structure. We then turn to $C^*$-correspondences, their morphisms and representations, and finally to Cuntz--Pimsner algebras.

\subsection{Hilbert \texorpdfstring{$C^*$}{C*}-modules}\label{sect:Hilbert modules}

The definition as well as basic properties of Hilbert $A$-modules, where $A$ is a $C^*$-algebra, can be found in \cite[Chapter 1]{Lance:1995} and \cite[Chapter 15.2]{Wegge-Olsen:1993}. We will only consider right Hilbert $A$-modules. They will usually be denoted by $\E$, and the $A$-valued inner product by $\langle\cdot,\cdot\rangle_\E$. We write $\langle\E,\E\rangle_\E\coloneqq\overline{\Span}\{\langle\xi,\eta\rangle_\E:\xi,\eta\in\E\}$, where $\overline{\Span}(S)$ denotes the closure of the linear span of a subset $S$. Then $\langle\E,\E\rangle_\E$ is an ideal in $A$, and hence $\E\langle\E,\E\rangle_\E$ is contained in $\E$. It is an easy application of Cohen's factorization theorem that they are in fact equal. A right Hilbert $A$-module $\E$ is called \emph{full} if $\langle\E,\E\rangle_\E=A$. 

The $C^*$-algebra of adjointable $A$-linear operators on $\E$ is denoted by $\calL(\E)$. For $\xi,\eta\in\E$ we define an operator $\theta_{\xi,\eta}\in\calL(\E)$ by $\theta_{\xi,\eta}\zeta=\xi\langle\eta,\zeta\rangle_\E$. We call $\theta_{\xi,\eta}$ a \emph{rank-one operator}. The subalgebra of compact operators $\K(\E)\subset\calL(\E)$ is defined as the closure of the $A$-linear span of rank-one operators inside of $\calL(\E)$. The $C^*$-algebra $\calL(\E)$ is isomorphic to the multiplier algebra of the compacts, $M(\K(\E))$.

\begin{definition}\label{def:morphism of Hilbert modules}
    Let $A$ and $B$ be two $C^*$-algebras, and let $\E$ and $\F$ be right Hilbert $A$- and $B$-modules, respectively. A \emph{morphism of Hilbert modules} from $\E$ to $\F$ is a pair $(\phi,T)$ consisting of a $\ast$-homomorphism $\phi$ from $A$ to $B$ and a linear map $T$ from $\E$ to $\F$ satisfying
    \begin{enumerate}
        \item $\langle T(\xi),T(\eta)\rangle_\F=\phi(\langle\xi,\eta\rangle_\E)$ for $\xi,\eta\in \E$;
        \item $T(\xi a)=T(\xi)\phi(a)$ for $a\in A,\xi\in\E$. 
    \end{enumerate}
    If $A=B$ and the $\ast$-homomorphism $\phi$ is not made explicit, then by a right Hilbert $A$-module morphism $T$ from $\E$ to $\F$ we mean the pair $(\id_A,T)$.
\end{definition}

Let $A$ be a $C^*$-algebra and let $\E$ be a right Hilbert $A$-module. Take a subset $S$ of $\E$ and write 
\[\Span_A S=\{\sum_{i=1}^n s_ia_i|n\in\N, s_1,...,s_n\in S, a_1,...,a_n\in A\}\]
for the $A$-linear span of $S$.
     We say that $S$ \emph{generates} $\E$ if $\Span_AS$ is dense in $\E$. If there exists a finite or countable set generating $\E$ then we call $\E$ \emph{finitely generated} or \emph{countably generated}, respectively. If there exists a set $S$ such that $\Span_A S$ is equal to $\E$, then $\E$ is called \emph{algebraically finitely generated}.
   A Hilbert $A$-module is called \emph{projective} if it is projective as an $A$-module, namely if it is an (algebraic) direct summand in some free $A$-module.

One can show that if $\E$ is an algebraically finitely generated and projective $A$-module, then there exists an inner product on $\E$ which is unique up to isometry \cite{Waldmann:2021}. This inner product makes $\E$ a right Hilbert $A$-module. This implies that if $\E$ is an algebraically finitely generated and projective $A$-module, then any inner product makes $\E$ into a Hilbert module, meaning that completeness is automatic. We also obtain that if two algebraically finitely generated and projective Hilbert $A$-modules are isomorphic as $A$-modules, then they are isomorphic as Hilbert $A$-modules. 
\begin{definition}
    A \emph{finite frame} is a finite collection of elements $\zeta_1,...,\zeta_n$ of $\E$ such that for every $\eta$ in $\E$ we have $\eta=\sum_{i=1}^n\theta_{\zeta_i,\zeta_i}\eta$. A \emph{countable frame} is a sequence of elements $(\zeta_i)_{i=1}^\infty$ such that for every $\eta$ in $\E$ we have $\eta=\sum_{i=1}^\infty\theta_{\zeta_i,\zeta_i}\eta$ where the series converges in norm. 
\end{definition}
\begin{remark}
    What we call a (finite or countable) frame is called \emph{basis} in \cite{Kajiwara:2004} and \emph{standard normalized tight frame} in \cite{Frank:2002}. That the definition in \cite{Frank:2002} agrees with ours follows from \cite[Theorem 4.1]{Frank:2002}. 
\end{remark}
A Hilbert $A$-module which has a finite or countable frame is evidently algebraically finitely generated or countably generated, respectively. The converse also holds, see \cite[Proposition 1.2]{Kajiwara:2004}:
If a Hilbert $A$-module is finitely or countably generated, where in the finitely (countably) generated case we require $A$ to be ($\sigma$-)unital, then it has a finite or countable frame, respectively. 
We collect the following lemma for later use.
\begin{lemma}\label{lem:frame induces approx unit in compacts}
    Let $I$ be a finite or countable index set, and let $(\zeta_i)_{i\in I}$ be a finite or countable frame. Then $(\sum_{i\in F}\theta_{\zeta_i,\zeta_i})_{F\subset I}$, where $F$ ranges over all finite subsets of $I$, is an approximate unit for $\K(\E)$.
\end{lemma}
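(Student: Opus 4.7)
The plan is as follows. The finite subsets of $I$ form a directed set under inclusion, so $(P_F)_F$ with $P_F := \sum_{i \in F}\theta_{\zeta_i,\zeta_i}$ is a net in $\K(\E)$. To verify that it is an approximate unit, I would establish (i) each $P_F$ is positive and self-adjoint, (ii) the net is uniformly bounded (in fact $\|P_F\| \leq 1$), and (iii) $\|P_F T - T\| \to 0$ and $\|T P_F - T\| \to 0$ for every $T \in \K(\E)$.

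Observation (i) is immediate: each rank-one operator $\theta_{\zeta_i,\zeta_i}$ is positive, since $\langle \theta_{\zeta_i,\zeta_i}\eta,\eta\rangle_\E = \langle \zeta_i,\eta\rangle_\E^{*}\langle \zeta_i,\eta\rangle_\E \geq 0$ in $A$. For (ii), the frame identity $\eta = \sum_{i\in I}\theta_{\zeta_i,\zeta_i}\eta$ together with continuity of the inner product in one variable gives the Parseval-type identity
\[
\langle \eta,\eta\rangle_\E \;=\; \sum_{i \in I}\langle \zeta_i,\eta\rangle_\E^{*}\langle \zeta_i,\eta\rangle_\E
\]
in $A$. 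Since every summand is positive, any finite partial sum satisfies $\langle P_F \eta,\eta\rangle_\E \leq \langle \eta,\eta\rangle_\E$, i.e.\ $P_F \leq \Id_\E$ in $\calL(\E)$, and hence $\|P_F\| \leq 1$.

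For (iii) I first handle rank-one operators. A direct computation using $A$-linearity on the right yields
\[
P_F \, \theta_{\xi,\eta} \;=\; \theta_{P_F\xi,\eta}, \qquad \theta_{\xi,\eta}\, P_F \;=\; \theta_{\xi,P_F\eta},
\]
the second identity using self-adjointness of $P_F$. Combined with the standard estimate $\|\theta_{\mu,\nu}\| \leq \|\mu\|\,\|\nu\|$ and the frame property $P_F\xi \to \xi$, $P_F\eta \to \eta$ in norm as $F$ exhausts $I$, this gives $\|P_F\theta_{\xi,\eta} - \theta_{\xi,\eta}\| \to 0$ and $\|\theta_{\xi,\eta}P_F - \theta_{\xi,\eta}\| \to 0$. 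By linearity the same holds on the $A$-linear span of rank-one operators, which is dense in $\K(\E)$; a standard three-$\varepsilon$ argument using the uniform bound $\|P_F\|\leq 1$ extends the convergence to arbitrary $T \in \K(\E)$.

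There is no real obstacle here; the only step that requires care is the uniform bound in (ii), which is not a priori obvious from the mere strong convergence of partial sums to the identity in $\calL(\E)$, but falls out cleanly from the Parseval identity above.
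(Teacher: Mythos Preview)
Your argument is correct and follows essentially the same route as the paper: both reduce to rank-one operators via the identity $P_F\,\theta_{\xi,\eta}=\theta_{P_F\xi,\eta}$, invoke the frame property $P_F\xi\to\xi$, and then pass to all of $\K(\E)$ by density. Your version is in fact more careful than the paper's, since you explicitly establish the uniform bound $\|P_F\|\le 1$ via the Parseval identity; the paper's proof asserts that convergence on the dense span of rank-one operators suffices without supplying this bound, which is what makes the three-$\varepsilon$ extension legitimate.
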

\begin{proof}
     Rank-one operators densely span $\K(\E)$. Hence it is sufficient to show that for all $\xi,\eta\in\E$ the expression $\sum_{i\in F}\theta_{\zeta_i,\zeta_i}\theta_{\xi,\eta}$ can be made arbitrarily close to $\theta_{\xi,\eta}$ by the right choice of a finite subset $F\subset I$. This follows from the inequality 
    \[\norm{\sum_{i\in F}\theta_{\zeta_i,\zeta_i}\theta_{\xi,\eta}-\theta_{\xi,\eta}}=\norm{\theta_{(\sum_{i\in F}\theta_{\zeta_i,\zeta_i}\xi-\xi),\eta}}\leq\norm{\sum_{i\in F}\theta_{\zeta_i,\zeta_i}\xi-\xi}\norm{\eta}.\]
    By the definition of a frame we can make the norm of $\sum_{i\in F}\theta_{\zeta_i,\zeta_i}\xi-\xi$ arbitrarily small. This finishes the proof.
\end{proof}

 To finish this section we briefly turn to quotients of Hilbert modules. The mentioned results can be found in \cite[Section 1]{Katsura2007}.

    Let $\E$ be a right Hilbert $A$-module, and let $I$ be an ideal in $A$. Then $\E I=\{\xi a:\xi\in\E,a\in I\}$ is a right Hilbert $I$-module with right multiplication and inner product inherited from $\E$. The quotient $\E_I\coloneqq \E/\E I$ is a right Hilbert $A/I$-module with right multiplication $[\xi]_I[a]_I=[\xi a]_I$ and inner product $\langle[\xi]_I,[\eta]_I\rangle_I=[\langle\xi,\eta\rangle]_I$ for $\xi,\eta\in\E$ and $a\in A$. 
    
\subsection{Hilbert \texorpdfstring{$C(X)$}{C(X)}-modules}\label{sect:Hilbert C(X)-modules}
The only Hilbert modules we will be concerned with in this paper, and one of the main examples in general, are Hilbert modules over commutative $C^*$-algebras. In this section, we only consider modules over unital commutative $C^*$-algebras, that is Hilbert $C(X)$-modules for a compact Hausdorff space $X$. The classical theorem of Serre--Swan \cite{Swan:1962} states that any finitely generated and projective $C(X)$-module arises as the space of continuous sections of some vector bundle over $X$. Therefore we first need to review the basic theory of vector bundles.

An introduction to vector bundles can be found in \cite{Husemoller:1993}. We denote a vector bundle over $X$ by $\mathcal{V}=[E,p,X]$ where $E$ is the total space, $X$ is the base space, and $p$ is a surjective continuous map from $E$ to $X$ called the bundle projection. All vector bundles in this work will be complex. 

A \emph{bundle morphism} between two vector bundles $[E_1,p_1,X]$ and $[E_2,p_2,X]$ over the same base space is a continuous map $f$ from $E_1$ to $E_2$ fixing the base space, that is we have $p_2\circ f=p_1$. Note that what we here call a bundle morphism is in \cite{Husemoller:1993} called a bundle morphism \emph{over} $X$. 

As is standard, we require our vector bundles to be locally trivial, meaning that for every point $x\in X$ there exist a neighborhood $U$ of $x$ and a bundle morphism $h$ from $U\times\C^{n_x}$ to $p^{-1}(U)$ inducing an isomorphism on each fiber, for some $n_x\in\N$. The pair $(U,h)$ is called a \emph{chart}, and a collection of charts such that the open sets cover $X$ is called an \emph{atlas}. Note that by definition of a chart, the rank of $\V$ is constant over $U$. More precisely, we have $\rank\V_y=n_x$ for all $y\in U$.

\emph{Sections} of a vector bundle $\V$ will be denoted by $\xi$, $\eta$ or $\zeta$. Spaces of continuous sections will play a very important role in the main part of this work. To put a norm on these spaces of sections we need a \emph{metric} on the vector bundle. A metric on $\V$ is a continuous map $\beta$ from the total space of $\V\oplus\V$ to $\C$ such that for each $x\in X$, restricting $\beta$ to $p^{-1}(x)\times p^{-1}(x)$ yields an inner product on $p^{-1}(x)$.

    Every complex vector bundle with paracompact base space has an hermitian metric, as is shown in \cite[Chapter 3]{Husemoller:1993}. An atlas $\{(U_i,h_i)\}$ is called \emph{metric preserving} if 
    \begin{equation}\label{eq:metric preserving}
        (v|w)=\beta(h_i(x,v),h_i(x,w))\quad\text{for every }x\in X \text{ and } v,w\in\C^{N_i},
    \end{equation}
    where $(\cdot|\cdot)$ is the standard inner product on $\C^{N_i}$. Metric preserving atlases have unitary transition functions.

 Let $\V_1=[E_1,p_1,X]$ and $\V_2=[E_2,p_2,X]$ be vector bundles over a space $X$. Let $\beta_1$ and $\beta_2$ be hermitian metrics on $\V_1$ and $\V_2$, respectively. A bundle morphism $f$ from $\V_1$ to $\V_2$ is called \emph{metric preserving} if $\beta_2(f(v),f(w))=\beta_1(v,w)$ for all $v$ and $w$ in $E_1$. If $f$ is metric preserving and bijective, then we call it an \emph{isometry}. 

The following result is certainly well known, but since we could not find a reference for it, we include the proof.
\begin{theorem}\label{thm:metric uniqueness}
    Let $\V$ be a vector bundle on a paracompact space. Let $\beta$ and $\Tilde{\beta}$ be two hermitian metrics on $\V$. Then there exists an isometry from $(\V,\beta)$ to $(\V,\Tilde{\beta})$. 
\end{theorem}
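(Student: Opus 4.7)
The plan is fiber-wise linear algebra followed by a continuity argument. For each fixed $x\in X$, the two hermitian forms $\beta_x$ and $\tilde\beta_x$ are inner products on the same finite-dimensional complex vector space $\V_x$. By the standard Riesz-type argument for inner products on finite-dimensional spaces, there exists a unique operator $A_x\in\End(\V_x)$ which is self-adjoint and positive definite with respect to $\beta_x$ and satisfies
\[\tilde\beta_x(v,w)=\beta_x(A_x v,w)\quad\text{for all }v,w\in\V_x.\]
Let $B_x\coloneqq A_x^{-1/2}$, computed by functional calculus with respect to $\beta_x$; this is again $\beta_x$-self-adjoint and positive. I would then define a fiber-preserving map $f\colon E\to E$ by $f(v)=B_{p(v)}v$. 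Using $\beta_x$-self-adjointness of $B_x$ one checks
\[\tilde\beta(f(v),f(w))=\beta(A_xB_xv,B_xw)=\beta(B_xA_xB_xv,w)=\beta(v,w),\]
so $f$ is fiberwise an isometry from $(\V_x,\beta_x)$ to $(\V_x,\tilde\beta_x)$, and bijective because $B_x$ is invertible.

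The only real work is to show $f$ is continuous as a map $E\to E$. Here I would argue locally. Since $X$ is paracompact, by the theorem mentioned in the excerpt (\cite[Chapter~3]{Husemoller:1993}) there exists a $\beta$-metric-preserving atlas. Pick a chart $(U,h)$ from this atlas with $h\colon U\times\C^n\xrightarrow{\cong}p^{-1}(U)$. In this chart $\beta$ corresponds to the standard inner product on each fiber, while $\tilde\beta$ is given by a positive-definite matrix-valued function $x\mapsto \tilde H_x\in M_n(\C)$, which is continuous in $x$ by continuity of $\tilde\beta$. In these coordinates $A_x$ is represented exactly by $\tilde H_x$, so $x\mapsto A_x$ is continuous in operator norm.

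The passage from $A$ to $B=A^{-1/2}$ is then a functional calculus step: on any compact subset $K\subseteq U$, the eigenvalues of $A_x$ are bounded away from $0$ and $\infty$ (by continuity and compactness), so on a neighborhood of their joint spectrum the function $t\mapsto t^{-1/2}$ is the uniform limit of polynomials, and hence $x\mapsto A_x^{-1/2}$ is continuous on $K$. Thus $x\mapsto B_x$ is continuous on $U$, and so the formula $h(x,v)\mapsto h(x,B_x v)$ defines a continuous map on $p^{-1}(U)$. Since these locally-defined isometries are uniquely determined fiber-wise (and coincide on overlaps), they glue to a globally defined continuous bundle morphism $f$. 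The main (mild) obstacle is the continuity of the square root, which is handled by the compactness and spectrum-boundedness observation above; everything else is a direct unwinding of definitions.
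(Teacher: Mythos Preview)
Your proof is correct and takes a genuinely different route from the paper's. The paper chooses metric-preserving atlases $\{(U_i,h_i)\}$ and $\{(U_i,\tilde h_i)\}$ for $\beta$ and $\tilde\beta$ respectively, compares them through the change-of-frame maps $r_i:U_i\to\GL(N_i)$ satisfying $\tilde g_{ij}=r_i^{-1}g_{ij}r_j$, takes the polar decomposition $r_i=|r_i|u_i$, and checks by hand the cocycle identity $g_{ij}u_j=u_i\tilde g_{ij}$ so that the local maps $(x,v)\mapsto(x,u_i^*v)$ glue. Your argument instead produces the isometry intrinsically: the operator $A_x$ with $\tilde\beta_x(v,w)=\beta_x(A_xv,w)$ is defined without reference to any chart, and so is $B_x=A_x^{-1/2}$; charts enter only to verify continuity. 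This spares you the cocycle verification entirely and yields a canonical isometry (the unique $\beta$-positive one), whereas the paper's isometry depends on the chosen atlases. One small remark: your phrase ``uniquely determined fiber-wise'' is slightly misleading, since there are many fiberwise isometries between $(\V_x,\beta_x)$ and $(\V_x,\tilde\beta_x)$; what makes the gluing trivial is that $B_x$ is defined intrinsically, so there is nothing to glue. Also, the citation for metric-preserving atlases in the paper is \cite[Chapter~5, Theorem~7.4]{Husemoller:1993} rather than Chapter~3, though in fact you do not strictly need such an atlas---any trivialization would do, at the cost of $A_x$ having matrix $H_x^{-1}\tilde H_x$ rather than $\tilde H_x$.
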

 \begin{proof}
     According to \cite[Chapter 5, Theorem 7.4]{Husemoller:1993} there exist metric-preserving atlases $\{(h_i,U_i)\}$ and $\{(\Tilde{h}_i,\Tilde{U}_i)\}$ associated to $\beta$ and $\Tilde{\beta}$, respectively. Let $g_{ij}$ and $\Tilde{g}_{ij}$ be systems of unitary transition functions for the atlases. We can assume that the open covers $\{U_i\}$ and $\{\Tilde{U}_i\}$ are the same, otherwise we use the open cover of intersections $U_i\cap \Tilde{U_j}$. 
     
     Since both atlases describe the same vector bundle, they are equivalent (see \cite[Chapter 5, Theorem 2.7]{Husemoller:1993}). Hence there exist functions $r_i:U_i\to GL(N_i)$ satisfying $\Tilde{g}_{ij}=r_i^{-1}g_{ij}r_j$. Using this equality and the fact that $g_{ij}^*=g_{ij}^{-1}=g_{ji}$, we obtain
     \begin{align*}   r_ir_i^*g_{ij}=r_i(g_{ij}^*r_i)^*=r_i(g_{ji}r_i)^*=r_i(r_j\tilde{g}_{ji})^*  =r_i\tilde{g}_{ij}r_j^* =g_{ij}r_jr_j^*.
     \end{align*}
     This implies $P(r_ir_i^*)g_{ij}=g_{ij}P(r_jr_j^*)$ for all polynomials $P$. Using functional calculus for the positive matrix $r_ir_i^*$ we obtain $|r_i|^{-1}g_{ij}=g_{ij}|r_j|^{-1}$. 

     Consider the polar decomposition $r_i=|r_i|u_i$, where $u_i=|r_i|^{-1}r_i$ is unitary. We have \[g_{ij}u_j=g_{ij}|r_j|^{-1}r_j=|r_i|^{-1}g_{ij}r_j=|r_i|^{-1}r_i\Tilde{g}_{ij}=u_i\Tilde{g}_{ij}.\]
     This allows us to define a bundle morphism $f$ on $\V$ in the same way as in the proof of \cite[Chapter 5, Theorem 2.7]{Husemoller:1993}. We first define $f_i:U_i\times\C^{N_i}\to U_i\times\C^{N_i}$ by $f_i(x,v)=(x,u_i^*v)$. Let $f$ be the map on $\V$ that restricted to $p^{-1}(U_i)$ is equal to $\Tilde{h}_i\circ f_i\circ h_i^{-1}$. This is well defined: We have \begin{align*}
         (\Tilde{h}_j\circ f_j\circ h_j^{-1})(h_i(x,v))&=(\Tilde{h}_j\circ f_j)(x,g_{ji}v))=\Tilde{h}_j(x,u_j^*g_{ji}v)\\&=\Tilde{h}_j(x,\Tilde{g}_{ji}u_i^*v)=\Tilde{h}_i(x,u_i^*v)=(\Tilde{h}_i\circ f_i\circ h_i^{-1})(h_i(x,v)).
     \end{align*}
     Since $f$ is locally a bijection it is an isomorphism of vector bundles. To show that $f$ is an isometry, we use (\ref{eq:metric preserving}) to calculate
     \begin{align*}
         \Tilde{\beta}(f(h_i(x,v)),f(h_i(x,w)))&=\Tilde{\beta}(\Tilde{h}_i\circ f_i(x,v),\Tilde{h}_i\circ f_i(x,v))\\&=\Tilde{\beta}(\Tilde{h}_i(x,u_i^*v),\Tilde{h}_i(x,u_i^*v))=(u_i^*v|u_i^*w)=(v|w)\\&=\beta(h_i(x,v),h_i(x,w)),
     \end{align*}
     which concludes the proof.
 \end{proof}
Let $\V=[E,p,X]$ be a vector bundle on a compact Hausdorff space $X$. We write $\Gamma(\V)$ for the vector space of continuous sections of $\V$, meaning continuous maps $\xi:X\to E$ such that $p\circ \xi=\id_X$. The space of sections $\Gamma(\V)$ is a right $C(X)$-module, where multiplication of a function $f$ with a section $\xi$ is defined as $(\xi\cdot f)(x)=\xi(x)f(x)$. The theorem of Serre--Swan \cite{Swan:1962} asserts that $\Gamma(\V)$ is algebraically finitely generated and projective and that every algebraically finitely generated projective $C(X)$-module arises in this way. In fact, sending a vector bundle $\mathcal{V}$ to its space of continuous sections $\Gamma(\mathcal{V})$, establishes an equivalence of categories from the category of vector bundles over $X$ with bundle morphisms to the category of algebraically finitely generated projective $C(X)$-modules with module morphisms. 

We can use the unique metric $\beta$ to define an inner product on $\Gamma(\V)$ by the formula 
\[\langle\xi,\eta\rangle(x)=\beta(\xi(x),\eta(x))\quad\text{for }x\in X,\]
where $\xi$ and $\eta$ are elements of $\Gamma(\V)$. Since $\Gamma(\V)$ is finitely generated and projective it will be complete with respect to this inner product, that is, $\Gamma(\V)$ is a Hilbert $C(X)$-module. Sending a vector bundle to its space of continuous sections now yields an equivalence of categories between the category of vector bundles equipped with hermitian metrics, and metric preserving bundle morphisms on one hand, and the category of algebraically finitely generated and projective right Hilbert $C(X)$-modules with Hilbert module morphisms on the other hand.

We know that $\Gamma(\V)$ has a finite frame, since it is algebraically finitely generated. We can however also construct a finite frame explicitly: Let $(U_i,h_i)_{i=1}^m$ be a metric-preserving atlas of $\V$, where as usual $h_i:U_i\times\C^{n_i}\to p^{-1}(U_i)$ is a fiber-preserving homeomorphism. Let $\{\gamma_i\}_{i=1}^m$ be a partition of unity subordinate to the open cover $\{U_i\}_{i=1}^m$, and denote the $k$-th standard basis vector of $\C^{n_i}$ by $e_k$. Define a section $\xi_{i,k}$ by $\xi_{i,k}(x)\coloneqq h_i(x,\gamma_i(x) e_k)$.

\begin{lemma}\label{lem:frame vector bundle}
    The collection of all $\xi_{i,k}$ forms a finite frame.
\end{lemma}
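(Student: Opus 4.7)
The plan is to verify the frame identity $\eta = \sum_{i,k}\theta_{\xi_{i,k},\xi_{i,k}}\eta$ pointwise: since both sides are elements of $\Gamma(\V)$, it suffices to check the equality at each $x\in X$. The computation is local on each chart $U_i$ and uses only three ingredients: the extension-by-zero of $\xi_{i,k}$, the metric-preserving condition \eqref{eq:metric preserving}, and the partition-of-unity identity satisfied by the $\gamma_i$.

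First I would check that each $\xi_{i,k}$, defined a priori only on $U_i$, extends to a continuous section on all of $X$. This holds because $\supp\gamma_i\subset U_i$, so setting $\xi_{i,k}(x)$ to be the zero vector in $p^{-1}(x)$ for $x\notin U_i$ gives a continuous global section. Next, fix $\eta\in\Gamma(\V)$ and $x\in X$, and let $I_x=\{i:x\in U_i\}$. For each $i\in I_x$, the chart $h_i$ lets me write $\eta(y)=h_i(y,v_i(y))$ for $y$ in a neighborhood of $x$ in $U_i$, where $v_i:U_i\to\C^{n_i}$ is a continuous function. The metric-preserving condition (\ref{eq:metric preserving}) then yields
\[
\langle\xi_{i,k},\eta\rangle(x)=\beta\bigl(h_i(x,\gamma_i(x)e_k),h_i(x,v_i(x))\bigr)=\gamma_i(x)\,(e_k\,|\,v_i(x)),
\]
i.e.\ $\gamma_i(x)$ times the $k$-th coordinate of $v_i(x)$.

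Multiplying on the left by $\xi_{i,k}(x)$ and using that $h_i$ is fiberwise linear, the summand becomes
\[
(\theta_{\xi_{i,k},\xi_{i,k}}\eta)(x)=h_i\bigl(x,\gamma_i(x)^2(v_i(x))_k\,e_k\bigr),
\]
and summing over $k$ reconstructs $v_i(x)=\sum_k(v_i(x))_k e_k$, giving $\gamma_i(x)^2\eta(x)$. A final summation over $i\in I_x$ then yields $\bigl(\sum_i\gamma_i(x)^2\bigr)\eta(x)=\eta(x)$, provided $\sum_i\gamma_i(x)^2=1$.

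The only real obstacle is reconciling the normalization of the partition of unity: the natural output of the computation is a sum of squares, whereas a standard partition of unity satisfies $\sum_i\gamma_i=1$. This is resolved by choosing $\gamma_i=\sqrt{\rho_i}$ for a standard partition $\{\rho_i\}$ subordinate to $\{U_i\}$ (or equivalently, working with a partition of unity normalized to satisfy $\sum_i\gamma_i^2=1$), which exists because $X$ is paracompact. With this normalization fixed, everything else in the argument is a direct unpacking of the metric-preserving trivializations.
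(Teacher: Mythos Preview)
The paper states this lemma without proof, so there is no argument to compare against directly. Your approach is the standard one and is correct in its essentials: trivialize locally via the metric-preserving charts, compute $\langle\xi_{i,k},\eta\rangle$ using \eqref{eq:metric preserving}, sum over $k$ to recover $\gamma_i(x)^2\,\eta(x)$ on each chart, and then sum over $i$.

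You are also right to flag the normalization issue, and this is the one genuine subtlety. With the usual convention $\sum_i\gamma_i=1$ for a partition of unity (which is how the paper phrases it), your computation produces $\bigl(\sum_i\gamma_i(x)^2\bigr)\eta(x)$, and $\sum_i\gamma_i^2<1$ wherever two or more $\gamma_i$ overlap, so the frame identity fails as literally stated. Your fix---take $\gamma_i=\sqrt{\rho_i}$ for an ordinary partition of unity $\{\rho_i\}$, so that $\sum_i\gamma_i^2=1$---is exactly what is needed, and is surely the intended reading. With that normalization in place your argument is complete.
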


\subsection{\texorpdfstring{$C^*$}{C*}-correspondences and Hilbert bimodules}\label{sec:hilbert bimodules}
A \emph{$C^*$-correspondence} over a $C^*$-algebra $A$ is a right Hilbert $A$-module $\E$ together with a $\ast$-homomorphism $\varphi_\E$ from $A$ to $\calL(\E)$.

We call $\varphi_\E$ the \emph{structure map}. A $C^*$-correspondence $\E$ is called \emph{non-degenerate} if \\$\overline{\Span_A}(\varphi_\E(A)\E)$ is equal to $\E$. Here $\overline{\Span_A}(S)$ denotes the closure of the $A$-linear span of a set $S \subset \E$. The basics about $C^*$-correspondences can be found in \cite{Katsura:2004}.

\begin{example}\label{ex:homomorphisms as correspondences}
    Let $A$ be a unital $C^*$-algebra and $\phi:A\to A$ a $\ast$-homomorphism. Equip $A$ with algebra multiplication as right multiplication and the inner product $\langle a,b\rangle_A=a^*b$. Then $A$ is a right Hilbert $A$-module and $\mathcal{K}(A)$ is isomorphic to $A$. The $\ast$-homomorphism $\phi$ induces a structure map $\varphi_A:A\to\mathcal{K}(A)$ given by $\varphi_A(a)b=\phi(a)b$. This yields a $C^*$-correspondence over $A$. If $\phi=\id_A$ then we denote the resulting $C^*$-correspondence by $\mathbf{A}$. 
\end{example}
Let $\E$ and $\F$ be $C^*$-correspondences over $A$. We denote the internal tensor product of $\E$ and $\F$ by $\E\otimes_A\F$. It is a $C^*$-correspondence over $A$ with  inner product \[\langle\xi_1\otimes\eta_1,\xi_2\otimes\eta_2\rangle_{\E\otimes\F}\coloneqq \langle\eta_1,\varphi_\F(\langle\xi_1,\xi_2\rangle_\E)\eta_2\rangle_\F.\] The $n$-fold tensor product of $\E$ with itself is denoted by $\E^{\otimes n}$.

If $J$ is an ideal in a $C^*$-algebra $A$, write $J^\perp$ for the orthogonal complement $\{a\in A:aJ=Ja=\{0\}\}$.
\begin{definition}[{\cite[Definition 3.2]{Katsura:2004}}]\label{def:Katsuras ideal}
    Let $\E$ be a $C^*$-correspondence from $A$ to $B$ with structure map $\varphi_\E$. We define the ideal \[J_\E\coloneqq\varphi_\E^{-1}(\K(\E))\cap(\ker\varphi_\E)^\perp\unlhd A,\] and call it \emph{Katsura's ideal}. 
\end{definition}
We now turn to Hilbert bimodules, which are at the same time right and left Hilbert modules with compatible multiplications and inner products. 
\begin{definition}\label{def:bimodule}
    Let $A$ be a $C^*$-algebra. Let $\E$ be a right Hilbert $A$-module with inner product $\langle\cdot,\cdot\rangle_\E$. If $\E$ is also a left Hilbert $A$-module with inner product ${}_\E\langle\cdot,\cdot\rangle$ satisfying
    \[a(\xi b)=(a\xi)b\quad\text{and}\quad{}_\E\langle\xi,\eta\rangle\zeta=\xi\langle\eta,\zeta\rangle_\E\]
     for all $\xi,\eta,\zeta\in\E$ and $a,b\in A$, then we call $\E$ a \emph{Hilbert $A$-bimodule}.
\end{definition}
Every Hilbert $A$-bimodule is a $C^*$-correspondence over $A$. To see this, define the structure map $\varphi_\E$ to act by left multiplication, $\varphi_\E(a)\xi=a\xi$. It is shown in \cite{Brown:1994} that $\varphi_\E$ indeed maps into the adjointable operators on $\E$. If we restrict $\varphi_\E$ to Katsura's ideal $J_\E$ then it becomes a $\ast$-isomorphism onto $\K(\E)$. We can also go the opposite direction: Any $C^*$-correspondence such that the structure map restricts to an isomorphism from Katsura's ideal to the compacts can be turned into a Hilbert $A$-bimodule. Left multiplication is given by $a\xi\coloneqq \varphi_\E(a)\xi$, and the inner product by ${}_\E\langle\xi,\eta\rangle\coloneqq\varphi_\E^{-1}(\theta_{\xi,\eta})$.
\begin{example}
Let $X$ be a compact Hausdorff space, $\V$ a vector bundle over $X$, and $\alpha:X\to X$ a homeomorphism. We can turn the right Hilbert $C(X)$-module $\Gamma(\V)$ into a $C^*$-correspondence by defining the left action $\varphi(f)\xi=\xi(f\circ\alpha)$ for $\xi\in\Gamma(\V)$ and $f\in C(X)$. This $C^*$-correspondence, which we denote by $\Gamma(\V,\alpha)$, is studied in \cite{adamo2023}. It is shown there that $\Gamma(\V,\alpha)$ is a Hilbert bimodule if and only if $\V$ is a line bundle. The main motivation for this work was to generalize the results of \cite{adamo2023} by taking a partial automorphism $\theta$ on $X$ instead of a homeomorphism $\alpha$.
\end{example}
\subsection{Morphisms and representations of \texorpdfstring{$C^*$}{C*}-correspondences}
Throughout this section $\E$ and $\F$ will denote $C^*$-correspondences over $C^*$-algebras $A$ and $B$, respectively. 
\begin{definition}\label{def:morphism Cstar corr}
    A \emph{morphism of $C^*$-correspondences} from $\E$ to $\F$ is a morphism of Hilbert modules $(\Pi : A \to B , \, T : \E \to \F)$, see Definition \ref{def:morphism of Hilbert modules}, such that \begin{equation}\label{eq:morphism left action}\varphi_\F(\Pi(a))T(\xi)=T(\varphi_\E(a)\xi)\quad\text{for all } a\in A,\xi\in\E.\end{equation}

Given a morphism $(\Pi,T)$ of $C^*$-correspondences from $\E$ to $\F$, the map $\psi_T:\K(\E)\to\K(\F)$ defined by $\psi_T(\theta_{\xi,\eta})=\theta_{T(\xi),T(\eta)}$, for $\xi,\eta\in\E$, is a $^*$-homomorphism. 
\end{definition}
\begin{definition}
A morphism $(\Pi,T)$ of $C^*$-correspondences from $\E$ to $\F$ is called \emph{covariant} if $\Pi(a)\in J_\F$ and $\varphi_\F(\Pi(a))=\psi_T(\varphi_\E(a))$ for all $a\in J_\E$. 
\end{definition}
\begin{lemma}\label{lem:bijective implies covariant} 

    Let $(\Pi,T)$ be a morphism of $C^*$-correspondences from $\E$ to $\F$. If $\Pi$ is surjective and $T$ is bijective, then $(\Pi,T)$ is covariant.
    \end{lemma}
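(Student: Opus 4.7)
The plan is to establish two things for every $a\in J_\E$: that $\varphi_\F(\Pi(a))=\psi_T(\varphi_\E(a))$ and that $\Pi(a)$ lies in $J_\F=\varphi_\F^{-1}(\K(\F))\cap(\ker\varphi_\F)^\perp$. The first identity is the more structural piece; the second orthogonality condition is where the hypotheses on surjectivity of $\Pi$ and injectivity of $T$ get used.

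First I would record the intertwining relation
\[\psi_T(k)T(\xi)=T(k\xi)\qquad\text{for all }k\in\K(\E),\;\xi\in\E.\]
On a rank-one operator $k=\theta_{\xi_0,\eta_0}$ both sides equal $T(\xi_0)\Pi(\langle\eta_0,\xi\rangle_\E)$ by Definition~\ref{def:morphism of Hilbert modules}(1), and the general case follows by linearity and continuity since $\psi_T$ is a $*$-homomorphism and $T$ is bounded. Now let $a\in J_\E$. Then $\varphi_\E(a)\in\K(\E)$, so combining the morphism property \eqref{eq:morphism left action} with the intertwining identity yields
\[\varphi_\F(\Pi(a))T(\xi)=T(\varphi_\E(a)\xi)=\psi_T(\varphi_\E(a))T(\xi)\qquad(\xi\in\E).\]
Since $T$ is surjective, the two operators agree on all of $\F$, so $\varphi_\F(\Pi(a))=\psi_T(\varphi_\E(a))$; in particular $\varphi_\F(\Pi(a))\in\K(\F)$ and therefore $\Pi(a)\in\varphi_\F^{-1}(\K(\F))$.

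For the orthogonality condition, pick any $b\in\ker\varphi_\F$. By surjectivity of $\Pi$ there is $c\in A$ with $\Pi(c)=b$, so for every $\xi\in\E$,
\[T(\varphi_\E(c)\xi)=\varphi_\F(\Pi(c))T(\xi)=0.\]
Injectivity of $T$ then forces $\varphi_\E(c)\xi=0$ for all $\xi$, i.e.\ $c\in\ker\varphi_\E$. Because $a\in J_\E\subseteq(\ker\varphi_\E)^\perp$, this gives $ac=ca=0$, hence $\Pi(a)b=b\Pi(a)=\Pi(ac)=\Pi(ca)=0$. Since $b\in\ker\varphi_\F$ was arbitrary, $\Pi(a)\in(\ker\varphi_\F)^\perp$, completing the proof that $\Pi(a)\in J_\F$.

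I do not anticipate a serious obstacle here; the argument is essentially unpacking the definitions once one has the intertwining identity for $\psi_T$. The only point where some care is needed is to recognise that bijectivity of $T$ is used in two distinct ways: surjectivity to upgrade the pointwise equality into an operator identity (and hence extract membership in $\K(\F)$), and injectivity (combined with surjectivity of $\Pi$) to transport $\ker\varphi_\F$ back into $\ker\varphi_\E$ in order to exploit the second factor in the definition of $J_\E$.
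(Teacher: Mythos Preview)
Your proof is correct and follows essentially the same route as the paper for the orthogonality condition $\Pi(a)\in(\ker\varphi_\F)^\perp$: both arguments use surjectivity of $\Pi$ to write an arbitrary element of $\ker\varphi_\F$ as $\Pi(c)$, then injectivity of $T$ together with \eqref{eq:morphism left action} to force $c\in\ker\varphi_\E$. The one methodological difference is that the paper invokes \cite[Proposition 2.4]{Robertson2011}, which says that $\Pi(J_\E)\subset J_\F$ together with dense range of $T$ already implies covariance, and so only verifies the inclusion $\Pi(J_\E)\subset J_\F$; you instead establish the covariance identity $\varphi_\F(\Pi(a))=\psi_T(\varphi_\E(a))$ directly via the intertwining relation $\psi_T(k)T(\xi)=T(k\xi)$ and surjectivity of $T$. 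Your argument is therefore slightly more self-contained, but otherwise the two proofs coincide.
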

\begin{proof}
    By \cite[Proposition 2.4]{Robertson2011} the morphism $(\Pi,T)$ is covariant if $\Pi(J_\E)\subset J_\F$ and $T$ has dense range. 
   We will now show that $\Pi$ being surjective and $T$ being injective implies that $\Pi((\ker\varphi_\E)^\perp)$ is contained in $(\ker\varphi_\F)^\perp$. As an intermediate step we prove that $\ker\varphi_\F$ is contained in $\Pi(\ker\varphi_\E)$. Since $\Pi$ is surjective, any element of $B$ can be written as $\Pi(a)$ for some $a\in A$. If $\Pi(a)$ lies in $\ker\varphi_\F$ then we obtain \[T(\varphi_\E(a)\xi)=\varphi_\F(\Pi(a))T(\xi)=0\quad\text{for all }\xi\in\E,\]
    so if $T$ is injective then this implies $\varphi_\E(a)=0$ and hence $a\in\ker\varphi_\E$.
    
    Note that $a\in(\ker\varphi_\E)^\perp$ implies $\Pi(a)\Pi(b)=0$ for all $b\in\ker\varphi_\E$. Since $\ker\varphi_\F$ is contained in $\Pi(\ker\varphi_\E)$ this shows that $\Pi(a)c=0$ for all $c\in\ker\varphi_\F$, that is, $\Pi(a)\in(\ker\varphi_\F)^\perp$.

    We have left to show that $\Pi(\varphi_\E^{-1}(\K(\E)))$ is contained in $\varphi_\F^{-1}(\K(\F))$. But this is clear from surjectivity of $T$ together with Equation \ref{eq:morphism left action}. It now follows from Definition \ref{def:Katsuras ideal} that $\Pi(J_\E)$ is contained in $J_\F$.
    \end{proof}
\begin{definition}\label{def:subcorrespondence}
Let $\F$ be a $C^*$-correspondence over a $C^*$-algebra $B$. A \emph{$C^*$-subcorrespondence of $\F$} is a $C^*$-correspondence $\E$ over a $C^*$-algebra $A$ together with an injective covariant morphism from $\E$ to $\F$.
\end{definition}
If the morphism is understood we will not denote it explicitly, and speak of $\E$ as a $C^*$-subcorrespondence of $\F$. 

In the following definition we will use the notation of Example \ref{ex:homomorphisms as correspondences}. 
\begin{definition}\label{def: covariant representation}
    Let $\E$ be a $C^*$-correspondence over a $C^*$-algebra $A$. A \emph{covariant representation} of $\E$ on a $C^*$-algebra $B$ is a covariant morphism of $C^*$-correspondences from $\E$ to $\mathbf{B}$.
\end{definition}
Explicitly, a representation of a $C^*$-correspondence $\E$ on a $C^*$-algebra $B$ is given by a pair $(\pi,t)$. The map $\pi$ is a $\ast$-homomorphism from $A$ to $B$, and $t$ is a linear map from $\E$ to $B$ such that $\pi(\langle\xi,\eta\rangle_\E)=t(\xi)^*t(\eta)$ and $\pi(a)t(\xi)=t(\varphi_\E(a)\xi)$ for every $\xi,\eta\in\E$ and every $a\in A$. This automatically implies that $t(\xi)\pi(a)=t(\xi a)$ holds. 

The $\ast$-homomorphism $\psi_t$ from Definition \ref{def:morphism Cstar corr} now goes into the $C^*$-algebra $B$, and on rank-one operators is given by $\psi_t(\theta_{\xi,\eta})=t(\xi)t(\eta)^*$.

\subsection{Cuntz--Pimsner algebras}
We will now define the Cuntz--Pimsner algebra $\CP(\E)$ associated to a $C^*$-correspondence $\E$. Cuntz--Pimsner algebras were first introduced by Pimsner in \cite{Pimsner:1997} and then defined by Katsura in \cite{Katsura:2004} in the most general setting. 

Let $\E$ be a $C^*$-correspondence over a $C^*$-algebra $A$. If $(\pi,t)$ is a covariant representation of $\E$ on a $C^*$-algebra $B$, then $C^*(\pi,t)$ denotes the $C^*$-subalgebra of $B$ generated by the images of $\pi$ and $t$. 

The covariant representation $(\pi,t)$ is called \emph{universal} if for any other covariant representation $(\Tilde{\pi},\Tilde{t})$ there exists a surjective $\ast$-homomorphism $\rho$ from $C^*(\pi,t)$ onto $C^*(\Tilde{\pi},\Tilde{t})$ such that $\Tilde{\pi}=\rho\circ\pi$ and $\Tilde{t}=\rho\circ t$. We denote the universal covariant representation of $\E$ by $(\pi_\E,t_\E)$. The $C^*$-algebra $C^*(\pi_\E,t_\E)$ is called the \emph{Cuntz--Pimsner algebra} and is denoted by $\CP(\E)$.
 
By definition the universal covariant representation is unique if it exists. To show that it does exist one has to construct it explicitly, which also shows that the Cuntz--Pimsner algebra of $\E$ exists and is unique. This is done in \cite{Katsura:2004}. 

A representation $(\pi,t)$ of $\E$ is said to \emph{admit a gauge action} if for every $z\in\mathbb{T}$ there exists a $\ast$-homomorphism $\beta_z$ on $C^*(\pi,t)$ such that $\beta_z(t(\xi))=zt(\xi)$ and $\beta_z(\pi(a))=\pi(a)$ for all $\xi\in\E$ and $a\in A$.

The universal covariant representation admits a gauge action which we denote by $\gamma$. The gauge-invariant uniqueness theorem \cite[Theorem 6.4]{Katsura:2004}, which is fundamental for the theory of Cuntz--Pimsner algebras, says that for a covariant representation $(\pi,t)$ of a $C^*$-correspondence $\E$, the $\ast$-homomorphism $\rho:\CP(\E)\to C^*(\pi,t)$ is an isomorphism if and only if $(\pi,t)$ is injective and admits a gauge action. Here we say that $(\pi,t)$ is injective if $\pi$ is injective, which automatically implies injectivity of $t$. The gauge-invariant uniqueness theorem allows one to show that if $\E$ is a $C^*$-subcorrespondence of $\F$, then $\CP(\E)$ is a subalgebra of $\CP(\F)$, see \cite[Remark 3.7]{Katsura2007} and \cite[Lemma 2.6]{Katsura:2004}.

\begin{lemma}\label{lem:inductive limit correspondences}
    Let $\E$ and $\E_n$ for every $n\in\N$ be $C^*$-correspondences over $C^*$-algebras $A$ and $A_n$, respectively. Let $(\Pi_n,T_n):\E_n\to \E_{n+1}$ and $(\Pi^{(n)},T^{(n)}):\E_n\to\E$ be injective covariant morphisms such that $(\Pi^{(n)},T^{(n)})=(\Pi^{(n+1)}\circ\Pi_n,T^{(n+1)}\circ T_n)$ for all $n\in\N$. Additionally assume
    \begin{align*}
        A=\overline{\bigcup_{n\in\N}\Pi^{(n)}(A_n)}\qquad\text{and}\qquad  \E=\overline{\bigcup_{n\in\N}T^{(n)}(\E_n)}. 
    \end{align*}
    Then $\CP(\E)=\lim\limits_{\to}\CP(\E_n)$.
\end{lemma}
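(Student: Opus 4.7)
The plan is to construct compatible injective $*$-homomorphisms $\rho_n \colon \CP(\E_n) \to \CP(\E)$ and then show that the union of their images is dense in $\CP(\E)$. This is exactly what is needed to identify $\CP(\E)$ with the inductive limit $\lim_{\to}\CP(\E_n)$, where the connecting maps are the homomorphisms $\CP(\E_n) \to \CP(\E_{n+1})$ induced by $(\Pi_n,T_n)$.

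First I would observe that the universal covariant representation $(\pi_\E, t_\E)$ of $\E$ can be composed with each morphism $(\Pi^{(n)}, T^{(n)})$ to produce a pair $(\pi_\E \circ \Pi^{(n)}, t_\E \circ T^{(n)})$, and I would check that this pair is a covariant representation of $\E_n$ on $\CP(\E)$. The covariance condition for $(\pi_\E \circ \Pi^{(n)}, t_\E \circ T^{(n)})$ is inherited from covariance of $(\Pi^{(n)}, T^{(n)})$ together with covariance of the universal representation: given $a \in J_{\E_n}$, we have $\Pi^{(n)}(a) \in J_\E$, and the equality $\pi_\E(\Pi^{(n)}(a)) = \psi_{t_\E}(\psi_{T^{(n)}}(\varphi_{\E_n}(a)))$ follows from unwinding definitions. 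By universality of $\CP(\E_n)$ we therefore obtain a $*$-homomorphism $\rho_n \colon \CP(\E_n) \to \CP(\E)$.

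Next I would verify injectivity of $\rho_n$ using the gauge-invariant uniqueness theorem. Since the universal representation of $\E$ is injective and $\Pi^{(n)}$ is injective by hypothesis, the $*$-homomorphism $\pi_\E \circ \Pi^{(n)}$ is injective. The gauge action $\gamma$ on $\CP(\E)$ restricts to a gauge action on the $C^*$-subalgebra $C^*(\pi_\E\circ\Pi^{(n)}, t_\E\circ T^{(n)})$: indeed $\gamma_z(\pi_\E(\Pi^{(n)}(a))) = \pi_\E(\Pi^{(n)}(a))$ and $\gamma_z(t_\E(T^{(n)}(\xi))) = z\, t_\E(T^{(n)}(\xi))$. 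Hence by \cite[Theorem 6.4]{Katsura:2004}, $\rho_n$ is an isomorphism onto its image. Compatibility $\rho_{n+1} \circ \CP(\Pi_n, T_n) = \rho_n$ follows from the factorization $(\Pi^{(n)}, T^{(n)}) = (\Pi^{(n+1)}\circ\Pi_n, T^{(n+1)}\circ T_n)$ together with uniqueness of the universal map.

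Finally I would argue that $\bigcup_n \rho_n(\CP(\E_n))$ is dense in $\CP(\E)$. The algebra $\CP(\E)$ is generated as a $C^*$-algebra by $\pi_\E(A) \cup t_\E(\E)$, and the density hypotheses
\begin{equation*}
A = \overline{\bigcup_n \Pi^{(n)}(A_n)} \qquad \text{and} \qquad \E = \overline{\bigcup_n T^{(n)}(\E_n)}
\end{equation*}
combined with continuity of $\pi_\E$ and $t_\E$ show that the generators lie in the closed union of the images $\rho_n(\CP(\E_n))$. Since each $\rho_n(\CP(\E_n))$ is a $C^*$-subalgebra and the sequence is increasing, the closed union is a $C^*$-subalgebra containing all generators, hence equals $\CP(\E)$. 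The main technical point to keep track of is the compatibility of the gauge actions required to apply gauge-invariant uniqueness; every other step is essentially a formal consequence of universality.
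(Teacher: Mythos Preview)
Your proof is correct and follows the same strategy as the paper's: obtain compatible injective $*$-homomorphisms $\CP(\E_n)\to\CP(\E)$ and then argue density of the union of their images using the hypotheses on $A$ and $\E$. The only difference is packaging: the paper invokes \cite[Proposition 2.11]{Robertson2011} to say that $\CP(-)$ is a covariant functor preserving monomorphisms, thereby obtaining the injective maps $\varphi^{(n)}\colon\CP(\E_n)\to\CP(\E)$ and the connecting maps in one stroke, whereas you unpack this functoriality by hand via universality and the gauge-invariant uniqueness theorem.
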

\begin{proof}
    Since assigning its Cuntz--Pimsner algebra to a $C^*$-correspondence gives a covariant functor which preserves monomorphisms (see \cite[Proposition 2.11]{Robertson2011}), we obtain injective \mbox{*-homomorphisms} $\varphi_n:\CP(\E_n)\to\CP(\E_{n+1})$ and $\varphi^{(n)}:\CP(\E_n)\to\CP(\E)$ such that $\varphi^{(n)}=\varphi^{(n+1)}\circ\varphi_n$ for all $n\in\N$. By the universal property of the inductive limit, this yields a *-homomorphism $\psi:\lim\limits_{\to}\CP(\E_n)\to\CP(\E)$, which is injective because all $\varphi^{(n)}$ are injective. To show that it is surjective, we need to show
    \begin{align}\label{eq:inductive limit}
        \CP(\E)=\overline{\bigcup_{n\in\N}\varphi^{(n)}(\CP(\E_n))}.
    \end{align}
    By assumption, we can approximate elements of $A$ by elements of $\bigcup_{n\in\N}\Pi^{(n)}(A_n)$, and elements of $\E$ by elements of $\bigcup_{n\in\N}T^{(n)}(\E_n)$. If we regard elements of $A$, $\E$ and the $A_n$'s and $\E_n$'s as elements of $\CP(\E)$ in the canonical way, this means that we can approximate elements of $A$ by elements of  $\bigcup_{n\in\N}A_n\subset \bigcup_{n\in\N}\varphi^{(n)}(\CP(\E_n))$ in the Cuntz--Pimsner algebra, and elements of $\E$ by elements of $\bigcup_{n\in\N}\E_n\subset \bigcup_{n\in\N}\varphi^{(n)}(\CP(\E_n))$. Considering that multiplication and addition in $\CP(\E)$ are continuous, and that the *-subalgebra generated by $\E$ and $A$ is dense in $\CP(\E)$, this shows (\ref{eq:inductive limit}). 
\end{proof}     

\section{Cuntz--Pimsner algebras of partial automorphisms twisted by vector bundles}\label{sect:cuntz pimsner algebras from twisted partial autos}
In this section we build a $C^*$-correspondence starting from a partial automorphism twisted by a vector bundle, by which we mean a partial action of the integers on the base space of a vector bundle. The construction is almost identical to the one in \cite{adamo2023}. We investigate the properties of the associated Cuntz--Pimsner algebras. 

 \subsection{Hilbert \texorpdfstring{$C_0(X)$}{C0(X)}-modules}\label{sec:Hilbert Czero modules}
The first step towards constructing a $C^*$-correspondence out of a partial $\Z$-action and a vector bundle is to associate a Hilbert module to the vector bundle. In Section \ref{sect:Hilbert C(X)-modules} we have considered Hilbert $C(X)$-modules for a compact Hausdorff space $X$. We now deal with locally compact spaces as well. More precisely, we will construct Hilbert $C_0(X)$-modules from vector bundles over locally compact second countable Hausdorff spaces. We will then study how they relate to general Hilbert $C_0(X)$-modules. This type of module has, for example,  been considered in \cite{robertTikuisis:2011}. The main result of the section is a Serre--Swan type theorem for a subclass of Hilbert $C_0(X)$-modules which we call \emph{$\sigma$-finitely generated} and \emph{$\sigma$-projective}. Another generalization of the Serre--Swan theorem, which goes into a different direction by considering Hilbert bundles instead of vector bundles, can be found in \cite{Takahashi:1979}.

 Let $\V=[E,p,X]$ be a vector bundle on the locally compact Hausdorff space $X$. According to Theorem \ref{thm:metric uniqueness} there exists a metric $\beta$ on $\V$ that is unique up to isometry. We denote by $\Gamma_0(\V)$ the space of continuous sections of $\V$ vanishing at infinity with respect to $\beta$. 
 \begin{proposition}
     The space $\Gamma_0(\V)$ with right multiplication $(\xi f)(x)=\xi(x)f(x)$ by continuous functions vanishing at infinity and the inner product  $\langle\xi,\eta\rangle(x)=\beta(\xi(x),\eta(x))$ is a right Hilbert $C_0(X)$-module. 
 \end{proposition}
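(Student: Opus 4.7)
The plan is to verify the axioms of a right Hilbert $C_0(X)$-module directly, treating completeness as the only substantive step.

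First, I would check that $\langle \xi, \eta \rangle$ as defined actually lies in $C_0(X)$. Continuity of $\langle \xi, \eta \rangle$ is immediate from continuity of $\beta$ and of the sections. For vanishing at infinity, the Cauchy--Schwarz inequality in each fiber gives
\[
|\beta(\xi(x), \eta(x))| \leq \sqrt{\beta(\xi(x),\xi(x))}\,\sqrt{\beta(\eta(x),\eta(x))},
\]
and since both factors on the right vanish at infinity by assumption, so does the product. A similar argument---using boundedness of $f$---shows that $\xi f$ is still in $\Gamma_0(\V)$, so the right $C_0(X)$-action is well defined.

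The algebraic axioms (sesquilinearity, conjugate-symmetry, $\langle \xi,\xi\rangle \geq 0$, $\langle \xi, \eta f\rangle = \langle \xi, \eta\rangle f$, and positive-definiteness) all follow pointwise from the corresponding properties of the fiberwise inner products $\beta|_{\V_x \times \V_x}$.

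The main task is completeness with respect to the norm $\|\xi\|^2 = \sup_{x \in X} \beta(\xi(x), \xi(x))$. Given a Cauchy sequence $(\xi_n)$ in $\Gamma_0(\V)$, for each $x \in X$ the sequence $(\xi_n(x))$ is Cauchy in the finite-dimensional Hilbert space $(\V_x, \beta_x)$ and thus converges to some element $\xi(x) \in \V_x$. The uniform Cauchy property yields $\sup_x \sqrt{\beta(\xi_n(x) - \xi(x), \xi_n(x) - \xi(x))} \to 0$. To see that the resulting map $\xi : X \to E$ is continuous, I would work in local trivializations: fix a metric-preserving atlas $\{(U_i, h_i)\}$ (which exists by Theorem \ref{thm:metric uniqueness}, or rather by the atlas-level results used in its proof), write $\xi_n|_{U_i}(x) = h_i(x, v_n^{(i)}(x))$ for continuous $v_n^{(i)} : U_i \to \C^{N_i}$, and observe that the metric-preserving property transfers uniform convergence on the bundle to uniform convergence of the $v_n^{(i)}$ to continuous limit functions $v^{(i)}$; then $\xi|_{U_i} = h_i(\cdot, v^{(i)}(\cdot))$ is continuous. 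Vanishing at infinity follows from a routine triangle inequality: given $\varepsilon > 0$, pick $N$ with $\|\xi - \xi_N\| < \varepsilon/2$ and then a compact $K$ outside which $\|\xi_N(x)\| < \varepsilon/2$.

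The only genuine subtlety is transferring the uniform convergence from sections of $\V$ to uniform convergence of their coordinate representations, which requires a \emph{metric-preserving} atlas rather than an arbitrary one. This is the step where Theorem \ref{thm:metric uniqueness} (together with the underlying result from Husemoller) is essential; once one has unitary coordinate comparisons, the remainder of the argument mirrors the classical proof that $C_0(X)$-valued continuous sections of a metrized vector bundle form a Banach space.
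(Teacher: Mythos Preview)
Your proposal is correct. The treatment of well-definedness of the inner product and the algebraic axioms is essentially identical to the paper's. The completeness argument, however, is organised differently.

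The paper proves completeness by restricting a Cauchy sequence $(\xi_n)$ to arbitrary compact subsets $K\subset X$: since $\Gamma(\V|_K)$ is a finitely generated projective $C(K)$-module (Serre--Swan), it is automatically complete, so the restricted sequence has a limit $\xi^K$; these limits are then patched into a global section $\xi$, and a standard $\varepsilon/2$ argument shows $\xi\in\Gamma_0(\V)$ and $\xi_n\to\xi$ in norm. You instead construct the limit pointwise in each fiber and verify continuity by passing to a metric-preserving atlas, where uniform convergence of sections becomes uniform convergence of their $\C^{N_i}$-coordinate functions. Your approach is more self-contained, avoiding any appeal to completeness in the compact case; the paper's approach is more modular, reusing the algebraic fact (recorded earlier in Section~\ref{sect:Hilbert modules}) that algebraically finitely generated projective modules are automatically complete. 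Your observation that a \emph{metric-preserving} atlas is what makes the coordinate transfer of uniform convergence clean is a nice point and indeed the right tool for your route.
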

\begin{proof}
    The space $\Gamma_0(\V)$ is invariant under the right multiplication given above. That $\langle\cdot,\cdot\rangle$ defined as above is $C_0(X)$-valued follows because if both $x\mapsto\beta(\xi(x),\xi(x))$ and $x\mapsto\beta(\eta(x),\eta(x))$ vanish at infinity, then $x\mapsto\beta(\xi(x),\eta(x))$ does as well. This can be seen by applying the Cauchy--Schwartz inequality fiberwise. The axioms of the inner product can be verified directly using that $\beta$ defines an inner product on each fiber. Hence $\Gamma_0(\V)$ equipped with $\langle\cdot,\cdot\rangle$ is an inner product $C_0(X)$-module. 

    It is left to show that $\Gamma_0(\V)$ is complete with respect to $\langle\cdot,\cdot\rangle$. Let $(\xi_n)$ be a sequence in $\Gamma_0(\V)$ which is Cauchy with respect to $\langle\cdot,\cdot\rangle$. Let $K$ be a compact subset of $X$. Write $\V_K$ for the restriction of $\V$ to $K$. There exists a surjective and contractive morphism from $\Gamma_0(\V)$ to $\Gamma(\V|_K)$ given by restriction, which we call $\Phi_K$. Then $(\Phi_K(\xi_n))_n$ is a Cauchy sequence in $\Gamma(\V_K)$, which has a limit $\xi^{K}$. If $K$ and $L$ are two compact subsets of $X$, then $\xi^K$ and $\xi^L$ agree on the intersection $K\cap L$. Hence we can define a continuous section $\xi$ such that the restriction of $\xi$ to $K$ yields $\xi^{K}$. 
    
    The sequence $(\xi_n)$ converges to $\xi$ uniformly on compact sets. We want to show that it converges in the norm induced by $\langle\cdot,\cdot\rangle$. Take $\varepsilon>0$. There exists $n_0\in\N$ such that $\norm{\xi_n-\xi_m}\leq\varepsilon/4$ for all $m,n>n_0$, and there exists a compact set $K$ such that $\beta(\xi_{n_0}(x),\xi_{n_0}(x))^{1/2}\leq\varepsilon/4$ for all $x\in X\backslash K$. It follows that $\beta(\xi_{n}(x),\xi_{n}(x))^{1/2}\leq\varepsilon/2$ for all $x\in X\backslash K$ and $n>n_0$. The same holds for $\xi$ instead of $\xi_n$, since $(\xi_n)$ converges to $\xi$ pointwise. In particular $\xi$ lies in $\Gamma_0(\V)$.
    
    Since $(\Phi_{K}(\xi_k))_k$ converges to $\xi^{K}$, there exists $k_0>n_0$ such that $\norm{\Phi_{K}(\xi_k)-\xi^{K}}\leq\varepsilon$ for all $k>k_0$. Put into words, this means that for all $k>k_0$ the restriction of $\xi_k$ to $K$ is $\varepsilon$-close to the restriction of $\xi$ to $K$. Since outside of $K$ both $\xi_k$ and $\xi$ are bounded by $\varepsilon/2$, this shows $\norm{\xi_{k}-\xi}\leq\varepsilon$. Therefore $(\xi_n)$ converges to $\xi\in\Gamma_0(\V)$.    
\end{proof}
\begin{remark}\label{rem:sigma fp module is full}
    Let $\V$ be a vector bundle over $X$. Recall that we assume the bundle projection to be surjective, and each fiber to be at least one-dimensional. Using local trivialisations, one can show that each point of $X$ has an open neighborhood $W$ such that $C_0(W)$ is contained in $\langle\Gamma_0(\V),\Gamma_0(\V)\rangle$. A partition-of-unity argument implies that $\langle\Gamma_0(\V),\Gamma_0(\V)\rangle$ is equal to $C_0(X)$, In other words, $\Gamma_0(\V)$ is full. 
\end{remark}
 Recall from Section \ref{sect:Hilbert modules} that if $\E$ is a right Hilbert $A$-module and $I$ is an ideal in $A$, then we write $\E_I$ for the quotient Hilbert $A/I$-module $\E/\E I$.
\begin{definition}
    A right Hilbert $C_0(X)$-module $\E$ is called \emph{$\sigma$-finitely generated} if for any compact subset $K$ of $X$, the Hilbert $C(K)$-module $\E_{C_0(X\backslash K)}$ is finitely generated. It is called \emph{$\sigma$-projective} if $\E_{C_0(X\backslash K)}$ is projective. 
\end{definition}

The following theorem is a version of the Serre--Swan theorem for $\sigma$-finitely generated and $\sigma$-projective Hilbert $C_0(X)$-modules.
\begin{theorem}\label{thm:serre swan locally compact}
    Let $X$ be a locally compact Hausdorff space. For any vector bundle $\V$ over $X$ and any metric $\beta$ on $\V$, the right Hilbert $C_0(X)$-module $\Gamma_0(\V)$ is $\sigma$-finitely generated and $\sigma$-projective. Conversely, every full $\sigma$-finitely generated and $\sigma$-projective Hilbert $C_0(X)$-module is isomorphic to a module of the form $\Gamma_0(\V)$ for some vector bundle $\V$ and metric $\beta$. 
\end{theorem}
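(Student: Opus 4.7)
\emph{Forward direction.} The main claim to establish is that for each compact $K \subset X$, the quotient $\Gamma_0(\V)_{C_0(X \setminus K)}$ is isomorphic as a Hilbert $C(K)$-module to $\Gamma(\V|_K)$, where $\V|_K$ denotes the restriction of $\V$ to $K$; once this is in hand, the classical Serre--Swan theorem of Section \ref{sect:Hilbert C(X)-modules} applied over the compact space $K$ immediately yields that $\Gamma(\V|_K)$ is finitely generated and projective over $C(K)$. The desired isomorphism is induced by the restriction map $\Phi_K: \Gamma_0(\V) \to \Gamma(\V|_K)$ already used in the previous proposition. Surjectivity of $\Phi_K$ follows from a Tietze-extension-style argument using local trivializations, a subordinate partition of unity, and a cut-off outside a larger compact neighborhood. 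For the kernel, the inclusion $\Gamma_0(\V)\,C_0(X\setminus K) \subseteq \ker\Phi_K$ is obvious; for the reverse, if $\xi|_K = 0$ then $\langle\xi,\xi\rangle \in C_0(X\setminus K)$, so multiplying $\xi$ by an approximate unit of the ideal $C_0(X\setminus K)$ approximates $\xi$ in norm, placing it in $\overline{\Gamma_0(\V)\,C_0(X\setminus K)} = \Gamma_0(\V)\,C_0(X\setminus K)$.

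\emph{Backward direction.} Given a full, $\sigma$-finitely generated and $\sigma$-projective Hilbert $C_0(X)$-module $\E$, I would first apply the classical Serre--Swan theorem to each quotient $\E_K := \E_{C_0(X\setminus K)}$ for $K \subset X$ compact, producing a vector bundle $\V_K$ on $K$ with hermitian metric $\beta_K$ such that $\E_K \cong \Gamma(\V_K)$. For compact sets $K \subseteq L$, the natural surjection $\E_L \twoheadrightarrow \E_K$ coming from the inclusion of ideals $C_0(X\setminus L) \subseteq C_0(X\setminus K)$ corresponds under Serre--Swan to restriction of sections, so functoriality together with Theorem \ref{thm:metric uniqueness} produces metric-preserving isomorphisms $\V_L|_K \cong \V_K$ compatible with further restriction. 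Choosing an exhaustion $X = \bigcup_n K_n$ with $K_n \subseteq \interior(K_{n+1})$, available by second countability as fixed in Section \ref{sec:Hilbert Czero modules}, these local pieces glue to a global vector bundle $\V$ on $X$ equipped with a globally defined hermitian metric $\beta$. Fullness of $\E$ forces each fiber of $\V$ to be nonzero, so $\V$ satisfies the standing assumption of positive rank at every point.

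\emph{Finishing the isomorphism and main obstacle.} To conclude, I would define a map $\Psi: \E \to \Gamma_0(\V)$ by $\Psi(\xi)(x) = [\xi]_x \in \E_{\{x\}} \cong \V_x$, where the fibers are identified via the local Serre--Swan isomorphisms from the previous step. Compatibility across the exhaustion shows that $\Psi(\xi)$ restricts to a continuous section on each $K_n$ and hence is continuous on $X$. Isometricity follows because $\|\xi\|^2 = \sup_{x \in X}\|[\xi]_x\|^2$, which also gives that $\Psi(\xi)$ vanishes at infinity. Density of the range uses the isomorphisms $\E_{K_n} \cong \Gamma(\V|_{K_n})$ combined with the extension of sections in the forward direction. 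I expect the main obstacle to be the gluing step in the backward direction: producing coherent transition data between the locally defined bundles $\V_{K_n}$ and checking that the metrics $\beta_{K_n}$ assemble into a single globally defined hermitian metric, which hinges on carefully invoking the uniqueness up to isometry supplied by Theorem \ref{thm:metric uniqueness} at each overlap.
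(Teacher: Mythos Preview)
Your forward direction matches the paper's exactly: both reduce to showing $\Gamma_0(\V)_{C_0(X\setminus K)}\cong\Gamma(\V|_K)$ and then invoke classical Serre--Swan over the compact space $K$.

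Your backward direction takes a genuinely different route. The paper avoids gluing altogether: it defines the total space of $\V$ directly as the set $\bigsqcup_{x\in X}\E_x$ (the disjoint union of the point-fibers of $\E$), equips it with the final topology induced by the maps $\phi_K\colon E_K\to\bigsqcup_x\E_x$ coming from the Serre--Swan identifications $\E_K\cong\Gamma(\V_K)$, and then checks local triviality. This sidesteps the coherence obstacle you flag, since every $\V_K$ sits canonically inside the same ambient set via the fiber isomorphisms $p_K^{-1}(x)\cong\E_x$. Your exhaustion-and-glue strategy can also be made to work, and in fact the coherence you worry about is automatic: the bundle isomorphisms $\V_L|_K\cong\V_K$ are induced functorially by the \emph{canonical} quotient maps $\E_L\twoheadrightarrow\E_K$, so the cocycle condition holds on the module side and transfers via the Serre--Swan equivalence (Theorem~\ref{thm:metric uniqueness} is not actually needed here, since the metrics on the $\V_K$ already come from the inner product on $\E$). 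One genuine caveat: the theorem is stated for an arbitrary locally compact Hausdorff $X$, whereas your argument requires a countable compact exhaustion and hence $\sigma$-compactness. The paper's final-topology construction works in the stated generality; the only countable sequence of compacts it uses is built from a single section $\xi\in\Gamma_0(\V)$ (namely $K_n=\{x:\norm{\xi(x)}\geq 1/n\}$) when proving surjectivity of $\E\to\Gamma_0(\V)$, which needs no global assumption on $X$.
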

\begin{proof}

    That $\Gamma_0(\V)$ is $\sigma$-finitely generated and $\sigma$-projective follows from the fact that the quotient module $\Gamma_0(\V)_{C_0(X\backslash K)}$ is isomorphic to $\Gamma(\V|_{K})$ together with the Serre--Swan theorem. 

    Conversely, let $\E$ be a full $\sigma$-finitely generated and $\sigma$-projective Hilbert $C_0(X)$-module. Let $K$ be a compact subset of $X$. To ease notation we write $\E_K$ for the quotient $\E_{C_0(X\backslash K)}$ of $\E$ by $\E C_0(X\backslash K)$. Using the Serre--Swan theorem yields a vector bundle $\V_K=[E_K,p_K,K]$ over $K$ such that $\E_K$ and $\Gamma(\V_K)$ are isomorphic as right Hilbert $C(K)$-modules. 
    
    Consider the disjoint union 
    \[\Tilde{E}\coloneqq \bigsqcup_{x\in X}\E_x\]
    as a set, and define the obvious projection $\Tilde{p}$ from $\Tilde{E}$ onto $X$. The surjection from $\E$ onto $\E_K$ yields a vector space isomorphism between the fibers $\E_x$ and $p_K^{-1}(x)$, for all $x\in K$. We thus obtain fiber-preserving maps $\phi_K$ from $E_K$ to $\Tilde{E}$, which are bijections onto their image $\Tilde{p}^{-1}(K)$. Equip $\Tilde{E}$ with the final topology with respect to the family of maps $\{\phi_K:K\subset X\text{ compact}\}$, and call the resulting topological space $E$. The projection $\Tilde{p}$ gives rise to a continuous map $p$ from $E$ to $X$. This is because for all compact $K$, the concatenation $p\circ\phi_K$ is equal to the bundle projection $p_K$, which is continuous. It then follows from the properties of the final topology that $p$ is continuous as well. 

    We obtain a triple $\V\coloneqq[E,p,X]$ with $p:E\to X$ a continuous map. By definition the fiber $p^{-1}(x)$ over any point $x\in X$ is a complex vector space. In order to show that $\V$ is a vector bundle, we need to verify that it is locally trivial. To this end, observe that the bijections $\phi_K$ from above give rise to homeomorphisms from $E_K$ to $p^{-1}(K)$, which we also call $\phi_K$. Local triviality of $\V$ now follows from the local triviality of the $\V_K$. 

    We now show that the space of continuous sections of $\V$ vanishing at infinity, with respect to the metric on $\V$ induced fiberwise by the inner product on $\E$, is isomorphic to $\E$. An element $\xi$ of $\E$ induces a function $\Tilde{\xi}$ from $X$ to $\Tilde{E}$ such that $\Tilde{p}\circ\Tilde{\xi}=\id_X$. Using that the $\phi_K$ are homeomorphisms, we see that $\Tilde{\xi}$ gives rise to a continuous section of $\V$. This section vanishes at infinity by definition of the metric on $\V$. Thus there is a linear module map from $\E$ to $\Gamma_0(\V)$, which is in fact isometric. All that remains to show is surjectivity of this map. Take an element $\xi$ of $\Gamma_0(\V)$. For every $n\in\N$ define the compact set $K_n$ containing all points $x\in X$ such that the norm of $\xi(x)$ is greater or equal then $1/n$. Thus we obtain a sequence $(K_n)$ of compact subsets of $X$ such that $K_n$ is contained in the interior of $K_{n+1}$, for all $n\in\N$. Take a sequence $(\gamma_n)$ of continuous functions such that $\gamma_n$ is equal to one on $K_n$, and equal to zero outside the interior of $K_{n+1}$. Then $\gamma_n\xi$ is an element of $\Gamma(\V_{K_{n+1}})\cong\E_{K_{n+1}}$ whose support is contained in the interior of $K_{n+1}$. Thus we can regard $\gamma_n\xi$ as an element of $\E$. We obtain a Cauchy sequence $(\gamma_n\xi)$ inside of $\E$. Since $\E$ is complete, this sequence has a limit $\xi'$, whose value in each fiber is equal to that of $\xi$. Hence our isometry from $\E$ to $\Gamma_0(\V)$ maps $\xi'$ to $\xi$. This shows that it is surjective, and thus an isomorphism. 
    
    \end{proof}

  One can go a step further and show that there is an equivalence of categories between the category of vector bundles equipped with hermitian metrics and metric preserving bundle morphisms over $X$ on one hand, and the category of full $\sigma$-finitely generated and $\sigma$-projective Hilbert $C_0(X)$-modules with Hilbert module morphisms on the other hand. This is analogous to the equivalence of categories established by the classical Serre--Swan theorem. 

We now investigate the structure of the $C^*$-algebra of compact operators $\K(\E)$ of a $\sigma$-finitely generated and $\sigma$-projective Hilbert $C_0(X)$-module. First some notation: We write $I_x$ for $C_0(X\backslash\{x\})$, and $\E_x$ for the quotient $\E/\E I_x$. Then $\E_x$ is a Hilbert $\C$-module, and hence a Hilbert space.

We define a $\ast$-homomorphism $R:C_0(X)\to\calL(\E)$ by $R(f)\xi=\xi f$. It is shown in \cite[Section 3.2]{boenicke:2018} that $R$ maps into the center of $\calL(\E)\cong M(\K(\E))$ and is non-degenerate. By definition, this gives $\K(\E)$ the structure of a $C_0(X)$-algebra. The fiber $\K(\E)_x$ at a point $x\in X$ is isomorphic to $\K(\E_x)$, see \cite[Page 72]{boenicke:2018}. The algebra of adjointable operators, being the multiplier algebra of $\K(\E)$, fibers over $X$ as well, but it is not a $C_0(X)$-algebra (see \cite{Akemann:1973}).

\begin{proposition}\label{cor:compacts for sigma fgp module}
    Let $\E$ be a $\sigma$-finitely generated and $\sigma$-projective Hilbert $C_0(X)$-module. Then the compact operators $\K(\E)$ are a continuous $C_0(X)$-algebra, and \[\K(\E)=\{t\in\calL(\E):(x\mapsto\norm{t_x})\in C_0(X)\}.\] 
\end{proposition}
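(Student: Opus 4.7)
The strategy is to identify $\E$ with the sections $\Gamma_0(\V)$ of some vector bundle via Theorem \ref{thm:serre swan locally compact}, and then combine the local triviality of $\V$ with the finite frame argument from Section \ref{sect:Hilbert modules}. The basic observation I would use throughout is that every $s\in\calL(\E)$ satisfies $\norm{s}=\sup_{x\in X}\norm{s_x}$, where $s_x\in\calL(\E_x)$ denotes the operator induced on the quotient Hilbert space; this follows from $\norm{\xi}^2=\sup_x\norm{\xi(x)}^2$ for $\xi\in\Gamma_0(\V)$, which directly yields $\norm{s\xi}^2=\sup_x\norm{s_x\xi(x)}^2\leq(\sup_x\norm{s_x})^2\norm{\xi}^2$, and the reverse inequality $\norm{s}\geq\norm{s_x}$ is immediate.

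To prove continuity of $\K(\E)$ as a $C_0(X)$-algebra I would argue locally: for any $x_0\in X$ I pick a precompact open neighborhood $U$ on which $\V$ trivialises, so that the quotient Hilbert $C(\bar{U})$-module $\E_{C_0(X\backslash\bar{U})}$ is isomorphic to $C(\bar{U})^n$, whence $\K(\E_{C_0(X\backslash\bar{U})})\cong M_n(C(\bar{U}))$ is a continuous $C(\bar{U})$-algebra. The canonical map $\K(\E)\to\K(\E_{C_0(X\backslash\bar{U})})$ preserves the fibers at points of $\bar{U}$, so $x\mapsto\norm{t_x}$ is continuous on $\bar{U}$ for every $t\in\K(\E)$. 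Since $x_0$ was arbitrary, continuity holds globally.

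The inclusion $\K(\E)\subset\{t\in\calL(\E):(x\mapsto\norm{t_x})\in C_0(X)\}$ combines this continuity with vanishing at infinity, which I would deduce from non-degeneracy of the central $C_0(X)$-action: any $t\in\K(\E)$ is a norm limit of elements of the form $R(g)t$ with $g\in C_c(X)$, and this forces $\norm{t_x}$ to be arbitrarily small outside $\supp(g)$. For the reverse inclusion, given $t\in\calL(\E)$ with $x\mapsto\norm{t_x}\in C_0(X)$ and $\varepsilon>0$, the set $\{x:\norm{t_x}\geq\varepsilon\}$ is compact; choosing $f\in C_c(X)$ with $0\leq f\leq 1$ that equals $1$ on this set, the preliminary observation gives $\norm{t-R(f)t}=\sup_x(1-f(x))\norm{t_x}\leq\varepsilon$, so it suffices to show $R(f)t\in\K(\E)$.

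The main step is then the last one. Let $K$ be a compact neighborhood of $\supp(f)$; by $\sigma$-finite generation the quotient $\E_{C_0(X\backslash K)}$ admits a finite frame $\{\zeta_1,\ldots,\zeta_n\}$, which I lift arbitrarily to elements $\tilde{\zeta}_i\in\E$. The operator $p\coloneqq\sum_i\theta_{\tilde{\zeta}_i,\tilde{\zeta}_i}\in\K(\E)$ then satisfies $p_x=\id_{\E_x}$ for every $x\in K$ by the frame identity applied in the quotient, so $p\cdot R(f)t$ and $R(f)t$ agree on every fiber, hence coincide in $\calL(\E)$ by the preliminary observation. Since $\K(\E)$ is an ideal in $\calL(\E)$, we obtain $R(f)t=p\cdot R(f)t\in\K(\E)$. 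The main obstacle is really organisational: keeping the three pictures—the $C_0(X)$-algebra structure on $\K(\E)$, the quotient Hilbert module $\E_{C_0(X\backslash K)}$, and the fiberwise operators on $\E_x$—aligned carefully enough that the lift $p$ genuinely restricts to the identity on each fiber over $K$. Once the identity $\norm{s}=\sup_x\norm{s_x}$ for adjointable $s$ is in hand, the rest is routine bookkeeping.
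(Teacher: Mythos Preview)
Your proof is correct and, for the continuity statement, follows the same route as the paper: both of you use local triviality of the bundle to identify $\K(\E)$ over a trivialising neighbourhood with $C_0(W)\otimes M_n$, which is manifestly continuous.

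For the identity $\K(\E)=\{t\in\calL(\E):(x\mapsto\norm{t_x})\in C_0(X)\}$ the paper is considerably terser: it simply remarks that $\K(\E)$ is generated by rank-one operators $\theta_{\xi,\eta}$ with $\xi,\eta$ vanishing at infinity, which as written only gives the inclusion $\subset$. Your argument is more explicit and supplies both directions. In particular, your reverse inclusion---cutting down by $R(f)$ with $f\in C_c(X)$ and then producing a finite-rank operator $p$ that acts as the identity on fibres over a compact set containing $\supp(f)$---is precisely the mechanism behind Lemma~\ref{lem:frame for submodule}, which the paper proves \emph{after} this proposition and does not invoke here. So your approach is essentially the same in spirit but fills in the harder inclusion with a self-contained frame argument, at the cost of a little more bookkeeping; the paper's version is shorter but leaves that inclusion to the reader.
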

\begin{proof}
    Since the vector bundle is locally trivial, the compacts are as well, meaning that around every point $x$ there exists a neighborhood $W$ such that $R(C_0(W))\K(\E)$ is isomorphic to $C_0(W)\otimes M_{d(x)}$ as $C_0(W)$-algebras. We obtain that $\K(\E)$ is a continuous $C_0(X)$-algebra.

    The second statement follows from the fact that $\K(\E)$ is generated by rank-one operators $\theta_{\xi,\eta}$ and $\xi$ as well as $\eta$ vanish at infinity. 
  
\end{proof}
At the end of this section we establish two technical lemmas under the additional assumption that the space $X$ is second countable. 

 Let $X$ be a locally compact second countable Hausdorff space. In particular $X$ is paracompact and exhaustible by compact sets. The latter means that there exists a sequence of compact sets $K_n$ such that $K_n\subset \interior(K_{n+1})$ and $\bigcup_n K_n=X$. 

\begin{lemma}\label{lem:inductive limit exhaustion by compact sets}
    Let $(K_n)$ be a sequence of compact subsets exhausting $X$, and let $\E$ be a $C_0(X)$-module. Then $\bigcup_n\E C_0(\interior K_n)$ is dense in $\E$. 
\end{lemma}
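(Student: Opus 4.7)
The plan is to construct an approximate unit $(f_n)$ for $C_0(X)$ whose $n$th term sits inside $C_0(\interior K_{n+1})$, and then invoke the standard principle that right-multiplying by an approximate unit of the coefficient algebra recovers any element of a Hilbert module. First I would apply Urysohn's lemma for locally compact Hausdorff spaces, together with the nesting $K_n\subset\interior K_{n+1}$, to produce a continuous function $f_n:X\to[0,1]$ satisfying $f_n\equiv 1$ on $K_n$ and $\supp(f_n)\subset\interior K_{n+1}$. Then $f_n$ lies in $C_0(\interior K_{n+1})$, viewed as an ideal of $C_0(X)$.

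Next I would verify that $(f_n)$ is an approximate unit for $C_0(X)$: for any $g\in C_0(X)$ and $\varepsilon>0$, the set $\{x\in X:|g(x)|\ge\varepsilon\}$ is compact and hence contained in some $K_n$; since $f_n\equiv 1$ on $K_n$ and $0\le f_n\le 1$ everywhere, it follows that $\|g-gf_n\|_\infty\le\varepsilon$. Since $\bigcup_n K_n=X$, this shows $gf_n\to g$ for every $g\in C_0(X)$.

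For the final step, given $\xi\in\E$, I would use that each $f_n$ is real-valued and $C_0(X)$ is commutative to compute $\|\xi-\xi f_n\|^2 = \|(1-f_n)\langle\xi,\xi\rangle_\E(1-f_n)\|$, interpreted in the unitisation of $C_0(X)$. Since $\langle\xi,\xi\rangle_\E\in C_0(X)$ and $(f_n)$ is an approximate unit, this norm tends to $0$, so $\xi f_n\to\xi$ in $\E$. Because each $\xi f_n$ lies in $\E C_0(\interior K_{n+1})$, this proves that $\bigcup_n \E C_0(\interior K_n)$ is dense in $\E$.

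There is no substantial obstacle here: this is essentially a packaging of the standard fact that an approximate unit of $A$ acts as an approximate unit on any right Hilbert $A$-module. The only point requiring mild care is ensuring that the cutoff functions $f_n$ genuinely sit inside $C_0(\interior K_{n+1})$ rather than just $C_0(X)$, which is precisely why the strict nesting $K_n\subset\interior K_{n+1}$ built into the exhaustion hypothesis is essential.
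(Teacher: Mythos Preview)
Your proof is correct and takes essentially the same approach as the paper: build an approximate unit for $C_0(X)$ whose terms lie in the ideals $C_0(\interior K_n)$, then use that an approximate unit of the coefficient algebra acts as one on the Hilbert module. The paper's argument is simply a terse version of yours, asserting the existence of such an approximate unit and the convergence $\xi\chi_n\to\xi$ without spelling out the Urysohn construction or the inner-product computation.
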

\begin{proof}
    Since the $K_n$ exhaust $X$ we can take an approximate unit $(\chi_n)_{n\in\N}$ of $C_0(X)$ such that $\chi_n\in C_0(\interior K_n)$ for all $n\in\N$. Then $\xi \chi_n\to \xi$ in norm for all $\xi\in\E$. This shows the claim.
\end{proof}
 
\begin{lemma}\label{lem:frame for submodule}
    Let $\E$ be a $\sigma$-finitely generated and $\sigma$-projective Hilbert $C_0(X)$-module. For every compact subset $K$ of $X$ there exists $N\in\N$ and a collection $\{\xi_n\}_{n=1}^N$ of elements of $\E$ such that 
    \[\sum_{n=1}^N\theta_{\xi_n,\xi_n}\eta=\eta,\] 
    for all $\eta\in\E C_0(\interior K)$. This implies that $\E$ is countably generated. 
\end{lemma}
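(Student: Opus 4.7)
My plan is to pass to a compact neighborhood $L$ of $K$, pull back a finite frame from the quotient $\E_{C_0(X\setminus L)}$, and then twist each frame element by a cut-off function supported in $\interior L$.

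First, I use local compactness to find a compact set $L \sub X$ with $K \sub \interior L$. By the $\sigma$-finite generation and $\sigma$-projectivity hypotheses, the quotient $\E_{C_0(X\setminus L)}$ is a finitely generated projective Hilbert $C(L)$-module, so the classical Serre--Swan correspondence together with Lemma \ref{lem:frame vector bundle} yields a finite frame, which I lift to elements $\zeta_1,\dots,\zeta_N \in \E$. Unwinding the definition of the quotient inner product, the frame identity for $\{[\zeta_n]\}$ in $\E_{C_0(X\setminus L)}$ translates into
\[
\eta - \sum_{n=1}^{N}\theta_{\zeta_n,\zeta_n}\eta \in \E C_0(X\setminus L) \qquad \text{for all } \eta\in\E.
\]

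Next, I fix a cutoff $\chi\in C_c(X)$ with $0\leq\chi\leq 1$, $\chi\equiv 1$ on $K$, and $\supp\chi\sub\interior L$, and set $\xi_n \coloneqq \zeta_n\chi$. A short computation using commutativity of $C_0(X)$ gives
\[
\sum_{n=1}^{N}\theta_{\xi_n,\xi_n}\eta \;=\; \sum_{n=1}^{N}\zeta_n\la\zeta_n,\eta\chi^2\ra \;=\; \sum_{n=1}^{N}\theta_{\zeta_n,\zeta_n}(\eta\chi^2),
\]
and for $\eta\in \E C_0(\interior K)$ we have $\eta\chi^2=\eta$ because $\chi^2\equiv 1$ on the support of any $f\in C_0(\interior K)$. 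Hence $\sum_n\theta_{\xi_n,\xi_n}\eta = \sum_n\theta_{\zeta_n,\zeta_n}\eta$ for $\eta\in\E C_0(\interior K)$.

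The main (but small) obstacle is to promote the frame identity from a congruence modulo $\E C_0(X\setminus L)$ to an honest equality on $\E C_0(\interior K)$. For $\eta = \eta' f$ with $f\in C_0(\interior K)$, both $\eta$ and $\sum_n\theta_{\zeta_n,\zeta_n}\eta = \sum_n\zeta_n\la\zeta_n,\eta'\ra f$ lie in $\E C_0(\interior L)$, so the difference lies in $\E C_0(\interior L)\cap \E C_0(X\setminus L)$. This intersection is $\{0\}$: for any $\xi$ in it one has $\la\xi,\xi\ra\in C_0(\interior L)\cap C_0(X\setminus L) = \{0\}$, hence $\xi=0$. Combining the two preceding displays, $\sum_n\theta_{\xi_n,\xi_n}\eta=\eta$ for all $\eta\in\E C_0(\interior K)$.

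For the final assertion that $\E$ is countably generated, I will pick an exhaustion $(K_m)_{m\in\N}$ of $X$ by compact sets with $K_m\sub\interior K_{m+1}$, which exists by second countability, and apply the construction above to each $K_m$ to produce finite families $\{\xi_n^{(m)}\}_n$. The countable union $\bigcup_m\{\xi_n^{(m)}\}_n$ generates the $C_0(X)$-span of $\bigcup_m \E C_0(\interior K_m)$, which is dense in $\E$ by Lemma \ref{lem:inductive limit exhaustion by compact sets}, so it generates $\E$.
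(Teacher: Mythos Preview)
Your argument is correct. In fact, the cut-off step is redundant: once you have shown that $\eta - \sum_n\theta_{\zeta_n,\zeta_n}\eta \in \E C_0(\interior L)\cap \E C_0(X\setminus L) = \{0\}$ for $\eta\in\E C_0(\interior K)$, the $\zeta_n$ themselves already satisfy the required identity, so replacing them by $\zeta_n\chi$ is unnecessary (though harmless). Also note that you do not really need Serre--Swan and Lemma~\ref{lem:frame vector bundle} for the existence of a finite frame in $\E_{C_0(X\setminus L)}$; the paper records earlier that any finitely generated Hilbert module over a unital $C^*$-algebra admits a finite frame.

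Your route is genuinely different from the paper's. The paper first invokes Theorem~\ref{thm:serre swan locally compact} to reduce to $\E=\Gamma_0(\V)$, takes a finite frame of the restricted module $\Gamma(\V|_K)$, and then extends these sections back to $\Gamma_0(\V)$ using Karoubi's section extension theorem. You instead stay at the abstract Hilbert-module level: enlarge $K$ to a compact ``buffer'' $L$, lift a frame from the quotient $\E_{C_0(X\setminus L)}$ via the (automatically surjective) quotient map, and then upgrade the resulting congruence modulo $\E C_0(X\setminus L)$ to an equality on $\E C_0(\interior K)$ via the ideal-disjointness observation $C_0(\interior L)\cap C_0(X\setminus L)=\{0\}$. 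The trade-off is that the paper's proof is shorter once one accepts the locally-compact Serre--Swan machinery and the external extension lemma, whereas your argument is more self-contained: it uses neither Theorem~\ref{thm:serre swan locally compact} nor the Karoubi reference, only the defining hypotheses and elementary Hilbert-module facts.
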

\begin{proof} Let $K$ be a compact subset of $X$.
    According to Theorem \ref{thm:serre swan locally compact} we can assume without loss of generality that $\E$ is of the form $\Gamma_0(\V)$ for some vector bundle $\V$ over $X$. Since $\Gamma(\V|_K)$ is finitely generated, it has a frame. Hence there exists $N\in\N$ and sections $\xi_1^{(0)},\xi_2^{(0)},...,\xi_N^{(0)}$ in $\Gamma(\V|_K)$ such that 
    \[\sum_{n=1}^N\theta_{\xi_n^{(0)},\xi_n^{(0)}}\eta=\eta,\] 
    for all $\eta\in\Gamma(\V|_K,\beta|_K)$. By \cite[Theorem 5.7]{Karoubi:1978} we can extend the $\xi_n^{(0)}$ to sections $\xi_n\in\Gamma_0(\V)$. Note that \cite[Theorem 5.7]{Karoubi:1978} is only stated for $X$ compact, but the adaptation to locally compact spaces is straightforward. The claim follows. Since the space of all compactly supported sections is dense in $\Gamma_0(\V)$ and $X$ is exhaustible by compact sets, this implies that $\Gamma_0(\V)$ is countably generated. 
\end{proof}

\subsection{Partial actions}

    The $C^*$-correspondences of interest to us are over commutative $C^*$-algebras. They arise from partial automorphisms on a compact Hausdorff space $X$ that are twisted by vector bundles. For this, we need the notion of a partial action. Details can be found in Exel's book \cite{Exel:2017}. Exel was also the first one to consider partial actions on $C^*$-algebras \cite{Exel:1994}.

\begin{definition}\label{def:partial action}
    Let $G$ be a discrete group and $X$ be a topological space. A \emph{partial action} of $G$ on $X$ consists of a collection $\{D_g\}_{g\in G}$ of open subsets of $X$ together with homeomorphisms $\theta_g:D_{g^{-1}}\to D_g$ such that 
    \begin{enumerate}
        \item we have $D_e=X$ and $\theta_e=\id_X$, where $e$ is the neutral element in the group, and 
        \item if $x\in D_{h^{-1}}$ and $\theta_h(x)\in D_{g^{-1}}$, then $x\in D_{(gh)^{-1}}$ and $\theta_{gh}(x)=\theta_g(\theta_h(x))$.
    \end{enumerate}
\end{definition}
\begin{remark}\label{rem:domains partial action}
    If $(\{D_g\}_{g\in G},\{\theta_g\}_{g\in G})$ is a partial action, then we have \[\theta_g(D_{g^{-1}}\cap D_h)=D_g\cap D_{gh},\]
    for all $g,h\in G$.
\end{remark}

If $D_g=X$ for all $g\in G$, then we recover the usual notion of an action of a group. In this case, we will somtimes say that the action is \emph{global}.

\begin{example}\label{ex:partial automorphism}
    Let $U$ and $V$ be two open subsets of $X$, and let $\theta:U\to X$ be a homeomorphism. Analogous to the global case, we obtain a partial $\Z$-action from $\theta$ by taking its powers: We define $D_{-1}\coloneqq U$ and $D_1\coloneqq V$, and then recursively $D_n\coloneqq\theta^{n-1}(D_{1-n}\cap D_1)$ and $D_{-n}\coloneqq\theta^{1-n}(D_{n-1}\cap D_{-1})$ for $n\geq 1$. One can show that $\theta_n\coloneqq\theta^n$ indeed maps from $D_{-n}$ to $D_n$, and that this defines a partial action on $X$. We also call $\theta$ a \emph{partial automorphism}, following \cite{Geffen2021}. It easily follows from the definitions of the domains that
    \[
    ...\subset D_{-2}\subset D_{-1}\subset X\supset D_1\supset D_2\supset ....
    \]

\end{example}
We define basic properties of partial actions which we will frequently encounter in the rest of this paper.
\begin{definition}
    A partial action $\{\theta_g,D_g\}_{g\in G}$ is called \emph{free} if for every $x\in D_{g^{-1}}$, the equality $\theta_g(x)=x$ implies $g=e$.
    
    A set $Y\subset X$ is called \emph{$\theta$-invariant} if for all $g\in G$ we have $\theta_g(Y\cap D_{g^{-1}})\subset Y$. A partial action is called \emph{minimal} if it does not have any closed (equivalently, open) $\theta$-invariant subsets. 
\end{definition}
A set $Y$ is invariant under a partial automorphism $\theta:U\to V$ as in example \ref{ex:partial automorphism} if and only if we have $\theta(Y\cap U)\subset Y$ and $\theta^{-1}(Y\cap V)\subset Y$.

 \subsection{Partial automorphisms, vector bundles and their \texorpdfstring{$C^*$}{C*}-correspondences}
Let $X$ be a locally compact second countable Hausdorff space, and let $\theta:U\to V$ be a partial automorphism on $X$. Let $\V$ be a vector bundle over $U$. Consider $\Gamma_0(\V)$, the space of continuous sections of $\V$ vanishing at infinity with respect to any metric on $\V$. We have shown in Section \ref{sec:Hilbert Czero modules} that $\Gamma_0(\V)$ is a Hilbert $C_0(U)$-module with inner product $\langle\xi,\eta\rangle=\beta(\xi(x),\eta(x))$, where $\beta$ is any hermitian metric on $\V$. The inner product is independent of the choice of $\beta$, up to isometry.

 The aim of this section is to construct a $C^*$-correspondence over $C_0(X)$ from the partial automorphism $\theta$ and from $\Gamma_0(\V)$. It is therefore an issue that $\Gamma_0(\V)$ is a priori only a $C_0(U)$- and not a $C_0(X)$-module. We can fix this by extending the right multiplication on $\Gamma_0(\V)$ to $C_0(X)$ by setting $(\xi f)(x)=0$ for $x\in X\backslash U$. We still denote the resulting Hilbert $C_0(X)$-module by $\Gamma_0(\V)$. It is not full since by Remark \ref{rem:sigma fp module is full} we have $\langle\Gamma_0(\V),\Gamma_0(\V)\rangle=C_0(U)$. 

 For every $f\in C_0(X)$ we define a map $\varphi(f)$ on $\Gamma_0(\V)$ by setting $(\varphi(f)\xi)(x)=\xi(x) f(\theta(x))$ for $x\in U$ and $(\varphi(f)\xi)(x)=0$ for $x\in X\backslash U$. Since $f\circ\theta$ interpreted as a function on $U$ is bounded, the expression $\xi(f\circ\theta)$ is a continuous section that lies in $\Gamma_0(\V)$. We will often write $\varphi(f)\xi=\xi(f\circ\theta)$, where it is understood that the expression $f\circ\theta$ is not well defined on its own. 
 
 One can show in the same way as in \cite{adamo2023} that $\varphi$ maps into the adjointable operators. Hence $\Gamma_0(\V)$ together with $\varphi$ forms a $C^*$-correspondence over $C_0(X)$, which we will denote by $\Gamma_0(\V,\theta)$. If it is clear what the vector bundle and the partial automorphism are, then we will often write $\E$ for $\Gamma_0(\V,\theta)$. In that case the structure map is denoted by $\varphi_\E$ and the inner product by $\langle\cdot,\cdot\rangle_\E$.
 \begin{lemma}\label{lem:katsura ideal}
We have
\begin{align*}
    \ker\varphi_\E=\{f\in C_0(X):f|_V=0\},\quad (\ker\varphi_\E)^\perp=C_0(\interior(\overline{V})), \quad\text{and}\quad \varphi_\E^{-1}(\mathcal{K}(\E))=C_0(V).\quad 
\end{align*}
In particular
\[ J_\E:= \varphi_\E^{-1}(\mathcal{K}(\E))\cap(\ker\varphi_\E)^\perp=C_0(V).\]
\end{lemma}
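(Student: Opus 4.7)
The plan is to verify the three displayed identities separately and then take their intersection. The first two rely on Gelfand duality between ideals of $C_0(X)$ and open subsets of $X$, while the third invokes Proposition~\ref{cor:compacts for sigma fgp module}.

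For $\ker\varphi_\E$, I would argue fiberwise: given $f\in C_0(X)$ with $\varphi_\E(f)=0$, pick any $x\in U$ and a section $\xi\in\Gamma_0(\V)$ with $\xi(x)\neq 0$---such $\xi$ exists because every fiber of $\V$ is nonzero---and infer from $(\varphi_\E(f)\xi)(x)=\xi(x)f(\theta(x))=0$ that $f(\theta(x))=0$. Letting $x$ range over $U$ and using $\theta(U)=V$ yields $f|_V=0$; the reverse inclusion is immediate. For the orthogonal complement, continuity of $f$ upgrades the condition $f|_V=0$ to $f|_{\overline V}=0$, so that $\ker\varphi_\E=C_0(X\setminus\overline V)$; then the standard lattice identity $C_0(W)^\perp=C_0(X\setminus\overline W)$ for an open set $W\subset X$, applied to $W=X\setminus\overline V$, delivers $(\ker\varphi_\E)^\perp=C_0(\interior(\overline V))$.

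For $\varphi_\E^{-1}(\K(\E))$, I would observe that $\varphi_\E(f)$ acts on $\E=\Gamma_0(\V)$ as central multiplication by the bounded continuous function $f\circ\theta$ on $U$, with fiber norm $|f(\theta(x))|$ at $x\in U$. Applying Proposition~\ref{cor:compacts for sigma fgp module} to $\Gamma_0(\V)$ as a $\sigma$-finitely generated and $\sigma$-projective Hilbert $C_0(U)$-module, membership of $\varphi_\E(f)$ in $\K(\E)$ is equivalent to this fiber-norm function lying in $C_0(U)$, and via the homeomorphism $\theta:U\to V$ this becomes the condition $f|_V\in C_0(V)$.

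Finally, I would intersect to obtain $J_\E$. The orthogonality condition forces $f$ to vanish on $X\setminus\interior(\overline V)=\overline{X\setminus\overline V}$, and the compactness condition, together with continuity of $f$, forces $f$ to vanish on the topological boundary $\overline V\setminus V$. These two closed sets cover $X\setminus V$, so $f$ vanishes outside $V$ and belongs to $C_0(V)$; the reverse containment is direct. The main obstacle I expect is the third step: one must carefully apply Proposition~\ref{cor:compacts for sigma fgp module} to $\E$ regarded as a Hilbert $C_0(U)$-module (since it is non-full over $C_0(X)$) and then correctly translate the $C_0(V)$-vanishing condition on $f|_V$ into the claimed ideal of $C_0(X)$ at the topological boundary $\partial V$.
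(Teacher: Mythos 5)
Your argument is correct, and for the first two identities it follows essentially the same route as the paper: a fiberwise nonvanishing-section argument for the kernel (the paper phrases it contrapositively, constructing the section explicitly from a chart and a bump function, which is exactly what your ``such $\xi$ exists'' needs), then the upgrade from $f|_V=0$ to $f|_{\overline{V}}=0$ by continuity and the standard computation of the orthogonal complement. Where you genuinely diverge is the third identity. The paper disposes of it in one line, asserting that $\varphi_\E^{-1}(\K(\E))=C_0(V)$ ``follows from Proposition~\ref{cor:compacts for sigma fgp module}'', whereas you carry out the fiber-norm computation (correctly insisting on viewing $\E$ as a full Hilbert $C_0(U)$-module first) and land on the weaker condition that $f|_V$ vanish at infinity in $V$. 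Your version is the right one: since $0\in\K(\E)$, the preimage $\varphi_\E^{-1}(\K(\E))$ always contains $\ker\varphi_\E=C_0(X\setminus\overline{V})$, which is not contained in the ideal $C_0(V)\unlhd C_0(X)$ unless $\overline{V}=X$, so the third displayed equality of the lemma cannot hold as literally stated. Your additional step in the intersection --- using $C_0(\interior(\overline{V}))$ to kill $f$ off $\overline{V}$ and the vanishing-at-infinity condition together with continuity to force $f=0$ on $\overline{V}\setminus V$ --- is precisely what is needed to recover the conclusion $J_\E=C_0(V)$, which is the only part of the lemma invoked later (e.g.\ in Proposition~\ref{prop:line bundle bimodule} and Lemma~\ref{lem:calculating ideal for core}). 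In short, your proof is not only valid but repairs a small inaccuracy in the statement and supplies the justification the paper leaves implicit.
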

\begin{proof}
 For every function $f$ in $C_0(X)$ such that $f|_V=0$ we have
\begin{align*}
    \varphi_\E(f)\xi=\xi(f\circ\theta)=0
\end{align*}
for all $\xi\in\E$, and hence 
\begin{align*}
    \{f\in C_0(X):f|_V=0\}\subset\ker \varphi_\E.
\end{align*}
On the other hand, let $f$ be in $C_0(X)$ such that $f(x_0)\neq0$ for some $x_0\in V$. Then there exists an open set $W\subset V$ such that $f(x)\neq0$ for every $x\in W$. Shrinking $W$ if necessary we may assume there exists an isomorphism $h:\theta^{-1}(W)\times\C^n\to\V|_{\theta^{-1}(W)}$ for some $n\in\N$. Let $g\in C_0(\theta^{-1}(W))\subset C_0(U)$ be nonzero, and let $v$ be any nonzero vector in $\C^n$. Then $h(\cdot,g(\cdot)v)$ lies in $\E$, and there is $x\in\theta^{-1}(W)$ such that $\varphi_\E(f)h(x,g(x)v)=h(x,g(x)v)f\circ\theta(x)\neq0$. Hence 
\begin{align*}
    \ker \varphi_\E=\{f\in C_0(X):f|_V=0\}=\{f\in C_0(X):f|_{\overline{V}}=0\}=C_0(X\backslash\overline{V}),
\end{align*}
which implies 
\begin{align}\label{eq:prop:bimodule2}
    (\ker\varphi_\E)^\perp=C_0(X\backslash\overline{V})^\perp=C_0(\interior(\overline{V})).
\end{align}
That $\varphi_\E^{-1}(\mathcal{K}(\E))$ is equal to $C_0(V)$ follows from Proposition \ref{cor:compacts for sigma fgp module}.
\end{proof}
\begin{lemma}\label{lem:line bundle bimodule}
    If $\V$ is a line bundle, meaning that all fibers of $\V$ have dimension one, then $\xi\langle\eta,\zeta\rangle_\E=\zeta\langle\eta,\xi\rangle_\E$ for all $\xi,\eta,\zeta\in\E$. In particular, $\E$ is a Hilbert $C_0(X)$-bimodule.
\end{lemma}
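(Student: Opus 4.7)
The plan is to first establish the pointwise identity $\xi\langle\eta,\zeta\rangle_\E=\zeta\langle\eta,\xi\rangle_\E$ and then derive the bimodule structure by invoking the reformulation discussed after Definition~\ref{def:bimodule}: a correspondence becomes a Hilbert bimodule whenever its structure map restricts to an isomorphism between Katsura's ideal and the compacts.

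To verify the identity, I would check it pointwise. Both sides lie in $\Gamma_0(\V,\theta)$ and vanish outside $U$ (the extended right multiplication is zero off $U$), so it suffices to evaluate at $x\in U$. Since $\dim\V_x=1$, I would choose a $\beta_x$-unit vector $e_x\in\V_x$, write $\xi(x)=s\,e_x$, $\eta(x)=t\,e_x$, $\zeta(x)=r\,e_x$ for scalars $s,t,r\in\C$, and compute that both sides equal $s\overline{t}r\cdot e_x$. The point is that in a one-dimensional fiber the computation reduces entirely to commutativity of scalar multiplication in $\C$.

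For the ``in particular'' statement, it suffices by the discussion at the end of Section~\ref{sec:hilbert bimodules} to show that $\varphi_\E|_{J_\E}:J_\E\to\K(\E)$ is an isomorphism; the left action is then $a\cdot\xi=\varphi_\E(a)\xi$, the left inner product is ${}_\E\langle\xi,\eta\rangle=\varphi_\E^{-1}(\theta_{\xi,\eta})$, and the imprimitivity identity ${}_\E\langle\xi,\eta\rangle\zeta=\xi\langle\eta,\zeta\rangle_\E$ is automatic from the very definition of $\theta_{\xi,\eta}$. By Lemma~\ref{lem:katsura ideal}, $J_\E=C_0(V)$. Since $\V$ is a line bundle, each fiber $\E_x$ is either zero (for $x\notin U$) or one-dimensional (for $x\in U$), so Proposition~\ref{cor:compacts for sigma fgp module} identifies $\K(\E)$ with the continuous $C_0(X)$-algebra having those fibers, namely $C_0(U)$. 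Under these identifications, $\varphi_\E|_{C_0(V)}$ becomes the pull-back $f\mapsto f\circ\theta$, which is an isomorphism $C_0(V)\to C_0(U)$ since $\theta:U\to V$ is a homeomorphism.

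The main obstacle is not algebraic but bookkeeping: one must carefully combine Lemma~\ref{lem:katsura ideal}, Proposition~\ref{cor:compacts for sigma fgp module}, and the line bundle hypothesis to verify that $\varphi_\E|_{J_\E}$ surjects onto $\K(\E)$. Once that is in hand, both the pointwise identity and the bimodule conclusion drop out cleanly.
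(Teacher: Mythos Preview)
Your proposal is correct, but both halves take a different route from the paper.

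For the identity $\xi\langle\eta,\zeta\rangle_\E=\zeta\langle\eta,\xi\rangle_\E$, the paper computes in local trivializations: it writes $\xi,\eta,\zeta$ in three different charts, expresses the inner products via transition functions $g_{ij}$, and uses the cocycle relation $g_{ki}g_{jk}=g_{ji}$ (which is where one-dimensionality enters, since the $g_{ij}$ are scalars and commute). Your bare-hands fiberwise check is more elementary and avoids the atlas bookkeeping entirely; it makes transparent that the identity is nothing more than commutativity of $\C$.

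For the ``in particular'' clause, the paper (in the subsequent Proposition~\ref{prop:line bundle bimodule}) goes the opposite way to you: rather than identifying $\K(\E)$ abstractly, it \emph{uses} the identity just proved to exhibit a preimage of every rank-one operator, via
\[
\varphi_\E\bigl(\langle\eta,\xi\rangle_\E\circ\theta^{-1}\bigr)\zeta
=\zeta\langle\eta,\xi\rangle_\E
=\xi\langle\eta,\zeta\rangle_\E
=\theta_{\xi,\eta}\zeta,
\]
which simultaneously proves surjectivity of $\varphi_\E|_{J_\E}$ and gives the explicit left inner product ${}_\E\langle\xi,\eta\rangle=\langle\eta,\xi\rangle_\E\circ\theta^{-1}$. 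Your route through Proposition~\ref{cor:compacts for sigma fgp module} is valid but note that the proposition only tells you $\K(\E)$ is a continuous $C_0(X)$-algebra with one-dimensional fibers over $U$; the further identification with $C_0(U)$ needs a short extra argument (a continuous field with fibres $\C$ is trivial since $\aut(\C)$ is trivial). The paper's approach buys you the explicit formula for free; yours keeps the two statements of the lemma logically independent.
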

\begin{proof}
    The proof is basically the same as the proof of \cite[Proposition 3.5]{adamo2023}. Let $\{U_i,h_i\}_{i\in I}$ be an atlas for $\V$ with transition functions $g_{ij}$. Choose $i,j$ and $k$ in $I$ such that $U_i\cap U_j\cap U_k\neq\emptyset$. Define sections $\xi(x)\coloneqq h_i(x,r(x))$, $\eta(x)=h_j(x,s(x))$ and $\zeta(x)=h_k(x,t(x))$ for continuous functions $r\in C_0(U_i)$, $s\in C_0(U_j)$ and $t\in C_0(U_k)$. Take $x\in U_i\cap U_j\cap U_k$. We have 
    \[\langle\eta,\xi\rangle_\E(x)=\langle h_j^{-1}(\eta(x)),h_j^{-1}(\xi(x))\rangle_\C=\langle s(x),g_{ji}(x)r(x)\rangle_\C=\overline{s(x)}g_{ji}r(x)\]
    and hence
    \[\zeta\langle\eta,\xi\rangle_\E(x)=h_k(x,\overline{s(x)}g_{ji}r(x)t(x)).\]
    Swapping the roles of $\xi$ and $\zeta$ we obtain
    \begin{align*}
        \xi\langle\eta,\zeta\rangle_\E(x)&=h_i(x,\overline{s(x)}g_{jk}r(x)t(x))=h_k(x,g_{ki}\overline{s(x)}g_{jk}r(x)t(x))\\&=h_k(x,\overline{s(x)}g_{ji}r(x)t(x))=\zeta\langle\eta,\xi\rangle_\E(x). 
    \end{align*}
   By Lemma \ref{lem:frame vector bundle} and Lemma \ref{lem:inductive limit exhaustion by compact sets} it is enough to consider such sections.
\end{proof}
\begin{proposition}\label{prop:line bundle bimodule}
    We have $\varphi_\E(C_0(V))=\K(\E)$ if and only if $\V$ is a line bundle. In particular $\E$ is a $C_0(X)$-Hilbert bimodule with left inner product 
    \[{}_\E\langle\xi,\eta\rangle=\langle\eta,\xi\rangle_\E\circ\theta^{-1},\quad\text{for all }\xi,\eta\in\E,\] if and only if $\V$ is a line bundle.
\end{proposition}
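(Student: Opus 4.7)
My plan is to reduce the first assertion to a fiberwise analysis of the inclusion $\varphi_\E(C_0(V))\subseteq\K(\E)$, which already holds by Lemma \ref{lem:katsura ideal}. I would first observe that $\varphi_\E|_{C_0(V)}$ is automatically injective, because the characterisation of $\ker\varphi_\E$ from Lemma \ref{lem:katsura ideal} forces $\ker\varphi_\E\cap C_0(V)=0$. By Proposition \ref{cor:compacts for sigma fgp module}, $\K(\E)$ is a continuous $C_0(X)$-algebra whose fiber at $x\in U$ is $\K(\E_x)\cong M_{n_x}$, where $n_x$ is the rank of $\V$ at $x$, and which vanishes for $x\notin U$. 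Since $\varphi_\E(f)$ acts on $\E_x$ as scalar multiplication by $f(\theta(x))$, its image in the fiber $\K(\E)_x$ is always contained in $\C\cdot I_{\E_x}$, which exhausts $\K(\E_x)\cong M_{n_x}$ if and only if $n_x=1$.

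In the line bundle case I would then identify $\K(\E)\cong C_0(U)$ as $C_0(X)$-algebras, under which isomorphism $\varphi_\E$ becomes the pullback $f\mapsto f\circ\theta$. Because $\theta\colon U\to V$ is a homeomorphism this yields a bijection $C_0(V)\to C_0(U)$, so $\varphi_\E(C_0(V))=\K(\E)$. For the converse direction, given $x_0\in U$ with $n_{x_0}\geq 2$, I would use a local trivialisation together with a bump function to produce sections $\xi,\eta\in\E$ whose values at $x_0$ are nonzero and orthogonal. The rank-one operator $\theta_{\xi,\eta}$ then has nonscalar image in $\K(\E)_{x_0}$ and thus lies outside $\varphi_\E(C_0(V))$, whose fiberwise image at $x_0$ consists only of scalars.

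The bimodule part would then follow from the dictionary recalled in Section \ref{sec:hilbert bimodules}: $\E$ is a Hilbert $C_0(X)$-bimodule exactly when $\varphi_\E$ restricts to a $\ast$-isomorphism $J_\E\to\K(\E)$, and combining injectivity of $\varphi_\E|_{C_0(V)}$ with the identity $J_\E=C_0(V)$ from Lemma \ref{lem:katsura ideal} reduces this to the surjectivity already characterised. To derive the explicit formula I would invoke Lemma \ref{lem:line bundle bimodule} to rewrite $\theta_{\xi,\eta}\zeta=\xi\langle\eta,\zeta\rangle_\E=\zeta\langle\eta,\xi\rangle_\E$, and then solve $\varphi_\E(f)\zeta=\zeta(f\circ\theta)=\zeta\langle\eta,\xi\rangle_\E$ for all $\zeta$; this forces $f\circ\theta=\langle\eta,\xi\rangle_\E$ on $U$, whence ${}_\E\langle\xi,\eta\rangle=\langle\eta,\xi\rangle_\E\circ\theta^{-1}$ as claimed.

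The main obstacle I anticipate is upgrading the fiberwise scalar description to genuine surjectivity of $\varphi_\E|_{C_0(V)}$ onto $\K(\E)$ in the line bundle case. I plan to sidestep this by exhibiting the explicit identification $\K(\E)\cong C_0(U)$ and reading off surjectivity directly from the fact that $\theta$ is a homeomorphism, thereby avoiding any delicate continuous-field or approximation argument.
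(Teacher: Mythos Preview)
Your proposal is correct and follows essentially the same route as the paper: both directions rest on the fiberwise dichotomy scalar versus nonscalar, Lemma~\ref{lem:katsura ideal} supplies $J_\E=C_0(V)$ and injectivity, and Lemma~\ref{lem:line bundle bimodule} yields the explicit left inner product. The only cosmetic differences are that for surjectivity the paper exhibits $\langle\eta,\xi\rangle_\E\circ\theta^{-1}$ directly as the $\varphi_\E$-preimage of each rank-one operator (rather than passing through a global identification $\K(\E)\cong C_0(U)$), and for the obstruction it builds a compact operator from an arbitrary nonscalar matrix in a chart rather than a rank-one operator with orthogonal section values.
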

\begin{proof}
Assume that there exists $x\in X$ such that $p^{-1}(x)$ has dimension greater than one. There exists a chart $(W,h)$ such that $x$ lies in $W$, where $h:W\times\C^n\to p^{-1}(W)$ with $n>1$. Take a matrix $T\in M_n(\C)$ which is not a multiple of the identity. Let $\Tilde{W}$ be an open neighborhood of $x$ contained in $W$, and let $\chi$ be a continuous function which is equal to one on $\Tilde{W}$ and vanishes outside of $W$. Let $t$ be the map on $\E$ that maps a section $\xi$ to the section $(t\xi)(y)=h(y,Th^{-1}(\xi(y)))$. One can show that $t$ is well defined and an element of $\calL(\E)$. Proposition \ref{cor:compacts for sigma fgp module} shows that $t$ lies in $\K(\E)$. However $t$ clearly does not lie in $\varphi_\E(C_0(V))$.

Now we show the other direction. By Lemma \ref{lem:katsura ideal}, Katsura's ideal $J_\E$ is equal to $C_0(V)$, and hence $\varphi_\E(C_0(V))$ is contained in $\K(\E)$. It is left to show that the restriction of $\varphi_\E$ to $J_\E$ is surjective onto $\K(\E)$. If $\V$ is a line bundle then we have $\xi\langle\eta,\zeta\rangle_\E=\zeta\langle\eta,\xi\rangle_\E$ for all $\xi,\eta,\zeta\in\E$ by Lemma \ref{lem:line bundle bimodule}. This implies 
\[\varphi_\E(\langle\eta,\xi\rangle_\E\circ\theta^{-1})\zeta=\zeta\langle\eta,\xi\rangle_\E=\xi\langle\eta,\zeta\rangle_\E=\theta_{\xi,\eta}\zeta\] and hence $\varphi_\E^{-1}(\theta_{\xi,\eta})=\langle\eta,\xi\rangle_\E\circ\theta^{-1}$. Since the rank-one operators densely span $\K(\E)$ we obtain $\varphi_\E(C_0(V))=\K(\E)$. The rest of the statement follows from the discussion following Definition~\ref{def:bimodule}.
\end{proof}
\begin{example}
    Let $\alpha$ be a homeomorphism. Take the trivial line bundle over $X$, namely $\V=(X\times\C,p,X)$ where $p$ is the projection onto the second factor. Then $\Gamma_0(\V,\alpha)$ is exactly the $C^*$-correspondence from Example \ref{ex:homomorphisms as correspondences}, with $A=B=C_0(X)$ and $\phi$ the $\ast$-homomorphism on $C_0(X)$ induced by $\alpha$. The Cuntz--Pimsner algebra $\CP(\Gamma(\V,\alpha))$ is isomorphic to the crossed product $C_0(X)\ltimes_\alpha\Z$. It instead of $\alpha$ we take a partial automorphism $\theta:U\to V$ and consider the trivial line bundle over $U$, then the Cuntz--Pimsner algebra is isomorphic to the partial crossed product $C_0(X)\ltimes_\theta\Z$, see \cite[Proposition 2.23]{Muhly:1998}. 
    \end{example}
    \begin{example}\label{ex:cuntz algebra from Cuntz--Pimsner algebra}
        Take the space $X$ to be a single point $\{\mathrm{pt}\}$. Then the partial automorphism as well as the vector bundle are necessarily trivial. Consider the trivial vector bundle of rank $n$ over $X=\{\mathrm{pt}\}$, namely $\V=(X\times\C^n,p,X)$. Then a frame for the module $\Gamma(\V)$ is given by an orthonormal basis of $\C^n$. Under the universal covariant representation the basis vectors give rise to $n$ commuting isometries whose range projections sum up to $1$. Hence the Cuntz--Pimsner algebra is isomorphic to the Cuntz algebra $\CP_n$. 
    \end{example}
    \begin{example}\label{ex:continuous field of cuntz algebras}
        Take a locally compact second countable Hausdorff space $X$ and a vector bundle $\V$ over $X$. Consider the trivial action $\id_X$. This clearly generalizes Example \ref{ex:cuntz algebra from Cuntz--Pimsner algebra}. By \cite[Proposition 2]{Vasselli:2005} the Cuntz--Pimsner algebra $\CP(\E)$ is a continuous field of Cuntz algebras. This type of algebra was also studied in \cite{Dadarlat:2012}.
    \end{example}

    At the end of this section we prove a technical result which will be useful in the proofs of Lemmas \ref{lem:conditional expectation approximation} and \ref{lem:changing order}.

    \begin{proposition}\label{prop:tensor product fullness}
   Let $\E = \Gamma_0(\V,\theta)$ and  $\{D_{n}\}_{n \in \mathbb{Z}}$ denote the open subsets defining the partial action as in Example~\ref{ex:partial automorphism}. We have $\langle\E^{\otimes n},\E^{\otimes n}\rangle_{\E^{\otimes n}}=C_0(D_{-n})$ for all $n\in\N$. 
\end{proposition}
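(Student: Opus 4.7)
The plan is to proceed by induction on $n$. The base case $n=1$ follows directly from Remark \ref{rem:sigma fp module is full}, which says that $\Gamma_0(\V)$ is full as a $C_0(U)$-module; since the right $C_0(X)$-action was extended by zero outside of $U = D_{-1}$, one obtains $\langle \E,\E\rangle_\E = C_0(D_{-1})$.

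For the inductive step, I would write $\E^{\otimes(n+1)} = \E^{\otimes n} \otimes_{C_0(X)} \E$ and unfold the tensor product inner product. Using the explicit formula $\varphi_\E(f)\eta = \eta \cdot (f\circ\theta)$ together with right $C_0(X)$-linearity of the inner product on $\E$, one computes
\[
\langle \xi \otimes \eta,\, \xi' \otimes \eta'\rangle_{\E^{\otimes(n+1)}} \;=\; \langle \eta,\eta'\rangle_\E \cdot \bigl(\langle \xi,\xi'\rangle_{\E^{\otimes n}} \circ \theta\bigr)
\]
for $\xi,\xi' \in \E^{\otimes n}$ and $\eta,\eta' \in \E$, with the convention that $g\circ\theta$ denotes the function on $X$ that equals $g\circ\theta$ on $U$ and vanishes on $X\setminus U$. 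By the inductive hypothesis applied to $\langle\xi,\xi'\rangle_{\E^{\otimes n}}$ and the base case applied to $\langle\eta,\eta'\rangle_\E$, these expressions span densely the set $C_0(D_{-1}) \cdot \{f\circ\theta : f\in C_0(D_{-n})\}$.

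The key point is then to identify $\{f\circ\theta : f\in C_0(D_{-n})\}$ with $C_0(D_{-(n+1)})$. For $f\in C_0(D_{-n})$, the extended function $f\circ\theta$ is supported on $\{x\in U : \theta(x)\in D_{-n}\} = \theta^{-1}(V\cap D_{-n})$, and by Remark \ref{rem:domains partial action} applied with $g=-1$ and $h=-n$, this set is precisely $D_{-1}\cap D_{-(n+1)} = D_{-(n+1)}$. Since $\theta$ restricts to a homeomorphism from $D_{-(n+1)}$ onto $V\cap D_{-n}$, and since restriction to the open subset $V\cap D_{-n}\subset D_{-n}$ together with extension by zero gives a surjection $C_0(D_{-n}) \twoheadrightarrow C_0(V\cap D_{-n})$, the assignment $f\mapsto f\circ\theta$ maps $C_0(D_{-n})$ onto $C_0(D_{-(n+1)})$. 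Since $D_{-(n+1)}\subset D_{-1}$, the ideal $C_0(D_{-(n+1)})$ is absorbed on the left by $C_0(D_{-1})$, and the product $C_0(D_{-(n+1)})\cdot C_0(D_{-(n+1)}) = C_0(D_{-(n+1)})$ shows that the closed linear span of these inner products is all of $C_0(D_{-(n+1)})$.

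The main obstacle is purely bookkeeping: correctly identifying $\theta^{-1}(V\cap D_{-n})$ with $D_{-(n+1)}$ via the general partial-action identity from Remark \ref{rem:domains partial action}. The rest reduces to the definition of the tensor-product inner product and standard facts about ideals of continuous functions vanishing at infinity on open subsets.
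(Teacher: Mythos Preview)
Your proof is correct and follows essentially the same approach as the paper: induction on $n$, with the base case coming from Remark~\ref{rem:sigma fp module is full}, and the inductive step carried out by unfolding the tensor-product inner product using $\varphi_\E(g)\eta' = \eta'\,(g\circ\theta)$ and then identifying $\theta^{-1}(V\cap D_{-n})$ with $D_{-(n+1)}$. The only cosmetic difference is that the paper inserts an auxiliary $f\in C_0(D_1)$ to keep the expression $g\circ\theta$ well defined, whereas you absorb this issue by multiplying against $\langle\eta,\eta'\rangle_\E\in C_0(U)$; both devices serve the same purpose.
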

\begin{proof}
    We show the claim inductively. The case $n=1$ follows from Remark \ref{rem:sigma fp module is full}. Now take $n>1$ and assume that the claim holds for $n-1$. Take $\xi_1,\eta_1$ in $\E^{\otimes(n-1)}$ and $\xi_2,\eta_2$ in $\E$. Also take $f$ in $C_0(D_{1})$. To ease notation we will write $\langle\cdot,\cdot\rangle_n$ instead of $\langle\cdot,\cdot\rangle_{\E^{\otimes n}}$. We calculate
    \begin{align*}
        \langle\xi_1\otimes\xi_2,\eta_1\otimes\varphi_\E(f)\eta_2\rangle_{n}=\langle\xi_2,\varphi_\E(\langle\xi_1,\eta_1\rangle_{n-1}f)\eta_2\rangle_\E=\langle\xi_2,\eta_2\rangle_\E(\langle\xi_1,\eta_1\rangle_{n-1}f\circ\theta).
    \end{align*}
    Note that $f$ was only needed to make everything well defined, and can be introduced without loss of generality. By the induction hypothesis $\langle\xi_1,\eta_1\rangle_{n-1}$ is contained in $C_0(D_{1-n})$. Hence the open support of $\langle\xi_1,\eta_1\rangle_{n-1}f\circ\theta$ is contained in $\theta^{-1}(D_{1-n}\cap V)$, which is equal to $D_{-n}$. This shows that $\langle\E^{\otimes n},\E^{\otimes n}\rangle_n$ is contained in $C_0(D_{-n})$. The other inclusion follows from the above calculation and Remark \ref{rem:sigma fp module is full}.
\end{proof}
\subsection{Grading and the fixed point algebra}\label{sec:grading and the fixed point algebra}
The goal of the next two sections is to understand the structure of the fixed point algebra of the Cuntz--Pimsner algebra associated to a partial automorphism twisted by a vector bundle. We start out in this section by reviewing some of the general theory concerning gradings and fixed point algebras of Cuntz--Pimsner algebras. 
 
Let $G$ be a discrete abelian group. Then a $C^*$-algebra $A$ is \emph{$G$-graded} if there exists a collection of linearly independent subspaces $\{A^g\}_{g\in G}$ of $A$ such that the direct sum over all $A^g$ is dense in $A$, and such that $A^gA^h\subset A^{gh}$ as well as $(A^g)^*\subset A^{g^{-1}}$ hold for all $g\in G$, see \cite[Definition 16.2]{Exel:2017}. If there also exists a conditional expectation onto $A^1$ vanishing on $A^g$ for all $g\neq 1$, then we speak of a \emph{topological grading}, see \cite[Definition 19.2]{Exel:2017}.

Write $\widehat{G}$ for the Pontryagin dual of $G$. An action of the compact abelian group $\widehat{G}$ on a $C^*$-algebra $A$ induces a topological grading of $A$ by $G$.  For a proof of this well known result, see \cite[Proposition A4]{Sehnem:2019}.  Thus the existence of the gauge action $\gamma : \mathbb{T} \curvearrowright \CP(\E)$ on the Cuntz--Pimsner algebra $\CP(\E)$ implies that $\CP(\E)$ is $\Z$-graded. The subspace of $\CP(\E)$ associated to an integer $k$ is given by 
 \[\CP(\E)^k\coloneqq\{a\in\CP(\E):\gamma_z(a)=z^ka\text{ for all }z\in\mathbb{T}\}.\]   

 A special role is played by the subspace $\CP(\E)^0$ associated to $0\in\Z$. It is  a closed subalgebra of $\CP(\E)$, which can easily be seen from the definition. We call $\CP(\E)^0$ the \emph{fixed point algebra} of the gauge action. Since the grading on the Cuntz--Pimsner algebra is topological, there exists a conditional expectation $\Phi$ from $\CP(\E)$ onto the fixed point algebra. This conditional expectation is faithful \cite[Section 4]{katsura:2004_2}. 
 
 \begin{definition}[{\cite[Definition 2.5]{Katsura:2004}}]\label{def:rep of tensor product}
Let $(\pi,t)$ be a representation of $\E$. We set $t^0=\pi$ and $t^1=t$. For $n=2,3,...$ we define a linear map $t^n:\E^{\otimes n}\to C^*(\pi,t)$ by $t^n(\xi\otimes\eta)=t(\xi)t^{n-1}(\eta)$ for $\xi\in\E$ and $\eta\in\E^{\otimes(n-1)}$. 
\end{definition}
For every $n\in\N$ the map $t^n$ is well defined and $(\pi,t^n)$ is a representation of $\E^{\otimes n}$. We obtain $\ast$-homomorphisms $\psi_{t^n}$ from $\K(\E^{\otimes n})$ to $C^*(\pi,t)$. On rank-one operators $\psi_{t^n}$ is given by $\psi_{t^n}(\theta_{\xi,\eta})=t^n(\xi)t^n(\eta)^*$ for $\xi$ and $\eta$ in $\E^{\otimes n}$. If $(\pi,t)$ is injective then $t^n$ and $\psi_{t^n}$ are isometric. This is in particular the case when $(\pi,t)$ is the universal representation $(\pi_\E,t_\E)$. To ease notation we write $\psi_n$ for $\psi_{t_\E^n}$. 

For $n\in\N$ we set 
\[
B_n\coloneqq\psi_n(\K(\E^{\otimes n})).
\] Then $B_n$ is a subalgebra of $\CP(\E)$. Define the subalgebra $B_{[m,n]}\subset\CP(\E)$ for $m,n\in\N$ with $m\leq n$ by 
\[
B_{[m,n]}=B_m+B_{m+1}+...+B_n.
\]
Since $\psi_n$ is isometric, it is clear that $B_n$ is isomorphic to $\K(\E^{\otimes n})$ as $C^*$ algebras. Furthermore $B_{[k,n]}$ is an ideal of $B_{[m,n]}$ whenever $m\leq k\leq n$, see \cite{Katsura:2004}.
The inductive limit of the increasing sequence $\{B_{[m,n]}\}_{n=m}^\infty$ is denoted by $B_{[m,\infty)}$. The $C^*$-algebra 
\[
B_{[0,\infty)} = \lim_{n \to \infty} B_{[0,n]}
\]
is called the \emph{core} of $\CP(\E)$. 

\begin{proposition}[{\cite[Proposition 5.7]{Katsura:2004}}]\label{prop:core and fixed point algebra}
The core $B_{[0,\infty)}$ coincides with $\CP(\E)^0$, the fixed point algebra of the gauge action. 
\end{proposition}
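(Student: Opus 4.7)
The plan is to establish both inclusions in the claimed equality separately, with the easy direction coming from the definition of the gauge action, and the harder direction coming from the existence of the faithful conditional expectation $\Phi$ onto $\CP(\E)^0$ combined with a normal-form argument for elements of $\CP(\E)$.

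For $B_{[0,\infty)} \subseteq \CP(\E)^0$, I would observe that for $\xi, \eta \in \E^{\otimes n}$ the rank-one operator $\theta_{\xi,\eta}$ maps under $\psi_n$ to $t_\E^n(\xi) t_\E^n(\eta)^*$, and since $\gamma_z \circ t_\E = z\, t_\E$, it follows that $\gamma_z(t_\E^n(\xi) t_\E^n(\eta)^*) = z^n \overline{z}^n t_\E^n(\xi) t_\E^n(\eta)^* = t_\E^n(\xi) t_\E^n(\eta)^*$ for every $z \in \mathbb{T}$. As such elements densely span $B_n$ and $\CP(\E)^0$ is norm-closed, I conclude $B_n \subseteq \CP(\E)^0$ for every $n \geq 0$, hence also $B_{[0,n]} \subseteq \CP(\E)^0$, and finally $B_{[0,\infty)} \subseteq \CP(\E)^0$ by passing to the inductive limit.

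For the reverse inclusion, the plan is to use the conditional expectation $\Phi : \CP(\E) \to \CP(\E)^0$ given by $\Phi(a) = \int_{\mathbb{T}} \gamma_z(a)\, dz$. The key technical point is that $\CP(\E)$ is the closed linear span of monomials of the form $t_\E^n(\xi) t_\E^m(\eta)^*$ with $n, m \geq 0$, $\xi \in \E^{\otimes n}$, and $\eta \in \E^{\otimes m}$, using the convention $\E^{\otimes 0} = A$ and $t_\E^0 = \pi_\E$. Granting this, given $a \in \CP(\E)^0$, I would approximate $a$ in norm by finite linear combinations $a_\varepsilon$ of such monomials. Since $\Phi(a) = a$ and $\Phi$ is contractive, $\Phi(a_\varepsilon) \to a$. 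A direct calculation from $\gamma_z(t_\E^n(\xi) t_\E^m(\eta)^*) = z^{n-m} t_\E^n(\xi) t_\E^m(\eta)^*$ shows that $\Phi$ annihilates monomials with $n \neq m$ and fixes those with $n = m$. When $n = m$, the surviving term $t_\E^n(\xi) t_\E^n(\eta)^*$ equals $\psi_n(\theta_{\xi,\eta})$ by Definition \ref{def:rep of tensor product}, and so lies in $B_n \subseteq B_{[0,\infty)}$. Hence each $\Phi(a_\varepsilon)$ lies in $B_{[0,\infty)}$, and taking the limit gives $a \in B_{[0,\infty)}$.

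The main obstacle is establishing the normal form used above, i.e., that the monomials $t_\E^n(\xi) t_\E^m(\eta)^*$ densely span $\CP(\E)$. This would be proved by iteratively applying the representation relations $\pi_\E(a) t_\E(\xi) = t_\E(\varphi_\E(a)\xi)$, $t_\E(\xi)\pi_\E(a) = t_\E(\xi a)$, and $t_\E(\xi)^* t_\E(\eta) = \pi_\E(\langle \xi,\eta \rangle_\E)$ to rewrite any word in the generating set $\pi_\E(A) \cup t_\E(\E) \cup t_\E(\E)^*$ so that all $t_\E^*$ factors are pushed to the right of all $t_\E$ factors, with the resulting $\pi_\E$ factors absorbed into adjacent tensor components via the identifications $\E \otimes_A A \cong \E$ and $A \otimes_A \E \cong \E$. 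Although this rewriting is routine, the bookkeeping is the most delicate part of the argument; once the normal form is available the rest of the proof proceeds as described.
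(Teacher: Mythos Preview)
Your argument is correct and follows the standard route to this result. Note, however, that the paper does not supply its own proof of this proposition: it is simply quoted as \cite[Proposition 5.7]{Katsura:2004}, and the surrounding text only remarks that the linear span of monomials $t_\E^n(\xi)t_\E^m(\eta)^*$ is dense in $\CP(\E)^k$ for $n-m=k$, referring back to Katsura for the details. Your proof sketch is precisely the argument Katsura gives, so there is nothing to compare---you have reconstructed the cited proof.
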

It follows from the definition of the gauge action that $t^n_\E(\xi)t^m_\E(\eta)^*$ lies in $\CP(\E)^k$ if and only if $n-m=k$, for every $\xi\in\E^{\otimes n}$ and $\eta\in\E^{\otimes m}$. The linear span of such elements is dense in $\CP(\E)^k$; this can be shown in the same way as \cite[Proposition 5.7]{Katsura:2004}.

If $\E$ is a Hilbert $A$-bimodule, then 
\[
    t_\E(\xi)t_\E(\eta)^*=\psi_{t_\E}(\theta_{\xi,\eta})=\psi_{t_\E}(\varphi_\E({}_\E\langle\xi,\eta\rangle))=\pi_\E({}_\E\langle\xi,\eta\rangle)
\]
for all $\xi,\eta\in\E$. In particular the fixed point algebra is $\ast$-isomorphic to $A$, and $\CP(\E)^n$ and $\E^{\otimes n}$ are isomorphic as Hilbert $A$-bimodules, for all $n\in\Z\backslash\{0\}$.

We define the ideal $I_\E$ of $A$ to be  
\begin{equation} \label{eq:IE} I_\E\coloneqq\{a\in A:\pi_\E(a)\in \psi_t(\K(\E))\}.
\end{equation}
For every $n\in\N$ we define the subalgebra $B_n'$ of $B_n$ to be 
\begin{equation} \label{eq:Bn'} B_n'\coloneqq\psi_n(\K(\E^{\otimes n}I_\E)).
\end{equation}

\begin{proposition}[{\cite[Propositions 5.9 and 5.12]{Katsura:2004}}]\label{prop:commutative diagram for cores}
For every $n\in\N$ we have $B_n\cap B_{n+1}=B_n'$ and $B_{[0,n]}\cap B_{n+1}=B_n'$. The diagram 
\[\begin{tikzcd}
	0 & {B_n'} & {B_{[0,n]}} & {B_{[0,n]}/B_n'} & 0 \\
	0 & {B_{n+1}} & {B_{[0,n+1]}} & {B_{[0,n]}/B_n'} & 0
	\arrow[from=1-1, to=1-2]
	\arrow[from=1-2, to=1-3]
	\arrow[from=1-3, to=1-4]
	\arrow[from=1-4, to=1-5]
	\arrow[from=2-1, to=2-2]
	\arrow[from=2-2, to=2-3]
	\arrow[from=2-3, to=2-4]
	\arrow[from=2-4, to=2-5]
	\arrow[no head, from=1-4, to=2-4]
	\arrow[shift right, no head, from=1-4, to=2-4]
	\arrow[from=1-3, to=2-3]
	\arrow[from=1-2, to=2-2]
\end{tikzcd}\]
is commutative with exact rows, where all the maps are the obvious inclusion or quotient maps. 
\end{proposition}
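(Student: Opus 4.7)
The plan is to establish the two algebraic identities $B_n\cap B_{n+1}=B_n'$ and $B_{[0,n]}\cap B_{n+1}=B_n'$ first, since once they are in hand the commutativity of the diagram is tautological and exactness of the rows is routine. The top row is the defining short exact sequence of the quotient $B_{[0,n]}/B_n'$. The second identity lets us apply the second isomorphism theorem to get $B_{[0,n+1]}/B_{n+1}=(B_{[0,n]}+B_{n+1})/B_{n+1}\cong B_{[0,n]}/(B_{[0,n]}\cap B_{n+1})=B_{[0,n]}/B_n'$, which pins down the right-hand term of the bottom row and yields exactness. The vertical maps are the canonical inclusions, which makes the two squares commute by inspection.

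For the inclusion $B_n'\subseteq B_n\cap B_{n+1}$, I would exploit the covariance property of $(\pi_\E,t_\E)$. The algebra $B_n'$ is densely spanned by operators $\psi_n(\theta_{\xi\cdot a,\eta})$ with $\xi,\eta\in\E^{\otimes n}$ and $a\in I_\E$. Using the representation identity $t_\E^n(\xi\cdot a)=t_\E^n(\xi)\pi_\E(a)$, writing $\pi_\E(a)=\psi_{t_\E}(k)$ for some $k\in\K(\E)$ (which exists by the very definition of $I_\E$ in Equation~(\ref{eq:IE})), and approximating $k$ by finite sums $\sum_i\theta_{\mu_i,\nu_i}$, I get
\[\psi_n(\theta_{\xi\cdot a,\eta})=t_\E^n(\xi)\pi_\E(a)t_\E^n(\eta)^*=\lim_N\sum_{i\le N}t_\E^{n+1}(\xi\otimes\mu_i)\,t_\E^{n+1}(\eta\otimes\nu_i)^*,\]
which exhibits the operator as a norm limit of elements of $B_{n+1}$.

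The reverse inclusion $B_n\cap B_{n+1}\subseteq B_n'$ will be the main obstacle. My approach is to use the isometric isomorphisms $\psi_n\colon \K(\E^{\otimes n})\to B_n$ and $\psi_{n+1}\colon \K(\E^{\otimes(n+1)})\to B_{n+1}$ to transfer the problem to the level of compact operators on tensor products. Under the natural identification $\K(\E^{\otimes(n+1)})\cong \K(\E^{\otimes n})\otimes_A\K(\E)$ mediated by the left action $\varphi_\E$, an element which also lies in $B_n$ must, on the last tensor factor, come from $\varphi_\E(A)\cap\K(\E)$; combined with the injectivity of $\pi_\E$ this forces the coefficient to lie in $I_\E$, and hence the operator to lie in $\psi_n(\K(\E^{\otimes n}I_\E))=B_n'$. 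Making this rigorous is delicate and essentially reproduces Katsura's argument in \cite[Proposition 5.9]{Katsura:2004}.

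Once the first identity is established, the second identity $B_{[0,n]}\cap B_{n+1}=B_n'$ follows by an induction on $n$. The inclusion $B_n'\subseteq B_{[0,n]}\cap B_{n+1}$ is immediate from $B_n'\subseteq B_n\subseteq B_{[0,n]}$ and the first identity. For the reverse inclusion, write $b\in B_{[0,n]}\cap B_{n+1}$ as $b_0+b_1+\cdots+b_n$ with $b_k\in B_k$, and use the fact (provable by induction using the same covariance argument as above) that each $B_k'$ is an ideal in $B_{[k,n+1]}$, together with the nested inclusions $B_k'\subseteq B_{k+1}\subseteq B_{[0,n+1]}$, to successively absorb lower terms into $B_n'$. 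The commutative diagram and exactness then fall out as described in the first paragraph.
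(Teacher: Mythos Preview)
The paper does not give its own proof of this proposition: it is stated with attribution to \cite[Propositions 5.9 and 5.12]{Katsura:2004} and used as a black box. There is therefore no in-paper argument to compare your proposal against.

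That said, your outline is broadly the right shape and tracks Katsura's original argument, which you yourself acknowledge. A couple of points are worth flagging. First, the sentence ``Under the natural identification $\K(\E^{\otimes(n+1)})\cong \K(\E^{\otimes n})\otimes_A\K(\E)$'' is not literally correct as stated; there is no such isomorphism of $C^*$-algebras in general, and the actual mechanism in Katsura's proof goes through the action of $\K(\E^{\otimes n})$ on $\E^{\otimes(n+1)}$ as adjointable operators via $k\mapsto k\otimes\id_\E$ together with careful bookkeeping of when such operators are compact. Second, your inductive argument for $B_{[0,n]}\cap B_{n+1}\subseteq B_n'$ is quite vague: the phrase ``successively absorb lower terms into $B_n'$'' hides the real work, which in Katsura's treatment proceeds by showing that the quotient $B_{[0,n+1]}/B_{n+1}$ is canonically isomorphic to $B_{[0,n]}/B_n'$ and then reading off the intersection from exactness, rather than by manipulating individual summands $b_0+\cdots+b_n$. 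Your overall strategy is sound, but as written several steps would need substantial expansion to become a proof.
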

\subsection{Fibered structure of the fixed point algebra}
In this section we investigate the fixed point algebra of the Cuntz--Pimsner algebra associated to a partial automorphism twisted by a vector bundle. We show that it is a $C_0(X)$-algebra, where $X$ is the base space of the vector bundle. We calculate the fibers. 

We write $\E$ for the Hilbert $C_0(X)$-module $\Gamma_0(\V,\theta)$, where $\theta:U\to V$ is a partial automorphism and $\V$ is a vector bundle over $U$. Section \ref{sec:Hilbert Czero modules} shows that both $\E$ and $\K(\E)$ naturally fiber over $U$, and hence over $X$: More precisely, the fiber of $\E$ at a point $x\in U$ is isomorphic to $\C^{d(x)}$, where $d(x)$ is the rank of the vector bundle at the point $x$. For $x\in X\backslash U$ the fiber is $\{0\}$, the trivial Hilbert space. The fiber of $\K(\E)$ at a point $x\in U$ is $\K(\E_x)\cong M_{d(x)}(\C)$, and at a point $x\in X\backslash U$ it is the trivial $C^*$-algebra, which we also denote by $\{0\}$. 

There is another $C_0(X)$-structure on $\E$ and $\K(\E)$ coming from the structure map, which will be more useful to us. To distinguish it from the $C_0(X)$-structure coming from right multiplication, we will denote the fiber of $\E$ over a point $x\in X$ by $\E_{\varphi,x}$. This fiber is equal to the quotient of $\E$ by the submodule $\varphi_\E(I_x)\E$, where we write $I_x$ for $C_0(X\backslash\{x\})$. Similarly we denote the fiber of $\K(\E)$ over a point $x\in X$ by $\K(\E)_{\varphi,x}$.

We can also consider the $n$-fold internal tensor product $\E^{\otimes n}$ and the compact operators $\K(\E^{\otimes n})$. On them the structure map $\varphi_{\E^{\otimes n}}$ induces a $C_0(X)$-structure as well. The fibers over a point $x\in D_n$ are given by 
\[(\E^{\otimes n})_{\varphi,x}\cong\E_{\theta^{-1}(x)}\otimes ... \otimes \E_{\theta^{-n}(x)},\]
and
\[ \K(\E^{\otimes n})_{\varphi,x}\cong M_{d(\theta^{-1}(x))}\otimes... M_{d(\theta^{-n}(x))},\]
where $d(\theta^{-j}(x))$ denotes the rank of the vector bundle at $d(\theta^{-j}(x))$. For $x\in X\backslash D_n$ the fibers are given by the trivial Hilbert space and the trivial $C^*$-algebra, respectively. 

    If $k\in\K(\E^{\otimes n})$ is a compact operator such that $k\otimes \id_\E\in\K(\E^{\otimes(n+1)})$, then $(k\otimes \id_\E)_{\varphi,x}$ is equal to $k_{x}\otimes \id_{\E_{\varphi,x}}$. This follows from the definition of the tensor product of vector spaces, and the fact that $(\id_\E)_{\varphi,x}$ equals $\id_{\E_{\varphi,x}}$. 

We now investigate the $C_0(X)$-structure of the fixed point algebra.

\begin{proposition}\label{prop:fixed point algebra is C(X) algebra}
Let $(\pi_\E, t_\E)$ be the universal covariant representation of $\E$. Then $\pi_\E$ maps $C_0(X)$ into the center of $\CP(\E)^0$. This makes $\CP(\E)^0$ a $C_0(X)$-algebra with structure map $\pi_\E$, and induces a $C_0(X)$-structure on the subalgebras $B_n$ and $B_{[0,n]}$.
 \end{proposition}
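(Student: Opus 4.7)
The plan is to verify each part of the statement in order: containment of $\pi_\E(C_0(X))$ in $\CP(\E)^0$, centrality, nondegeneracy (so we really get a $C_0(X)$-algebra), and then the restriction of this structure to $B_n$ and $B_{[0,n]}$.

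First, every element of $\pi_\E(C_0(X))$ is fixed by the gauge action, so $\pi_\E(C_0(X))\subset \CP(\E)^0$. By Proposition~\ref{prop:core and fixed point algebra} the fixed point algebra equals the closure of $\bigcup_{n}B_{[0,n]}$, and hence the closed linear span of $\bigcup_n B_n$. By continuity and linearity of multiplication, it therefore suffices to show that $\pi_\E(f)$ commutes with every $\psi_n(k)$ for $f\in C_0(X)$, $n\geq 0$, and $k\in\K(\E^{\otimes n})$. The case $n=0$ is trivial because $B_0=\pi_\E(C_0(X))$ is commutative. For $n\geq 1$, applying the covariance relation $\pi_\E(a)t_\E^n(\xi)=t_\E^n(\varphi_{\E^{\otimes n}}(a)\xi)$ and its adjoint to a rank-one operator yields
\[
\pi_\E(f)\psi_n(k)=\psi_n\bigl(\varphi_{\E^{\otimes n}}(f)\,k\bigr),\qquad
\psi_n(k)\pi_\E(f)=\psi_n\bigl(k\,\varphi_{\E^{\otimes n}}(f)\bigr),
\]
for every $k\in\K(\E^{\otimes n})$. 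Thus the problem reduces to showing that $\varphi_{\E^{\otimes n}}(f)$ lies in the center of $\K(\E^{\otimes n})$.

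I expect this last step to be the main technical point. By density it is enough to check on a rank-one operator $\theta_{\xi,\eta}$, and by the tensor factorisation $\varphi_{\E^{\otimes n}}(f)=\varphi_\E(f)\otimes\id_{\E^{\otimes(n-1)}}$ the issue further reduces to $n=1$. For $n=1$, the key observation is that $\varphi_\E(f)\xi=\xi(f\circ\theta)$ is just pointwise multiplication of the section $\xi$ by the bounded scalar function $f\circ\theta$ on $U$. Combined with sesquilinearity of the inner product and the fact that $\langle\eta,\zeta\rangle_\E\in C_0(U)$ by Remark~\ref{rem:sigma fp module is full} (so that products with $f\circ\theta$ remain in $C_0(X)$), a direct computation shows that both $\varphi_\E(f)\theta_{\xi,\eta}(\zeta)$ and $\theta_{\xi,\eta}\varphi_\E(f)(\zeta)$ equal $\xi\cdot\langle\eta,\zeta\rangle_\E\cdot(f\circ\theta)$, where commutativity of the scalar functions is what forces the two expressions to agree.

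For the $C_0(X)$-algebra structure it remains to check nondegeneracy of $\pi_\E$ on $\CP(\E)^0$. Taking an approximate unit $(e_\lambda)$ for $C_0(X)$, the first formula above gives $\pi_\E(e_\lambda)\psi_n(k)=\psi_n(\varphi_{\E^{\otimes n}}(e_\lambda)k)$, and it is enough to check that $\varphi_{\E^{\otimes n}}(e_\lambda)\xi\to\xi$ for every $\xi\in\E^{\otimes n}$. For elementary tensors this reduces to the $n=1$ assertion $\varphi_\E(e_\lambda)\xi_1\to\xi_1$, which holds because $\xi_1$ vanishes at infinity on $U$ while $e_\lambda\circ\theta\to 1$ uniformly on compact subsets of $U$; extension to all of $\E^{\otimes n}$ follows from the uniform bound $\|\varphi_{\E^{\otimes n}}(e_\lambda)\|\leq 1$ together with Proposition~\ref{prop:tensor product fullness} (the relevant inner products lie in $C_0(D_{-n})$, so a Cohen factorisation places every $\xi\in\E^{\otimes n}$ into the closure of elementary tensors). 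Density of $\bigcup_n B_{[0,n]}$ in $\CP(\E)^0$ then yields nondegeneracy. Finally, the two displayed formulas also imply $\pi_\E(f)B_n\subset B_n$ and $\pi_\E(f)B_{[0,n]}\subset B_{[0,n]}$, so the same centrality and nondegeneracy arguments show that the $C_0(X)$-algebra structure restricts to $B_n$ and $B_{[0,n]}$ without any extra work.
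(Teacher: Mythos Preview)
Your approach is essentially the paper's: both establish centrality by checking that $\pi_\E(f)$ commutes with the generators $t_\E^n(\xi)t_\E^n(\eta)^*$ of $B_n$, and both derive nondegeneracy from an approximate-unit argument. The paper carries out the commutation directly in $\CP(\E)$ via the identity $\varphi_{\E^{\otimes n}}(f)\xi = \xi\,(f\circ\theta^n)$, while you work inside $\K(\E^{\otimes n})$; these are equivalent.

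There is, however, one step that is not justified as written. The sentence ``by the tensor factorisation $\varphi_{\E^{\otimes n}}(f)=\varphi_\E(f)\otimes\id_{\E^{\otimes(n-1)}}$ the issue further reduces to $n=1$'' asserts that centrality of $\varphi_\E(f)$ in $\calL(\E)$ implies centrality of $\varphi_\E(f)\otimes\id$ in $\calL(\E\otimes_A\F)$. That implication is not automatic: compact operators on $\E\otimes_A\F$ do not factor as tensors, so knowing that $\varphi_\E(f)$ commutes with $\K(\E)$ says nothing directly about commutation with a general $\theta_{\xi,\eta}$ for $\xi,\eta\in\E\otimes_A\F$. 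What actually makes the reduction work is the specific formula $\varphi_\E(f)\xi_1=\xi_1(f\circ\theta)$: iterating it through the balancing relation $(\xi_1 a)\otimes\xi_2=\xi_1\otimes\varphi_\E(a)\xi_2$ gives
\[
\varphi_{\E^{\otimes n}}(f)(\xi_1\otimes\cdots\otimes\xi_n)=\xi_1\otimes\cdots\otimes\bigl(\xi_n(f\circ\theta^n)\bigr),
\]
so $\varphi_{\E^{\otimes n}}(f)$ is again right multiplication by a scalar function on $\E^{\otimes n}$, and then your $n=1$ computation goes through verbatim with $f\circ\theta$ replaced by $f\circ\theta^n$. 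This is exactly the identity the paper uses. Once you insert this one line, your proof is complete and matches the paper's.
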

\begin{proof} 
  To ease notation we write $\pi$ instead of $\pi_\E$, and $t$ instead of $t_\E$. It is clear that $\pi$ maps into the fixed point algebra. By definition of the structure map $\varphi_\E$, we have
\begin{align*}
    \pi(f)t^n(\xi)t^n(\eta)^*&=t^n(\varphi_{\E^{\otimes n}}(f)\xi)t^n(\eta)^*=t^n(\xi)t^n(\eta(f\circ\theta^n))^*=t^n(\xi)t^n(\varphi_{\E^{\otimes n}}(f)\eta)^*\\&=t^n(\xi)t^n(\eta)^*\pi(f)
\end{align*}
for all $\xi,\eta\in\E^{\otimes n}$ and all $f\in C_0(X)$. Note that the expression $f\circ\theta^n$ is not well defined on its own, but makes sense if multiplied from the right to an element $\eta\in\E^{\otimes n}$. For every $f$ in $C_0(X)$ and all $n\in\N$ we obtain that $\pi(f)$ commutes with all elements of $B_n$, and hence with all elements of $B_{[0,n]}$. By Proposition \ref{prop:core and fixed point algebra}, the fixed point algebra is the inductive limit of the $B_{[0,n]}$, and so we obtain that $\pi$ maps into the center of the fixed point algebra. 

If $(u_i)_{i\in I}$ is an approximate unit for $C_0(X)$ then $\xi u_i$ converges to $\xi$ for all $\xi\in\E$. We obtain that $\pi(u_i)$ is an approximate unit for $\CP(\E)^0$, and hence $\pi(C_0(X))\CP(\E)^0$ is dense in $\CP(\E)^0$. This shows that $\pi$ indeed turns the fixed point algebra into a $C_0(X)$-algebra.
\end{proof}
The situation is now the following: For every $n\in\N$ we have an isomorphism $\psi_n$ from $\K(\E^{\otimes n})$ to $B_n\subset\CP(\E)^0$. The structure map $\varphi_{\E^{\otimes n}}$ induces a $C_0(X)$-structure on $\K(\E^{\otimes n})$. On the other hand Proposition \ref{prop:fixed point algebra is C(X) algebra} shows that $\pi_\E$ induces a $C_0(X)$-structure on $B_n$. It is straightforward to see that $\psi_n$ is $C_0(X)$-linear with respect to these structures. This means that it is an isomorphism of $C_0(X)$-algebras, and in particular it induces isomorphisms $\psi_{n,x}$ on the fibers for every $x\in X$. Since we have already computed the fibers of $\K(\E^{\otimes n})$, this yields the fibers of the $B_n$. Most of the work will then consist in calculating the fibers of the subalgebras $B_{[0,n]}$. Once this is done,  we use that $\CP(\E)^0$ can be obtained as an inductive limit of the $B_{[0,n]}$. 

\begin{lemma}\label{lem:calculating ideal for core}
For the ideal $I_\E$ and the subalgebra $B_n'\subset B_n$ of \eqref{eq:IE} and \eqref{eq:Bn'} respectively, we have $I_\E=C_0(V)$ and $B_n'=\pi_\E(C_0(D_{n+1}))B_n$.
\end{lemma}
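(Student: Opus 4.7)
The plan is to treat the two equalities separately. The identity $I_\E = C_0(V)$ is essentially a consequence of Lemma \ref{lem:katsura ideal} combined with the fact that the universal covariant representation is injective, while $B_n' = \pi_\E(C_0(D_{n+1}))B_n$ is the more substantial claim and requires tracking how the structure map interacts with the domains of the partial action.

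For $I_\E = C_0(V)$: The inclusion $C_0(V) \subseteq I_\E$ is immediate from covariance of the universal representation. By Lemma \ref{lem:katsura ideal}, Katsura's ideal is $J_\E = C_0(V)$, so for $f \in C_0(V)$ covariance gives $\pi_\E(f) = \psi_{t_\E}(\varphi_\E(f)) \in \psi_{t_\E}(\K(\E))$. For the reverse inclusion, suppose $\pi_\E(a) = \psi_{t_\E}(k)$ for some $k \in \K(\E)$. Applying both sides to $t_\E(\xi)$ and using the representation identity $\pi_\E(a)t_\E(\xi) = t_\E(\varphi_\E(a)\xi)$ together with $\psi_{t_\E}(\theta_{\eta,\zeta})t_\E(\xi) = t_\E(\eta \langle \zeta,\xi\rangle_\E)$, injectivity of $t_\E$ (which follows from that of $\pi_\E$) forces $\varphi_\E(a) = k \in \K(\E)$, so $a \in \varphi_\E^{-1}(\K(\E)) = C_0(V)$.

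For $B_n' = \pi_\E(C_0(D_{n+1}))B_n$: By the first part, $B_n' = \psi_n(\K(\E^{\otimes n}C_0(V)))$. The key technical calculation is that for $f \in C_0(D_{n+1})$, the left action $\varphi_{\E^{\otimes n}}(f)$ agrees with right multiplication by the function $f \circ \theta^n$, viewed as an element of $C_0(V \cap D_{-n})$. Iterating the interior-tensor-product balancing relation through the $n$ factors, one obtains $\varphi_{\E^{\otimes n}}(f)(\xi_1 \otimes \cdots \otimes \xi_n) = \xi_1 \otimes \cdots \otimes \xi_n (f \circ \theta^n)$ on simple tensors. The homeomorphism $\theta^n \colon V \cap D_{-n} \to D_{n+1}$, obtained from Remark \ref{rem:domains partial action} applied with $g = n$ and $h = 1$, ensures that $f \mapsto f \circ \theta^n$ is an isomorphism of $C_0(D_{n+1})$ onto $C_0(V \cap D_{-n})$. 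Combined with Proposition \ref{prop:tensor product fullness}, which implies $\E^{\otimes n} = \E^{\otimes n}C_0(D_{-n})$ and hence $\E^{\otimes n}C_0(V) = \E^{\otimes n}C_0(V \cap D_{-n})$, this yields $\K(\E^{\otimes n}C_0(V)) = \varphi_{\E^{\otimes n}}(C_0(D_{n+1}))\K(\E^{\otimes n})$. Applying the $C_0(X)$-linear isomorphism $\psi_n$ finishes the argument.

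The main obstacle is the book-keeping needed to pass the structure map through all $n$ factors of the tensor product: each application of $\varphi_\E$ replaces a function $f$ by $f \circ \theta$ on a smaller domain, and only when $f$ is supported in $D_{n+1}$ does the iterated expression $f \circ \theta^n$ remain in $C_0(X)$ all the way through. The fullness statement of Proposition \ref{prop:tensor product fullness}, identifying the support of $\E^{\otimes n}$ as $D_{-n}$, is precisely what allows us to reconcile the naive ideal $C_0(V)$ appearing in the definition of $B_n'$ with the smaller ideal $C_0(V \cap D_{-n})$ produced by the partial action.
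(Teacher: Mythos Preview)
Your proposal is correct. For the second equality $B_n'=\pi_\E(C_0(D_{n+1}))B_n$, your argument is essentially the paper's: both identify $\E^{\otimes n}C_0(V)$ with $\varphi_{\E^{\otimes n}}(C_0(D_{n+1}))\E^{\otimes n}$ via the homeomorphism $\theta^n\colon D_{-n}\cap V\to D_{n+1}$, then pass to compacts and apply the $C_0(X)$-linear isomorphism $\psi_n$. You are in fact somewhat more explicit than the paper in invoking Proposition~\ref{prop:tensor product fullness} to justify replacing $C_0(V)$ by $C_0(V\cap D_{-n})$ on the right.

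For the containment $I_\E\subseteq C_0(V)$ you take a genuinely different route. The paper argues via the $C_0(X)$-algebra structure: it uses Proposition~\ref{cor:compacts for sigma fgp module} (shifted by $\theta$) to characterise $\K(\E)$ as those operators whose fibre norms lie in $C_0(V)$, and then reads off $f\in C_0(V)$ from the fact that $\psi_{t_\E}$ is a $C_0(X)$-linear isomorphism. Your argument is more elementary and self-contained: from $\pi_\E(a)=\psi_{t_\E}(k)$ you multiply both sides by $t_\E(\xi)$, use the identities $\pi_\E(a)t_\E(\xi)=t_\E(\varphi_\E(a)\xi)$ and $\psi_{t_\E}(k)t_\E(\xi)=t_\E(k\xi)$, and conclude $\varphi_\E(a)=k\in\K(\E)$ directly from injectivity of $t_\E$. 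This avoids the continuous-field machinery entirely and would work for any $C^*$-correspondence with injective universal representation; the paper's approach, by contrast, ties in naturally with the fibre picture developed for Theorem~\ref{thm: fixed point algebra}.
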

\begin{proof}
Using Proposition \ref{cor:compacts for sigma fgp module} and the fact that the fibers of $\K(\E)$ with respect to the $C_0(X)$-structure induced by $\varphi_\E$ are shifted by $\theta$ compared to the fibers with respect to the $C_0(X)$-structure induced by right multiplication, we obtain
\begin{equation}\label{eq:compacts for fibered structure induced by structure map}\K(\E)=\{t\in\calL(\E):(x\mapsto\norm{t_x})\in C_0(V)\}.\end{equation}
Now take $f\in C_0(X)$ and assume that there exists a compact operator $k$ such that $\pi_\E(f)=\psi_{t_\E}(k)$. By definition of the $C_0(X)$-structure on $B_n$ we have $\pi(f)_x=f(x)$ for all $x\in X$. Since $\psi_{t_\E}$ is a $C_0(X)$-linear isomorphism this and (\ref{eq:compacts for fibered structure induced by structure map}) imply $f\in C_0(V)$. This shows that $I_\E$ is contained in $C_0(V)$.

On the other hand, by Lemma \ref{lem:katsura ideal} Katsura's ideal $J_\E$ is equal to $C_0(V)$. By covariance of the universal representation $J_\E$ is contained in $I_\E$. This establishes the first equality of the lemma,  $I_\E=C_0(V)$. 

To show the remaining part of the lemma, first note 
\[\E^{\otimes n}C_0(V)=\varphi_{\E^{\otimes n}}(C_0(\theta^n(D_{-n}\cap V)))\E^{\otimes n}.\]
Since $\theta^n(D_{-n}\cap V)$ equals $D_{n+1}$ we obtain
\begin{align*}
B_n'&=\psi_n(\K(\E^{\otimes n}I_\E))=\psi_n(\varphi_{\E^{\otimes n}}(C_0(D_{n+1}))\K(\E^{\otimes n}))\\&=\pi_\E(C_0(D_{n+1}))\psi_n(\K(\E^{\otimes n}))=\pi_\E(C_0(D_{n+1}))B_n
\end{align*}
which concludes the proof.
\end{proof}
By Proposition \ref{prop:commutative diagram for cores} for any Cuntz--Pimsner algebra, $B_n'$ is contained in $B_{n+1}$ (Lemma \ref{lem:calculating ideal for core} also establishes this for the specific case $\E = \Gamma_0(\mathcal{V}, \alpha)$.). This induces an inclusion map from $\psi_n^{-1}(B_n')$ to $\K(\E^{\otimes n})$ which we compute in the next lemma.

\begin{lemma}\label{lem:inclusion map cores compacts}
Let $n \in \mathbb{N}$ and denote by $\iota^n$ the inclusion map from $B_n'$ into $B_{n+1}$. Then we have \[\psi_n^{-1}\circ\iota^n\circ\psi_n(k)=k\otimes \id_\E\]
    for all $k\in\psi_n^{-1}(B_n')$, and any $n \in \mathbb{N}$.
\end{lemma}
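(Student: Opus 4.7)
The plan is to reduce the identity to rank-one operators by density and then use the covariance relation $\pi_\E(a) = \psi_{t_\E}(\varphi_\E(a))$ for $a \in J_\E$, which by Lemma \ref{lem:calculating ideal for core} coincides with $I_\E = C_0(V)$ in our setting. The isomorphism $\psi_n$ restricts to an isomorphism from $\K(\E^{\otimes n} I_\E)$ onto $B_n'$, and rank-one operators $\theta_{\xi,\eta}$ with $\xi \in \E^{\otimes n} I_\E$ and $\eta \in \E^{\otimes n}$ densely span $\K(\E^{\otimes n} I_\E)$. Since both sides of the claimed equality are linear and continuous in $k$, it is enough to verify the statement on such operators. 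Cohen factorization allows writing $\xi = \xi' \cdot a$ with $\xi' \in \E^{\otimes n}$ and $a \in I_\E$.

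Using the recursion of Definition \ref{def:rep of tensor product}, I would compute
\[
\psi_n(\theta_{\xi' a,\,\eta}) \;=\; t_\E^n(\xi' a)\, t_\E^n(\eta)^* \;=\; t_\E^n(\xi')\, \pi_\E(a)\, t_\E^n(\eta)^*.
\]
Approximating $\varphi_\E(a) \in \K(\E)$ by finite sums $\sum_i \theta_{\alpha_i,\beta_i}$, covariance gives $\pi_\E(a) = \lim \sum_i t_\E(\alpha_i) t_\E(\beta_i)^*$, and therefore
\[
\psi_n(\theta_{\xi' a,\,\eta}) \;=\; \lim \sum_i t_\E^{n+1}(\xi' \otimes \alpha_i)\, t_\E^{n+1}(\eta \otimes \beta_i)^* \;=\; \psi_{n+1}\Bigl(\lim \sum_i \theta_{\xi' \otimes \alpha_i,\,\eta \otimes \beta_i}\Bigr).
\]
This simultaneously shows that $\iota^n \circ \psi_n(\theta_{\xi' a,\,\eta})$ lies in $B_{n+1}$ and identifies its preimage in $\K(\E^{\otimes(n+1)})$ as the limit on the right.

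The remaining task is to recognise this limit as $\theta_{\xi,\eta} \otimes \id_\E$. Evaluating on an elementary tensor $\zeta \otimes v \in \E^{\otimes n} \otimes \E$ and using the inner-product formula on $\E^{\otimes(n+1)}$, one finds
\[
\theta_{\xi' \otimes \alpha_i,\,\eta \otimes \beta_i}(\zeta \otimes v) \;=\; \xi' \otimes \theta_{\alpha_i,\beta_i}\!\bigl(\varphi_\E(\langle \eta,\zeta \rangle_\E)\,v\bigr).
\]
Summing over $i$ and passing to the limit gives $\xi' \otimes \varphi_\E\bigl(a\langle \eta,\zeta \rangle_\E\bigr)v$, which by the interior tensor relation $\xi' \otimes \varphi_\E(c)v = (\xi' \cdot c) \otimes v$ equals $(\xi' a)\langle \eta,\zeta\rangle_\E \otimes v = (\theta_{\xi,\eta} \otimes \id_\E)(\zeta \otimes v)$. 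Linearity and continuity then extend the identity from rank-one operators to all of $\K(\E^{\otimes n} I_\E)$.

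The main obstacle is keeping the tensor-product bookkeeping clean: making sure the recursion for $t_\E^{n+1}$ behaves as expected when an element is inserted on the right of $\E^{\otimes n}$, and verifying pointwise equality on elementary tensors carefully. A useful by-product of the last step is that it confirms $k \otimes \id_\E$ genuinely defines a compact operator on $\E^{\otimes(n+1)}$, which is implicit in the statement of the lemma; the remaining ingredients are just density of rank-one operators together with the covariance relation furnished by the universal representation.
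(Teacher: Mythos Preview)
Your argument is correct and follows the same overall strategy as the paper: factor an element of $\psi_n^{-1}(B_n')$ so that a copy of $\pi_\E(a)$ with $a\in I_\E$ sits between $t_\E^n(\xi')$ and $t_\E^n(\eta)^*$, rewrite $\pi_\E(a)$ via the covariance relation as an element of $\psi_{t_\E}(\K(\E))$, and then read off the resulting compact operator on $\E^{\otimes(n+1)}$. The technical execution differs in two respects. First, the paper parametrises the dense spanning set as $\varphi_{\E^{\otimes n}}(f)\theta_{\xi,\eta}$ with $f\in C_0(D_{n+1})$ (via Lemma~\ref{lem:calculating ideal for core}), whereas you use $\theta_{\xi'a,\eta}$ with $a\in I_\E$ through Cohen factorisation; these are interchangeable through the identity $\E^{\otimes n}I_\E=\varphi_{\E^{\otimes n}}(C_0(D_{n+1}))\E^{\otimes n}$. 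Second, the paper exploits a countable frame $(\zeta_i)$ for $\E$ (available here by Lemma~\ref{lem:frame for submodule}) to write $\pi_\E(g)=\sum_i t_\E(\zeta_i)t_\E(\zeta_i)^*\pi_\E(g)$ and then invokes the frame identity $\sum_i\theta_{\xi\otimes\zeta_i,\eta\otimes\zeta_i}=\theta_{\xi,\eta}\otimes\id_\E$ directly, while you approximate $\varphi_\E(a)$ by arbitrary finite-rank operators and verify the limit equals $\theta_{\xi,\eta}\otimes\id_\E$ by evaluating on elementary tensors. Your route is a little more self-contained in that it bypasses frames and the countable-generation hypothesis; the paper's route makes the formula $k\otimes\id_\E$ fall out of the frame identity without a separate pointwise check.
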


\begin{proof}
    To ease notation we will drop the subscripts and write $\pi$ and $\varphi$ instead of $\pi_\E$ and $\varphi_{\E^{\otimes n}}$, respectively. We will drop the $t^n$ entirely so an expression of the form $\xi\eta^*$ for $\xi,\eta\in\E^{\otimes n}$ is understood to mean $t^n(\xi)t^n(\eta)^*$. 
    
    From Lemma \ref{lem:frame for submodule}, $\E$ is countably generated, and hence has a countable frame $(\zeta_i)_{i\in \N}$. 
    
    Take $g\in I_\E$. By the definition of $I_\E$ there exists $k\in\K(\E)$ such that $\pi(g)=\psi_{t}(k)$. Using Lemma \ref{lem:frame induces approx unit in compacts} we obtain 
    \begin{equation}\label{eq:inclusion map compacts}\sum_{i\in\N}\zeta_i\zeta_i^*\pi(g)=\sum_{i\in\N}\psi_{t}(\theta_{\zeta_i,\zeta_i})\psi_{t}(k)=\psi_{t}(\sum_{i\in\N}\theta_{\zeta_i,\zeta_i}k)=\psi_{t}(k)=\pi(g)\end{equation}
    where the series converges in norm.

    We now prove the statement of the Lemma. By Lemma \ref{lem:calculating ideal for core} we have \[\psi_n^{-1}(B_n')=\varphi(C_0(D_{n+1}))\K(\E^{\otimes n}).\] It therefore suffices to prove the statement for compact operators of the form $\varphi(f)\theta_{\xi,\eta}$ with $f\in C_0(D_{n+1})$ and $\xi,\eta\in\E^{\otimes n}$. The image of such an operator under $\psi_n$ is $\pi(f)\xi\eta^*$. Using Equation (\ref{eq:inclusion map compacts}) we calculate
    \[
    \pi(f)\xi\eta^*=\xi\pi(f\circ\theta^n)\eta^*=\sum_{i\in\N} \xi\zeta_i\zeta_i^*\pi(f\circ\theta^n)\eta^*=\sum_{i\in\N}\pi(f) \xi\zeta_i\zeta_i^*\eta^*,
    \]
    and hence 
    \begin{align*}
        \psi_n^{-1}\circ\iota^n\circ\psi_n(\varphi(f)\theta_{\xi,\eta})&=\psi_n^{-1}(\sum_{i\in\N}\pi(f) \xi\zeta_i\zeta_i^*\eta^*)=\sum_{i\in\N}\psi_n^{-1}(\pi(f) \xi\zeta_i\zeta_i^*\eta^*)\\&=\sum_{i\in\N}\varphi(f)\theta_{\xi\otimes\zeta_i,\eta\otimes\zeta_i}=\varphi(f)\theta_{\xi,\eta}\otimes\id_\E,
    \end{align*}
    where the last equality follows from the fact that $(\zeta_i)_{i\in\N}$ is a frame. This concludes the proof.
\end{proof}

Before we can prove the main theorem of this section we make a few observations.

A diagram 
\[
\xymatrix{
  A \ar[r]^{\psi} \ar[d]_{\varphi} & C \ar[d]^{\gamma} \\
  B \ar[r]_{\beta} & D
}
\]
is \emph{pushout} of $C^*$-algebras is a pushout if $D$ is generated by $\beta(B)\cup \gamma(C)$ and if every other pair of morphisms $\alpha : B \to E$ and $ \delta : C \to E$ into a $C^*$-algebra $E$ satisfying $\alpha \circ \varphi = \delta \circ \psi$, there is a (necessarily unique) $*$-homomorphism $\sigma : D \to E$ satisfying $\alpha = \sigma \circ  \beta$ and $\delta = \sigma \circ \gamma$ \cite[Section 2.3]{pedersen:1999}

\begin{lemma}\label{pushout C(X)-algebras}
    Let $A$, $B$, $C$ and $D$ be $C_0(X)$-algebras for some locally compact Hausdorff space $X$. Consider the commutative diagram 
\[\begin{tikzcd}
	A & B \\
	C & D
	\arrow["\beta", from=1-1, to=1-2]
	\arrow["\alpha", from=1-1, to=2-1]
	\arrow["\delta", from=1-2, to=2-2]
	\arrow["\gamma", from=2-1, to=2-2]
\end{tikzcd}\]
where all the morphisms are $C_0(X)$-linear and injective. Assume that the diagram is a pushout in the sense of \cite{pedersen:1999}. In addition, assume that $\beta(A)$ is a hereditary subalgebra in $B$, and that $\alpha(A)C=C$. Then the diagram
\[\begin{tikzcd}
	{A_x} & {B_x} \\
	{C_x} & {D_x}
	\arrow["{\alpha_x}", from=1-1, to=2-1]
	\arrow["{\delta_x}", from=1-2, to=2-2]
	\arrow["{\beta_x}", from=1-1, to=1-2]
	\arrow["{\gamma_x}", from=2-1, to=2-2]
\end{tikzcd}\]
is a pushout for every $x\in X$.
\end{lemma}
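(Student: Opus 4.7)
The plan is to verify the universal property of the fiber diagram directly by lifting, pushing out, and descending. Throughout, let $\pi_x^A, \pi_x^B, \pi_x^C, \pi_x^D$ denote the evaluation maps at $x$, and let $I_x = C_0(X\setminus\{x\})$. The first half is easy: commutativity of the fiber square follows from $C_0(X)$-linearity of the four original morphisms (which guarantees that they descend to the fibers), and $D_x$ is generated by $\delta_x(B_x) \cup \gamma_x(C_x)$ since $D$ is generated by $\delta(B) \cup \gamma(C)$ and $\pi_x^D$ is surjective.

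For the universal property, given $\phi_x : B_x \to E$ and $\psi_x : C_x \to E$ with $\phi_x \circ \beta_x = \psi_x \circ \alpha_x$, I would set $\phi = \phi_x \circ \pi_x^B$ and $\psi = \psi_x \circ \pi_x^C$. The relations $\pi_x^B \circ \beta = \beta_x \circ \pi_x^A$ and $\pi_x^C \circ \alpha = \alpha_x \circ \pi_x^A$, coming from $C_0(X)$-linearity, give $\phi \circ \beta = \psi \circ \alpha$. Applying the pushout property of $D$ then produces a unique $\sigma : D \to E$ with $\sigma \circ \delta = \phi$ and $\sigma \circ \gamma = \psi$.

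The key step, which I expect to be the main obstacle, is to show that $\sigma$ annihilates the closed ideal $I_x D$ and thus descends to $\sigma_x : D_x \to E$. On a single generator, $C_0(X)$-linearity of $\delta$ gives $\sigma(f \cdot \delta(b)) = \sigma(\delta(fb)) = \phi_x(\pi_x^B(fb)) = 0$ for $f \in I_x$, since $fb \in \ker \pi_x^B$; the analogous statement holds on $\gamma(C)$. Because the $C_0(X)$-action on $D$ is central via the embedding into $M(D)$, for any product $d_1 \cdots d_n$ of generators one can push $f$ onto the leftmost factor, $f \cdot (d_1 \cdots d_n) = (f \cdot d_1) d_2 \cdots d_n$, and $\sigma$ sends this to zero. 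Since $D$ is generated by $\delta(B) \cup \gamma(C)$, linearity and continuity extend the vanishing to all of $I_x D$. The descended map $\sigma_x$ satisfies the compatibility with $\delta_x$ and $\gamma_x$ by a routine diagram chase, and uniqueness is immediate from the generation statement. I expect the hereditary and absorption hypotheses on $\beta(A)$ and on $\alpha(A)C$ to enter not in the descent itself but via Pedersen's explicit structural description of the pushout, which is what ensures that $D$ carries a genuine $C_0(X)$-algebra structure compatible with the pushout in the first place and justifies the manipulations above.
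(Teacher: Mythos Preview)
Your argument is correct and is genuinely different from the paper's. The paper does not verify the universal property by hand; it simply invokes two results of Pedersen (\cite[Proposition~4.5 and Theorem~9.3]{pedersen:1999}), the first giving a concrete criterion for a square to be a pushout and the second governing how pushouts behave under passage to quotients. Those results are stated under side conditions of the hereditary/absorption type, which is why the lemma carries them as hypotheses.

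Your direct lift--pushout--descend argument, by contrast, uses only that $D$ is already assumed to be a $C_0(X)$-algebra, that the four maps are $C_0(X)$-linear, and that $D$ is generated by $\delta(B)\cup\gamma(C)$ (which is part of Pedersen's definition of pushout). The vanishing of $\sigma$ on $I_xD$ follows exactly as you outline: $C_0(X)$-linearity of $\delta$ and $\gamma$ pushes $f\in I_x$ onto a single generating factor where it is killed, multiplicativity of $\sigma$ handles products, and linearity plus continuity closes the argument. Nowhere do you need $\beta(A)$ hereditary in $B$ or $\alpha(A)C=C$. Your closing speculation that these hypotheses are required to equip $D$ with its $C_0(X)$-structure is therefore off target---that structure is already part of the hypotheses---and you may simply drop the remark. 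What your route buys is a self-contained proof that is in fact slightly more general than the lemma as stated; what the paper's route buys is brevity, at the cost of importing hypotheses that your argument shows to be superfluous.
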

\begin{proof}
    The statement follows from \cite[Proposition 4.5]{pedersen:1999} together with \cite[Theorem 9.3]{pedersen:1999}.
\end{proof}
\begin{remark}\label{rem:inductive system of C(X)-algebras}
    It is not hard to show that if a $C_0(X)$-algebra $A$ is the limit of an inductive system $(A_n,\varphi)$, where the $A_n$ are $C_0(X)$-algebras and the $\varphi_n$ are injective and $C_0(X)$-linear, then $A_x$ is the limit of the inductive system $((A_n)_x,(\varphi_n)_x)$. 
\end{remark}
\begin{lemma}\label{lem:continuous field of Cstar algebras}
    Let $X$ be a locally compact metrizable space, and let $A$ be a separable $C_0(X)$-algebra all of whose fibers are simple and nonzero. If, for every compact subset $K$ of $X$, there exists $e_K\in A$ such that $\norm{e_K(x)}\geq 1$ for all $x$ in $K$, then $A$ is continuous. 
\end{lemma}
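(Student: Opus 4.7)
The plan is to establish continuity of $x\mapsto\norm{a(x)}$ for every $a\in A$. Upper semicontinuity is automatic for $C_0(X)$-algebras, so the task is to prove lower semicontinuity. Using $\norm{a(x)}^2 = \norm{(a^*a)(x)}$, I can reduce to positive elements. So fix a positive $b\in A$, a point $x_0\in X$, and $\varepsilon>0$ small; writing $\lambda\coloneqq\norm{b(x_0)}$, I want to produce a neighborhood of $x_0$ on which $\norm{b(x)}>\lambda-\varepsilon$. If $\lambda\leq\varepsilon$ the conclusion is trivial, so assume $\lambda>\varepsilon$.

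Next, I would apply continuous functional calculus. Choose a continuous function $f:[0,\infty)\to[0,1]$ with $f(0)=0$, $f(\lambda)=1$, and $f(t)=0$ for $t\leq\lambda-\varepsilon/2$. Set $c\coloneqq f(b)\in A$. Because $\lambda\in\sigma(b(x_0))$, the spectral mapping theorem gives $\norm{c(x_0)}=1$, in particular $c(x_0)\neq 0$. Moreover, for any $x\in X$, if $\norm{c(x)}>0$ then the spectrum of $b(x)$ meets the support of $f$, forcing $\norm{b(x)}>\lambda-\varepsilon/2$. Hence it suffices to show that $\norm{c(x)}$ is bounded below by a positive constant on a neighborhood of $x_0$.

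Here is where the simplicity of the fibers and the hypothesis on $e_K$ enter. Pick a compact neighborhood $K$ of $x_0$ and an element $e=e_K\in A$ with $\norm{e(x)}\geq 1$ on $K$. The closed two-sided ideal of $A_{x_0}$ generated by the nonzero element $c(x_0)$ is all of $A_{x_0}$ by simplicity, and hence contains $e(x_0)$. Thus there exist finitely many $\alpha_i,\beta_i\in A_{x_0}$ with $\bigl\lVert e(x_0)-\sum_i\alpha_i c(x_0)\beta_i\bigr\rVert<1/4$. Since the quotient map $A\to A_{x_0}=A/C_0(X\setminus\{x_0\})A$ is surjective, I can lift each $\alpha_i,\beta_i$ to elements $c_i,d_i\in A$. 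Writing $y\coloneqq\sum_i c_i c d_i\in A$, the element $y-e\in A$ has $\norm{(y-e)(x_0)}<1/4$, so by upper semicontinuity of the norm function $x\mapsto\norm{(y-e)(x)}$ there is a neighborhood $V\subset K$ of $x_0$ on which $\norm{(y-e)(x)}<1/3$, and therefore $\norm{y(x)}>2/3$.

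Finally, I bound $\norm{y(x)}\leq M\norm{c(x)}$ on $V$, where $M\coloneqq\sup_{x\in V}\sum_i\norm{c_i(x)}\norm{d_i(x)}$. Each $x\mapsto\norm{c_i(x)}$ and $x\mapsto\norm{d_i(x)}$ is upper semicontinuous on the compact set $\overline{V}\subset K$, hence bounded, so $M<\infty$. This yields $\norm{c(x)}\geq 2/(3M)>0$ on $V$, which by the previous paragraph gives $\norm{b(x)}>\lambda-\varepsilon/2>\lambda-\varepsilon$ for all $x\in V$, completing the lower semicontinuity argument. The main technical point I expect to watch out for is the interplay between only having upper semicontinuity available (which can only yield \emph{upper} bounds on norms at nearby points) and needing to transfer a \emph{lower} bound from $x_0$ to a neighborhood; the trick is that simplicity lets one rewrite $c$ itself as dominating (up to a uniform factor $M$) an element whose norm is bounded below by the hypothesis on $e_K$.
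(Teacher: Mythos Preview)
Your argument is correct and complete. The reduction to lower semicontinuity for positive elements, the functional-calculus trick producing $c=f(b)$ with $c(x_0)\neq0$, the use of simplicity of $A_{x_0}$ to write $e(x_0)$ approximately as $\sum_i\alpha_i c(x_0)\beta_i$, and the combination of upper semicontinuity with the lower bound $\norm{e(x)}\geq1$ on $K$ all go through exactly as you describe. The technical points (that $f(b)(x)=f(b(x))$ because fiber maps are $\ast$-homomorphisms, that the closed ideal generated by $c(x_0)$ equals the closure of $\operatorname{span}\{a\,c(x_0)\,b:a,b\in A_{x_0}\}$ via an approximate unit, and that $M<\infty$ by upper semicontinuity on the compact $\overline{V}$) are all sound.

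The paper takes a much shorter route: it restricts $A$ to a compact neighborhood $\overline{W}$ of $x_0$ and invokes \cite[Lemma~2.3]{Dadarlat:2009}, which gives continuity of $x\mapsto\norm{q_{\overline{W}}(a)(x)}$ directly for separable $C(K)$-algebras over compact metrizable $K$ with simple fibers; since the restriction map $q_{\overline{W}}$ is a fiberwise isomorphism, this transports back to $A$. Your proof is essentially an inline, self-contained version of the argument underlying that cited lemma. What you gain is transparency about where each hypothesis enters---in particular, you make explicit use of the $e_K$ assumption, which is invisible in the paper's one-line invocation---and your argument does not use separability of $A$ or metrizability of $X$ anywhere, so it in fact establishes the lemma under the weaker hypothesis that $X$ is merely locally compact Hausdorff. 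The paper's approach, by contrast, is far more economical if one is willing to cite the literature.
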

\begin{proof}
    Let $x_0$ be in $X$, and let $W$ be an open neighborhood of $x_0$ with compact closure. By \cite[Lemma 2.3]{Dadarlat:2009} the function $x\mapsto\norm{q_{\overline{W}}(a)(x)}$ is continuous in $x_0$. Since $q_{\overline{W}}$ is an isomorphism on each fiber and hence preserves the norm, this proves the claim. 
\end{proof}
\begin{theorem}\label{thm: fixed point algebra}
    Let $\E=\Gamma_0(\V,\theta)$ be the $C^*$-correspondence associated to a partial automorphism $\theta:U\to V$ on a locally compact second countable Hausdorff space $X$, and a vector bundle $\V$ over $U$. Then the fixed point algebra $\CP(\E)^{0}\subset\CP(\E)$ is a continuous $C_0(X)$-algebra. 
    
    The fiber at a point $x\in\bigcap_{k>0}D_k$ is the UHF algebra given by the limit of the inductive system $(A_n,\varphi_n)$, where \[A_n=M_{d(\theta^{-1}(x))}\otimes...\otimes M_{d(\theta^{-n}(x))}\quad\text{ and }\quad\varphi_n(a)=a\otimes I_{d(\theta^{-n-1}(x))}\]
    with the identity matrix $I_{d(\theta^{-n-1}(x))}\in M_{d(\theta^{-n-1}(x))}$.  

    If $x$ lies in $D_n$ but not $D_{n+1}$ for some $n\in\N$, then the fiber at $x$ is $M_{d(\theta^{-1}(x))}\otimes...\otimes M_{d(\theta^{-n}(x))}$. If $x$ does not lie in $D_1=V$, then the fiber at $x$ is $\C$.
\end{theorem}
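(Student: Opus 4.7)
The plan is to combine the identification $\CP(\E)^0 = \overline{\bigcup_n B_{[0,n]}}$ from Proposition \ref{prop:core and fixed point algebra}, the $C_0(X)$-algebra structure from Proposition \ref{prop:fixed point algebra is C(X) algebra}, and the short exact sequences
\[
0 \to B_{n+1} \to B_{[0,n+1]} \to B_{[0,n]}/B_n' \to 0
\]
from Proposition \ref{prop:commutative diagram for cores}. I will compute the fibers of each $B_{[0,n]}$ by induction on $n$ and then pass to the inductive limit using Remark \ref{rem:inductive system of C(X)-algebras}.

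For the induction, the base case is $B_{[0,0]} = \pi_\E(C_0(X))$, whose fiber at every point is $\C$. For the inductive step I will invoke the standard fact that a short exact sequence of $C_0(X)$-algebras with $C_0(X)$-linear maps remains exact on fibers (an approximate-unit argument in the ideal, using that the fiber of the quotient equals the quotient of the fibers), so that it suffices to compute the outer terms. Lemma \ref{lem:calculating ideal for core} gives $B_n' = \pi_\E(C_0(D_{n+1}))B_n$, whence $(B_n')_x$ coincides with $(B_n)_x$ when $x \in D_{n+1}$ and vanishes otherwise. The key observation is that at every $x$, at least one of the outer terms in the above exact sequence has vanishing fiber: if $x \in D_{n+1}$, then by induction $(B_{[0,n]})_x \cong (B_n)_x = (B_n')_x$, so the quotient fiber is zero and $(B_{[0,n+1]})_x = (B_{n+1})_x$; if $x \notin D_{n+1}$, then $(B_{n+1})_x = 0$ and $(B_{[0,n+1]})_x = (B_{[0,n]})_x$. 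This yields the claimed truncated-tensor-product formula for $(B_{[0,n]})_x$.

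Passing to the inductive limit via Remark \ref{rem:inductive system of C(X)-algebras} then identifies the fibers of $\CP(\E)^0$. For $x \in \bigcap_k D_k$, the connecting map $(B_{[0,n]})_x \to (B_{[0,n+1]})_x$ factors through $(B_{n+1})_x$, since its composition into the quotient has vanishing fiber, and Lemma \ref{lem:inclusion map cores compacts}, combined with the identifications from the inductive step, shows that it takes the form $a \mapsto a \otimes I_{d(\theta^{-(n+1)}(x))}$; the limit is then the UHF algebra claimed in the theorem statement. For $x \in D_n \setminus D_{n+1}$ the connecting maps are eventually the identity, giving the finite tensor product, and for $x \notin V$ every fiber is $\C$. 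Continuity of the field will follow from Lemma \ref{lem:continuous field of Cstar algebras}: every fiber is simple (matrix algebras and UHF algebras are simple), $\CP(\E)^0$ is separable since $X$ is second countable and $\E$ is countably generated by Lemma \ref{lem:frame for submodule}, and for any compact $K \subset X$, taking $f \in C_0(X)$ equal to one on $K$ yields $e_K = \pi_\E(f) \in \CP(\E)^0$ with $\|(e_K)_x\| = 1$ for every $x \in K$.

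The main technical hurdle will be pinning down the precise form of the connecting map in the inductive limit at points of $\bigcap_k D_k$. This requires verifying that, under the canonical $C_0(X)$-linear identifications, the inclusions $(B_n')_x \hookrightarrow (B_n)_x \hookrightarrow (B_{[0,n]})_x$ all coincide with the identity on the corresponding matrix tensor product for $x \in D_{n+1}$, so that Lemma \ref{lem:inclusion map cores compacts} transfers directly to give the desired $a \mapsto a \otimes I$ description of the fibered inclusion.
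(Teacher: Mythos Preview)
Your proposal is correct and follows the same overall architecture as the paper's proof: establish the $C_0(X)$-algebra structure, compute the fibers of $B_{[0,n]}$ inductively using Proposition~\ref{prop:commutative diagram for cores} together with Lemmas~\ref{lem:calculating ideal for core} and~\ref{lem:inclusion map cores compacts}, pass to the limit via Remark~\ref{rem:inductive system of C(X)-algebras}, and conclude continuity from Lemma~\ref{lem:continuous field of Cstar algebras}.

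The difference lies in how you extract the fiber information from Proposition~\ref{prop:commutative diagram for cores}. The paper works with the left square
\[
\begin{tikzcd}
B_n' \arrow[r] \arrow[d] & B_{[0,n]} \arrow[d] \\
B_{n+1} \arrow[r] & B_{[0,n+1]}
\end{tikzcd}
\]
verifies (using \cite{pedersen:1999}) that it is a pushout, and then invokes the dedicated Lemma~\ref{pushout C(X)-algebras} to conclude that the square remains a pushout fiberwise; the induction then compares to a model pushout square with known corners. You instead use the bottom exact row $0 \to B_{n+1} \to B_{[0,n+1]} \to B_{[0,n]}/B_n' \to 0$ directly, together with the elementary fact that such a sequence of $C_0(X)$-algebras stays exact on fibers, and the dichotomy that at every point one of the two outer terms has trivial fiber. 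This is a more hands-on route: it bypasses the pushout formalism and Pedersen's results entirely, at the cost of having to carry along as part of the induction that the inclusion $(B_n)_x \hookrightarrow (B_{[0,n]})_x$ is an isomorphism for $x \in D_n$ (which you correctly flag in your final paragraph). The paper's pushout approach packages this same bookkeeping into the statement that $\rho^n_x$ is the identity, so the two arguments are tracking the same data in slightly different language.
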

\begin{proof}
     Proposition \ref{prop:fixed point algebra is C(X) algebra} shows that $\CP(\E)^0$ is a $C_0(X)$-algebra. It remains to compute the fibers. 

    From Proposition \ref{prop:core and fixed point algebra} we know that $\CP(\E)^0$ is the limit of the inductive system $(B_{[0,n]},\phi^n)$ where $\phi_n$ is simply the inclusion map. Therefore we need to calculate the fibers of $B_{[0,n]}$ and determine the maps $\phi^n_x$ induced on the fibers.

      Recall the commutative diagram
\[\begin{tikzcd}
	{B_n'} & {B_{[0,n]}} \\
	{B_{n+1}} & {B_{[0,n+1]}}
	\arrow["{\rho^n}", from=1-1, to=1-2]
	\arrow[from=2-1, to=2-2]
	\arrow["{\phi^n}"', from=1-2, to=2-2]
	\arrow["{\iota^n}"', from=1-1, to=2-1]
\end{tikzcd}\]
from Proposition \ref{prop:commutative diagram for cores}, where we have given names to the maps now. Lemmas \ref{lem:calculating ideal for core} and \ref{lem:inclusion map cores compacts} together with \cite[Lemma 5.10]{Katsura:2004} imply that $\iota^n(B_n')B_{n+1}$ equals $B_{n+1}$. Theorem 2.4 in \cite{pedersen:1999} then shows that the diagram  is a pushout. From Lemma \ref{pushout C(X)-algebras} we obtain that the diagram 
\begin{equation}\label{eq:diagram pushout fibers}\begin{tikzcd}
	{(B_n')_x} & {(B_{[0,n]})_x} \\
	{(B_{n+1})_x} & {(B_{[0,n+1]})_x}
	\arrow["{\iota^n_x}"', from=1-1, to=2-1]
	\arrow["{\rho_x^n}", from=1-1, to=1-2]
	\arrow["{\phi^n_x}", from=1-2, to=2-2]
	\arrow[from=2-1, to=2-2]
\end{tikzcd}\end{equation}
is a pushout. 

For $n\in\N_0$ and $x\in D_n$ write 
\[M_{n,x}\coloneqq M_{d(\theta^{-1}(x))}\otimes...\otimes M_{d(\theta^{-n}(x))}\]
where it is understood that $M_{0, x}=0$ for all $x\in X$. 

We will now show inductively that for all $n\in\N_0$ and all $x\in D_n$ the fiber $(B_{[0,n]})_x$ is isomorphic to $M_{n,x}$, that the map $\phi^n_x$ is given by $\phi^n_x(a)=a\otimes I$ with the unit matrix $I$ of size $d(\theta^{-n-1}(x))$, and that the map $\rho_x^n$ is given by the identity. 

For $n=0$ we have $B_0=B_{[0,0]}=C_0(X)$ and $\rho^0=\id_{C_0(X)}$. This implies $(B_{[0,0]})_x=\C$ $\rho^0_x=\id_\C$ for all $x\in X$. 

Fix $n\in\N$ and $x\in D_{n+1}$. Since $(B_{n+1})_x$ is isomorphic to $M_{n+1,x}$, using Lemma \ref{lem:calculating ideal for core} we obtain that $(B_n')_x$ is isomorphic to $M_{n,x}$. Lemma \ref{lem:inclusion map cores compacts} shows that the inclusion map $\iota^n_x$ of $(B_n')_x$ into $(B_{n+1})_x$ is given by $\iota^n_x(a)=a\otimes I$.

Now comes the induction step. Assume we know that $(B_{[0,n]})_x$ is isomorphic to $M_{n,x}$, and that $\rho^n_x$ is the identity. Using \cite[Proposition 4.5]{pedersen:1999} it is easy to see that the diagram 
\[\begin{tikzcd}
	{M_{n,x}} & {M_{n,x}} \\
	{M_{n+1,x}} & {M_{n+1,x}}
	\arrow["{-\otimes I}", from=1-2, to=2-2]
	\arrow["{-\otimes I}"', from=1-1, to=2-1]
	\arrow["{\mathrm{id}}", from=1-1, to=1-2]
	\arrow["{\mathrm{id}}"', from=2-1, to=2-2]
\end{tikzcd}\]
is a pushout. From (\ref{eq:diagram pushout fibers}) and the uniquess of pushouts we obtain that $(B_{[0,n+1]})_x$ is isomorphic to $M_{n+1,x}$, and that $\phi_x^n$ corresponds to the map $-\otimes I$. We also obtain that the inclusion map of $(B_{n+1})_x$ into $(B_{[0,n+1]})_x$ is given by the identity. Since it is clear from Lemma \ref{lem:calculating ideal for core} that the inclusion map of $(B_{n+1}')_x$ into $(B_{n+1})_x$ is the identity, this implies that $\rho^{n+1}_x$ is the identity as well. 

We are now ready to finish the proof. First note that by Remark \ref{rem:inductive system of C(X)-algebras} the fiber $\CP(\E)^0_x$ of the fixed point algebra at a point $x\in X$ is the limit of the inductive system $((B_{[0,n]})_x,\phi^n_x)$. If $x$ lies in $D_n$ for all $n\in\N$ then we have shown that $(B_{[0,n]})_x$ is isomorphic to $M_{n,x}$, and that $\phi^n_x$ is given by $-\otimes I$. 

If there is $n\in\N_0$ such that $x$ lies in $D_n$ but not in $D_{n+1}$, then we obtain that $(B_{[0,k]})_x$ is isomorphic to $M_{k,x}$ and $\phi^{k-1}_x=-\otimes I$ for all $k\leq n$. The fiber $(B_{n+1})_x$ is isomorphic to the trivial $C^*$-algebra $0$. Using \cite[Proposition 4.5]{pedersen:1999} once more, we have that the diagram 
\[\begin{tikzcd}
	0 & {M_{n,x}} \\
	0 & {M_{n,x}}
	\arrow["{\mathrm{id}}", from=1-2, to=2-2]
	\arrow[from=1-1, to=2-1]
	\arrow[from=1-1, to=1-2]
	\arrow[from=2-1, to=2-2]
\end{tikzcd}\]
is a pushout. This implies $(B_{[0,k]})_x=0$ for all $k>n$, and hence $\CP(\E)^0_x$ is isomorphic to $M_{n,x}$. This includes the case $n=0$, so we are done with computing the fibers.

Since the fibers vanish outside of $D_n$ it suffices to show continuity of $\CP(\E)^0$ as a $C_0(D_n)$-algebra. The fibers over points in $D_n$ are simple, and so the statement follows from Lemma \ref{lem:continuous field of Cstar algebras}.
\end{proof}
\begin{corollary}\label{cor:fixed point algebra for constant rank}
    If the rank of the vector bundle is constant and equal to $d\in\N$, then the fiber over a point $x\in\bigcap_{k\geq0}D_k$ is given by the UHF algebra $M_{d^\infty}$.
\end{corollary}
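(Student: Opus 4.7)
The plan is to derive this directly from Theorem \ref{thm: fixed point algebra} by substituting the constant rank into the inductive system that describes the fiber. By that theorem, for $x\in\bigcap_{k>0}D_k$ the fiber $\CP(\E)^0_x$ is isomorphic to the inductive limit of the system $(A_n,\varphi_n)$, where
\[
A_n=M_{d(\theta^{-1}(x))}\otimes\cdots\otimes M_{d(\theta^{-n}(x))},\qquad \varphi_n(a)=a\otimes I_{d(\theta^{-n-1}(x))}.
\]

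First I would observe that under the hypothesis $d(\theta^{-k}(x))=d$ for every $k\in\N$ (since the rank function is constant equal to $d$ on $U$, and every $\theta^{-k}(x)$ lies in $U$ because $x\in\bigcap_{k>0}D_k$). Thus $A_n$ becomes the $n$-fold tensor product $M_d^{\otimes n}$, which is canonically identified with $M_{d^n}$, and the connecting maps $\varphi_n$ become the standard unital embeddings $M_{d^n}\hookrightarrow M_{d^{n+1}}$ given by $a\mapsto a\otimes I_d$.

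Finally, I would note that the UHF algebra $M_{d^\infty}$ is by definition the inductive limit of this sequence of standard embeddings (see e.g.\ Glimm's original construction). Hence $\CP(\E)^0_x\cong M_{d^\infty}$. The only point that is not entirely automatic is checking that the case $x\in\bigcap_{k>0}D_k$ of the theorem applies to every $x\in\bigcap_{k\ge0}D_k$; but since $D_0=X$, the two conditions coincide, so there is no obstacle. I do not expect any substantive difficulty in this argument; it is a direct specialization of the preceding theorem.
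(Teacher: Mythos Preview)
Your proposal is correct and is exactly the intended argument: the paper states this as an immediate corollary of Theorem~\ref{thm: fixed point algebra} without giving a separate proof, and your derivation simply makes explicit the specialization of the inductive system $(A_n,\varphi_n)$ to the constant-rank case.
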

\begin{corollary}
    Assume that $X$ is compact and has finite covering dimension, $\theta:X\to X$ has domains $U=V=X$ and that the vector bundle has constant rank $d$. Then the fixed point algebra is trivial as a $C(X)$-algebra, meaning that $\CP(\E)^0$ is isomorphic to $ C(X)\otimes M_{d^\infty}$ as $C(X)$-algebras.
\end{corollary}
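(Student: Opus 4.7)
The plan is to combine the structural results already established with a general trivialisation theorem for continuous $C(X)$-algebras whose fibers are strongly self-absorbing. The hypothesis $U = V = X$ implies, via the recursive definition of the domains in Example \ref{ex:partial automorphism}, that $D_k = X$ for every $k \in \mathbb{Z}$. In particular $\bigcap_{k > 0} D_k = X$, and the constant-rank hypothesis places us in the setting of Corollary \ref{cor:fixed point algebra for constant rank}: every fiber of $\CP(\E)^0$ is isomorphic to the UHF algebra $M_{d^\infty}$. Combined with Theorem \ref{thm: fixed point algebra}, this identifies $\CP(\E)^0$ as a continuous $C(X)$-algebra with constant fiber $M_{d^\infty}$.

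To deduce triviality, I would appeal to the trivialisation theorem of Dadarlat--Winter for separable unital $C(X)$-algebras whose fibers are isomorphic to a fixed $K_1$-injective strongly self-absorbing $C^*$-algebra $\mathcal{D}$ over a finite-dimensional compact metrizable space $X$. Since $M_{d^\infty}$ is separable, unital, strongly self-absorbing, and has trivial $K_1$ (hence is $K_1$-injective), and since $X$ is compact, second countable (so metrizable), and finite-dimensional by hypothesis, all assumptions are met. The conclusion is a $C(X)$-linear $\ast$-isomorphism $\CP(\E)^0 \cong C(X) \otimes M_{d^\infty}$. Separability of $\CP(\E)^0$ follows from separability of $C(X)$, countable generation of $\E$ (Lemma \ref{lem:frame for submodule}), and the density of linear combinations of products $t_\E^n(\xi)t_\E^m(\eta)^*$ inside $\CP(\E)$.

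The main conceptual obstacle is essentially packaged into the Dadarlat--Winter theorem. Because the vector bundle $\V$ may be non-trivial, each finite stage $B_{[0,n]} \cong \psi_n(\K(\E^{\otimes n}))$ is in general a non-trivial bundle of matrix algebras; for instance, if $\K(\E)$ has non-vanishing Dixmier--Douady invariant, then $B_{[0,n]}$ is not isomorphic to $C(X) \otimes M_{d^n}$ as a $C(X)$-algebra. Triviality of the fixed point algebra only emerges in the inductive limit, where tensoring with $M_{d^\infty}$ absorbs the bundle-theoretic obstruction; on a finite-dimensional compact space this absorption is precisely what the Dadarlat--Winter theorem guarantees, which is why the finite-dimensionality hypothesis on $X$ is essential.
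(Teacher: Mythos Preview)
Your argument is correct and follows exactly the same route as the paper: combine Theorem \ref{thm: fixed point algebra} and Corollary \ref{cor:fixed point algebra for constant rank} to exhibit $\CP(\E)^0$ as a continuous $C(X)$-algebra with all fibers $M_{d^\infty}$, then invoke the Dadarlat--Winter trivialisation theorem \cite{Dadarlat:2008}. Your additional remarks on why the finite stages $B_{[0,n]}$ need not be trivial and why the finite-dimensionality hypothesis is essential are accurate and add useful context beyond the paper's one-line proof.
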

\begin{proof}
    The statement follows from Theorem \ref{thm: fixed point algebra}, Corollary \ref{cor:fixed point algebra for constant rank} and the main result of \cite{Dadarlat:2008}.
\end{proof}
\begin{remark}
    In the setting of Example \ref{ex:continuous field of cuntz algebras}, Theorem \ref{thm: fixed point algebra} was proven in \cite[Corollary 3]{Vasselli:2005}.
\end{remark}

\section{Ideal structure and the tracial state space}\label{sect:ideal structure and tracial state space}
\subsection{Ideals in Cuntz--Pimsner algebras}\label{subsec: the ideal structure of Cuntz--Pimsner algebras}
In this section we establish some general results about ideals in Cuntz--Pimsner algebras, which we will later use to establish the necessary and sufficient conditions for simplicity of the Cuntz--Pimsner algebra associated to partial automorphisms twisted by vector bundles.

For the remainder of Section~\ref{subsec: the ideal structure of Cuntz--Pimsner algebras}, we set $\E$ to be be a $C^*$-correspondence over a $C^*$-algebra $A$ with structure map $\varphi_\E:A\to \mathcal{L}(\E)$.

\begin{definition}[{\cite[Definitions 4.1,4.8]{Katsura2007}}]
    Let $I$ be an ideal in $A$. We define an idea $\E(I)$ of $A$ by
    \[\E(I)\coloneqq\{\langle\eta,\varphi_\E(a)\xi\rangle_\E:a\in I,\xi,\eta\in\E\},\]
    and say $I$ is \emph{positively invariant} if it contains $\E(I)$. 
\end{definition}

Let $I$ be a positively invariant ideal of $A$. Section \ref{sect:Hilbert modules} shows that $\E I$ is a Hilbert $I$-module and that $\E/\E I$ is a Hilbert $A/I$-module in the natural way. As in Section \ref{sect:Hilbert modules} we write $A_I$ for $A/I$ and $\E_I$ for $\E/\E I$. 

By \cite[Proposition 1.3]{Katsura2007} being positively invariant is equivalent to $\varphi_\E(I)\E$ being contained in $\E I$. Hence the structure map $\varphi_\E$ restricts to $\E I$. This turns $\E I$ into a $C^*$-correspondence over $I$. On the quotient module $(\E)_I$ we can define a structure map by
\[
\varphi_{\E_I}([a]_I)\coloneqq [\varphi_\E(a)]_I.
\]
This is well defined because the positive invariance of $I$ implies that if $a$ lies in $I$, then $[\varphi_\E(a)]=0$. Hence $(\E)_I$ is a $C^*$-correspondence over $A_I$. 

Even if the original $C^*$-correspondence $\E$ is not a bimodule, it is always possible to pass to a bimodule over a different $C^*$-algebra whose Cuntz--Pimsner algebra is $\CP(\E)$. The new $C^*$-algebra is the fixed point algebra, while the bimodule is the first spectral subspace $\CP(\E)^1$. For notational convenience we write $A_b$ instead of $\CP(\E)^0$, and $\E_b$ instead of $\CP(\E)^1$. It follows from the definition of the spectral subspaces that $\E_b$ is indeed a bimodule over $A_b$, with the multiplication in the Cuntz--Pimsner algebra as left and right action and inner products
\begin{align*}
    \langle\xi,\eta\rangle_{\E_b}=\xi^*\eta,\quad\text{and}\quad\prescript{}{\E_b}{\langle}\xi,\eta\rangle=\xi\eta^*
\end{align*}
for $\xi,\eta\in\E_b$.
\begin{proposition}[{\cite[Proposition 10.8]{Katsura2007}}]\label{prop:algebra of correspondence and bimodul}
    The natural embedding of $A_b$ and $\E_b$ into $\CP(\E)$ gives an isomorphism $\CP(\E_b)\cong\CP(\E)$.
\end{proposition}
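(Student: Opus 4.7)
The plan is to apply the gauge-invariant uniqueness theorem to the covariant representation of $\E_b$ on $\CP(\E)$ given by the natural inclusions $\pi: A_b \hookrightarrow \CP(\E)$ and $t: \E_b \hookrightarrow \CP(\E)$. That $(\pi, t)$ is a representation of $\E_b$ viewed as a $C^*$-correspondence over $A_b$ is immediate from the bimodule operations: $\pi(\langle \xi, \eta\rangle_{\E_b}) = \xi^*\eta = t(\xi)^*t(\eta)$ and $\pi(a)t(\xi) = a\xi = t(\varphi_{\E_b}(a)\xi)$, where $\varphi_{\E_b}$ is just left multiplication in $\CP(\E)$. The representation is injective because $\pi$ is an inclusion, and it admits a gauge action: the gauge action $\gamma$ on $\CP(\E)$ fixes $A_b = \CP(\E)^0$ pointwise and scales $\E_b = \CP(\E)^1$ by the circle parameter, so it restricts to the required gauge action on $C^*(\pi, t) \subseteq \CP(\E)$.

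The key step is verifying covariance on $J_{\E_b}$. Since $\E_b$ is a Hilbert bimodule, by the discussion following Definition~\ref{def:bimodule} the map $\varphi_{\E_b}$ restricts to a $*$-isomorphism from $J_{\E_b}$ onto $\K(\E_b)$ with inverse $\varphi_{\E_b}^{-1}(\theta_{\xi, \eta}) = {}_{\E_b}\langle\xi, \eta\rangle = \xi\eta^*$. For $a = \xi\eta^* \in J_{\E_b}$ we then compute $\pi(a) = \xi\eta^* = t(\xi)t(\eta)^* = \psi_t(\theta_{\xi, \eta}) = \psi_t(\varphi_{\E_b}(a))$, and the covariance condition extends to all of $J_{\E_b}$ by norm-density of finite sums of rank-one operators in $\K(\E_b)$ together with continuity of $\pi$ and $\psi_t \circ \varphi_{\E_b}$. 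The gauge-invariant uniqueness theorem \cite[Theorem 6.4]{Katsura:2004} then produces, via the universal property of $\CP(\E_b)$, an injective $*$-homomorphism $\rho: \CP(\E_b) \to \CP(\E)$ whose image is exactly $C^*(\pi, t)$.

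To conclude, I would verify that $C^*(\pi, t) = \CP(\E)$. For any $\xi_1, \dots, \xi_n \in \E$, the product $t_\E(\xi_1)\cdots t_\E(\xi_n)$ equals $t_\E^n(\xi_1 \otimes \cdots \otimes \xi_n) \in (\E_b)^n \subseteq C^*(\pi, t)$, and likewise for adjoints. Together with $\pi_\E(A) \subseteq A_b$, these elements densely span $\CP(\E)$ (as noted after Proposition~\ref{prop:core and fixed point algebra}), so $C^*(\pi, t) = \CP(\E)$ and $\rho$ is surjective, hence an isomorphism. The main technical point is the covariance verification, but this turns out to be completely transparent once one uses the Hilbert bimodule formula $\varphi_{\E_b}^{-1}(\theta_{\xi, \eta}) = \xi\eta^*$; the rest is a standard application of the gauge-invariant uniqueness theorem.
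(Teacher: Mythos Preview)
The paper does not prove this proposition; it is quoted directly from \cite[Proposition~10.8]{Katsura2007} without argument. Your proof is correct and is the natural approach: verifying that the inclusions give an injective covariant representation of the bimodule $\E_b$ admitting a gauge action, invoking the gauge-invariant uniqueness theorem, and observing that the generators $\pi_\E(A)\subseteq A_b$ and $t_\E(\E)\subseteq \E_b$ of $\CP(\E)$ already lie in $C^*(\pi,t)$. The covariance check is exactly as you describe, since for a Hilbert bimodule one has $J_{\E_b}=\overline{\Span}\{\xi\eta^*:\xi,\eta\in\E_b\}$ with $\varphi_{\E_b}(\xi\eta^*)=\theta_{\xi,\eta}$, making $\pi(\xi\eta^*)=\psi_t(\theta_{\xi,\eta})$ immediate. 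This is essentially the standard argument and matches Katsura's own proof strategy.
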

\begin{definition}
    We say that an ideal $P$ of $\CP(\E)$ is \emph{gauge invariant} if it is invariant under the gauge action, that is, $\gamma_z(P)$ is contained in $P$ for all $z\in\mathbb{T}$. 
\end{definition}
\begin{lemma}\label{lem: ideal cstar correspondence}
    Let $\E$ be a $C^*$-correspondence over a $C^*$-algebra $A$, and let $P$ be a gauge-invariant ideal of $\CP(\E)$. Define an ideal $I_P$ of $A_b$ by $I_P=A_b\cap P$. Then we have isomorphisms $P\cong\CP(\E_bI_P)$ and $\CP(\E)/P\cong \CP((\E_b)_{I_P})$.
\end{lemma}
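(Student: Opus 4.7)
The plan is to reduce immediately to the Hilbert bimodule picture using Proposition~\ref{prop:algebra of correspondence and bimodul}: since $\CP(\E)\cong\CP(\E_b)$ via the natural embeddings of $A_b$ and $\E_b$, I may as well treat $(\E_b,A_b)$ as the original data. In this setup, the $C^*$-subalgebra generated by $A_b$ and $\E_b$ inside $\CP(\E)$ is all of $\CP(\E)$, and for $\xi,\eta\in\E_b$ one has the bimodule identities $\langle\xi,\eta\rangle_{\E_b}=\xi^*\eta$ and ${}_{\E_b}\langle\xi,\eta\rangle=\xi\eta^*$.

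First I would check that $I_P$ is positively invariant in $A_b$: for $a\in I_P$ and $\xi,\eta\in\E_b$, the element $\langle\eta,\varphi_{\E_b}(a)\xi\rangle_{\E_b}=\eta^*a\xi$ lies in $P$ (as $P$ is a two-sided ideal and $a\in P$) and in $A_b=\CP(\E)^0$ (as $\eta^*\in\CP(\E)^{-1}$ and $a\xi\in\CP(\E)^1$), hence in $I_P$. This makes $\E_bI_P$ a Hilbert $I_P$-sub-bimodule of $\E_b$, and $(\E_b)_{I_P}$ a Hilbert $(A_b/I_P)$-bimodule, so that both $\CP(\E_bI_P)$ and $\CP((\E_b)_{I_P})$ are defined. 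Note that, since $\E_b$ is a bimodule, $\E_bI_P\subset P$ because its elements have the form $\xi a$ with $a\in I_P\subset P$.

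For the first isomorphism, the inclusions $I_P\hookrightarrow P$ and $\E_bI_P\hookrightarrow P$ (inherited from the embeddings into $\CP(\E)$) form an injective covariant representation of $\E_bI_P$ on $P$; covariance is automatic in the bimodule setting because the left action is implemented by multiplication in $\CP(\E)$ and Katsura's ideal of a bimodule maps isomorphically onto the compacts, a property preserved by the sub-bimodule $\E_bI_P$. The gauge action on $\CP(\E)$ restricts to $P$ by gauge invariance, and this restriction is a gauge action for the representation. To conclude from the gauge-invariant uniqueness theorem (\cite[Theorem 6.4]{Katsura:2004}) that the induced map $\CP(\E_bI_P)\to P$ is an isomorphism I would verify surjectivity by an explicit spectral decomposition: the conditional expectation $\Phi$ from Section~\ref{sec:grading and the fixed point algebra} restricted to $P$ witnesses that $P=\overline{\bigoplus_{n\in\Z}(P\cap\CP(\E)^n)}$, and each spectral piece $P\cap\CP(\E)^n$ is densely spanned by monomials of the form $\xi_1\cdots\xi_n a\eta_m^*\cdots\eta_1^*$ with $a\in I_P$ (using that $P$ is generated as an ideal by $I_P$, analogously to the crossed product case), and such monomials lie in the image of the canonical map.

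For the second isomorphism, composing the embeddings of $A_b$ and $\E_b$ into $\CP(\E)$ with the quotient map $\CP(\E)\twoheadrightarrow\CP(\E)/P$ gives maps annihilating $I_P$ and $\E_bI_P$ respectively, so they descend to $A_b/I_P\to\CP(\E)/P$ and $(\E_b)_{I_P}\to\CP(\E)/P$; together they form a covariant representation of $(\E_b)_{I_P}$. Injectivity of the coefficient map follows from $A_b\cap P=I_P$, and the gauge action on $\CP(\E)$ descends to $\CP(\E)/P$ because $P$ is gauge invariant. Surjectivity of the induced map $\CP((\E_b)_{I_P})\to\CP(\E)/P$ is clear because the images of $A_b/I_P$ and $(\E_b)_{I_P}$ generate the quotient. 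Invoking the gauge-invariant uniqueness theorem once more yields the isomorphism. The main obstacle I anticipate is the surjectivity argument in the first isomorphism, that is, showing $P=C^*(I_P,\E_bI_P)$ inside $\CP(\E)$; this reduces, via the faithful conditional expectation onto the fixed point algebra, to checking that $P\cap A_b=I_P$ generates $P$ as a gauge-invariant ideal, which one verifies by approximating an element of $P\cap\CP(\E)^n$ using a frame of $\E_b^{\otimes n}$ and the positive invariance of $I_P$.
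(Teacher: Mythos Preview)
Your reduction to the bimodule picture via Proposition~\ref{prop:algebra of correspondence and bimodul} is exactly what the paper does. After that, however, the paper simply invokes \cite[Theorem~10.6]{Katsura2007}, which already establishes for Hilbert bimodules that gauge-invariant ideals $P$ of $\CP(\E_b)$ are in bijection with suitable ideals of the coefficient algebra and that $P\cong\CP(\E_bI_P)$ and $\CP(\E_b)/P\cong\CP((\E_b)_{I_P})$. You instead reprove the bimodule case of that theorem from scratch via the gauge-invariant uniqueness theorem. Your argument is essentially correct: the key point you flag as the main obstacle, namely $P=C^*(I_P,\E_bI_P)$, goes through once one observes that $I_P\E_b=\E_bI_P$ (both equal $P\cap\E_b$, as one sees by pairing an element of $P\cap\E_b$ against arbitrary $\eta\in\E_b$ with the right and left inner products respectively), so that $(\E_bI_P)^n=\E_b^{\,n}I_P=P\cap\CP(\E)^n$. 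One caution: your frame argument for $\E_b^{\otimes n}$ tacitly assumes countable generation; in general replace it by an approximate unit of finite-rank operators in $\K(\E_b^{\otimes n})$.

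The trade-off is clear: the paper's one-line citation is efficient and leans on Katsura's structural machinery, while your route is self-contained and makes transparent exactly where gauge invariance, positive invariance, and the bimodule identities enter. Both are valid; yours is longer but avoids importing the full ideal-structure theory of \cite{Katsura2007}.
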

\begin{proof}
    Combine Proposition \ref{prop:algebra of correspondence and bimodul} with \cite[Theorem 10.6]{Katsura2007}.
\end{proof}

\begin{lemma}\label{lem:quotient bimodule}
   Let $I$ be an ideal of $A$ such that $\varphi_\E(I)\E=\E I$. Then we have $A_bI\cong (AI)_b$ and $A_b/A_bI\cong (A/AI)_b$ as $C^*$-algebras, and $\E_bI\cong(\E I)_b$ and $(\E_b)_I\cong (\E_I)_b$ as $C^*$-correspondences. 
\end{lemma}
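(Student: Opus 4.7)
The plan is to prove all four isomorphisms simultaneously by identifying each side with a spectral subspace of a common gauge-invariant ideal of $\CP(\E)$, using both the identification $\CP(\E) \cong \CP(\E_b)$ of Proposition~\ref{prop:algebra of correspondence and bimodul} and the ``sub/quotient correspondence'' picture of gauge-invariant ideals from \cite{Katsura2007}.

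First I would observe that the hypothesis $\varphi_\E(I)\E = \E I$ implies $I$ is positively invariant, and it is precisely the non-degeneracy condition needed for $\E I$ to behave as a $C^*$-correspondence over $I = AI$. By \cite[Theorem~10.6]{Katsura2007} (or the discussion leading to it) applied to the correspondence $\E$ over $A$, one obtains a short exact sequence
\[
0 \longrightarrow \CP(\E I) \longrightarrow \CP(\E) \longrightarrow \CP(\E_I) \longrightarrow 0,
\]
whose kernel is a gauge-invariant ideal $P$ of $\CP(\E)$, with the arrows induced by the inclusion $\E I\hookrightarrow\E$ and the quotient $\E\twoheadrightarrow\E_I$. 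Now I would apply Lemma~\ref{lem: ideal cstar correspondence} to this same $P$ (using $\CP(\E)\cong\CP(\E_b)$) to obtain the second description
\[
P \cong \CP(\E_b I_P), \qquad \CP(\E)/P \cong \CP((\E_b)_{I_P}), \qquad I_P := A_b \cap P.
\]

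Unpacking notation, the objects ``$A_b I$'' and ``$\E_b I$'' in the statement are to be read as $I_P$ and $\E_b I_P$ respectively; similarly ``$A_b/A_bI$'' as $A_b/I_P$ and ``$(\E_b)_I$'' as $(\E_b)_{I_P}$. Both descriptions of $P$ (respectively $\CP(\E)/P$) are gauge-equivariant, since the arrows in the exact sequence and the inclusions $A_b,\E_b\subset\CP(\E)$ all respect the gauge action. Taking $0$-th and $1$-st spectral subspaces on both sides and using that for the Hilbert bimodules $\E_b I_P$ and $(\E_b)_{I_P}$ the $0$-th and $1$-st spectral subspaces of their Cuntz--Pimsner algebras coincide respectively with the coefficient algebra and the bimodule itself (recorded in Section~\ref{sec:grading and the fixed point algebra}), one reads off
\[
(AI)_b = \CP(\E I)^0 \;\cong\; I_P \;=\; A_bI, \qquad (\E I)_b = \CP(\E I)^1 \;\cong\; \E_b I_P \;=\; \E_b I,
\]
and the analogous identifications $(A/AI)_b \cong A_b/A_bI$ and $(\E_I)_b \cong (\E_b)_I$ from the quotient side.

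The main obstacle is in the first step: namely, justifying the exact sequence and, crucially, verifying that the gauge-invariant ideal produced there agrees with the one Lemma~\ref{lem: ideal cstar correspondence} associates to $I_P = A_b \cap P$. This amounts to unpacking Katsura's $T$-pair framework and checking that the condition $\varphi_\E(I)\E = \E I$ is exactly what makes the canonical $*$-homomorphism $\CP(\E I) \to \CP(\E)$ injective with gauge-invariant image $P$; once this matching is in place, the remainder of the argument is routine bookkeeping with spectral subspaces and the bimodule property of $\E_b$.
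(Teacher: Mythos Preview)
Your approach differs substantially from the paper's. The paper argues more directly: it invokes \cite[Proposition~9.3]{Katsura2007} to regard $\CP(\E I)$ as a concrete subalgebra of $\CP(\E)$, then identifies $(AI)_b = \CP(\E I)^0$ and $(\E I)_b = \CP(\E I)^1$ as literal subspaces of $\CP(\E)$ and checks they coincide with $A_bI$ and $\E_bI$ there; covariance of the identity morphism comes from Lemma~\ref{lem:bijective implies covariant}. For the quotient side the paper writes down explicit maps $\Pi$ and $T$ on spanning elements and again appeals to Lemma~\ref{lem:bijective implies covariant}. No short exact sequence and no spectral-subspace matching is used.

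Your route via the exact sequence and spectral subspaces is plausible, but there are two issues. First, in this paper's logical order the exact sequence $0\to\CP(\E I)\to\CP(\E)\to\CP(\E_I)\to 0$ is Proposition~\ref{prop:ideal cstar correspondence}, which is \emph{derived from} Lemma~\ref{lem:quotient bimodule}; so you must obtain it independently from \cite{Katsura2007} (via the $T$-pair formalism), which you acknowledge but do not carry out. Second, and more concretely, the step ``the objects $A_bI$ and $\E_bI$ are to be read as $I_P$ and $\E_bI_P$'' hides exactly the content of the lemma: $A_bI$ denotes the closed span of products in $\CP(\E)$, whereas $I_P=A_b\cap P$; the equality $A_bI=I_P$ is not notational but requires proof (for instance, showing $P=\CP(\E)I$ and then using the conditional expectation $\Phi$ to see $A_b\cap\CP(\E)I\subset A_bI$). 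Until that identification is established, your spectral-subspace argument yields $(AI)_b\cong I_P$ rather than $(AI)_b\cong A_bI$.
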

\begin{proof}
    According to Proposition 9.3 in \cite{Katsura2007}, the Cuntz--Pimsner algebra $\CP(\E I)$ is a subalgebra of $\CP(\E)$ in the natural way. Hence we can regard $(AI)_b$ as a subalgebra of $\CP(\E)$ as well. As such it is equal to $A_b I$, and the same holds for $(\E I)_b$ and $\E_bI$. Since in both cases left and right multiplication are defined by multiplication in $\CP(\E)$, the identity map on $\CP(\E)$ defines a bijective morphism of $C^*$-correspondences between $(\E I)_b$ and $\E_bI$. From Lemma \ref{lem:bijective implies covariant} we obtain that this morphism is covariant.

    By definition, $A_I=A/A I$ and $\E_I=\E/\E I$ consist of equivalence classes of the form $[a]=\{a+b:b\in I\}$ and $[\xi]=\{\xi+\eta:\eta\in\E I\}$ for $a$ in $A$ and $\xi$ in $\E$. We define a map
    \begin{align*}
        \Pi:(A_I)_b \to (A_b)_I, 
        \qquad [\xi_1]...[\xi_n][\eta_n]^*...[\eta_1]^* \mapsto [\xi_1...\xi_n\eta_n^*...\eta_1^*]\quad\text{for any }n\in\N,
    \end{align*}
    which after extending linearily is a well-defined *-isomorphism. In the same way we can define a bijective linear map $T$ from $(\E_I)_b$ to $(\E_b)_I$. Both $(A_b)_I$ and $(\E_b)_I$ can be regarded as subsets of $\CP(\E)/\CP(\E)I$, and from the definition of multiplication in the quotient algebra and the definitions of $\Pi$ and $T$ it follows that $(\Pi,T)$ is a morphism of $C^*$-correspondences. Applying Lemma \ref{lem:bijective implies covariant} again yields that this morphism is covariant.
 \end{proof}
 
\begin{proposition}\label{prop:ideal cstar correspondence}
   Let $\E$ be a $C^*$-correspondence over a $C^*$-algebra $A$, and let $I$ be an ideal of $A$ such that $\varphi_\E(I)\E=\E I$. Then $\CP(\E)I$ is a gauge-invariant ideal in $\CP(\E)$, and we have $\CP(\E)I\cong\CP(\E I)$ as well as $\CP(\E)/(\CP(\E)I)\cong \CP(\E_I)$. In other words the exact sequence 
   \begin{align*}
       0\to I\to A\to A/I\to 0
   \end{align*}
    gives rise to an exact sequence
    \begin{align*}
        0\to\CP(\E I)\to\CP(\E)\to\CP(\E_I)\to 0.
    \end{align*}
\end{proposition}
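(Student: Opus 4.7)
The plan is to reduce the proposition to the bimodule setting via Lemma \ref{lem: ideal cstar correspondence} and then transfer back to the original data using Lemma \ref{lem:quotient bimodule} together with Proposition \ref{prop:algebra of correspondence and bimodul}. Write $P := \CP(\E)I$ for the closed two-sided ideal of $\CP(\E)$ generated by $\pi_\E(I)$. Since $\pi_\E(I) \subseteq A_b = \CP(\E)^0$ consists of gauge-fixed elements, $P$ is itself gauge-invariant, which settles the first assertion. Setting $I_P := A_b \cap P$, Lemma \ref{lem: ideal cstar correspondence} yields
\[
P \cong \CP(\E_b I_P), \qquad \CP(\E)/P \cong \CP((\E_b)_{I_P}).
\]

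The crux is the identification $I_P = A_b I$. The containment $A_b I \subseteq I_P$ is immediate. For the reverse, I would use the faithful gauge-equivariant conditional expectation $\Phi : \CP(\E) \to A_b$ obtained by averaging over the gauge action: since $P$ is gauge-invariant, $\Phi(P) = A_b \cap P = I_P$, so it suffices to show $\Phi(xiy) \in A_b I$ for generic $x, y \in \CP(\E)$ and $i \in I$. The key preliminary is that $\varphi_\E(I)\E = \E I$ implies, by induction on $m$, that $\varphi_{\E^{\otimes m}}(I)\E^{\otimes m} = \E^{\otimes m} I$; this follows by inductively shuttling an $I$-factor across successive tensor factors. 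Next, expanding $x$ and $y$ into finite linear combinations of monomials $t_\E^p(\xi) t_\E^q(\eta)^*$ with $\xi \in \E^{\otimes p}$ and $\eta \in \E^{\otimes q}$, and applying the covariance identities $\pi_\E(a)t_\E^m(\zeta) = t_\E^m(\varphi_{\E^{\otimes m}}(a)\zeta)$ and $t_\E^m(\zeta a) = t_\E^m(\zeta)\pi_\E(a)$ together with the invariance above, one rewrites each nonvanishing $\Phi(xiy)$ in the form $\sum_k \pi_\E(j_k) b_k$ with $j_k \in I$ and $b_k \in A_b$. Since $A_b I$ is a two-sided ideal of $A_b$ by Lemma \ref{lem:quotient bimodule}, $I \cdot A_b \subseteq A_b I$, and by continuity one concludes $\Phi(P) \subseteq A_b I$.

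Once $I_P = A_b I$ is in hand, one has $\E_b I_P = \E_b \cdot A_b I = \E_b I$ (using $\E_b A_b \subseteq \E_b$) and $(\E_b)_{I_P} = (\E_b)_I$. Lemma \ref{lem:quotient bimodule} identifies $\E_b I \cong (\E I)_b$ and $(\E_b)_I \cong (\E_I)_b$ as $C^*$-correspondences, and Proposition \ref{prop:algebra of correspondence and bimodul} then gives $\CP((\E I)_b) \cong \CP(\E I)$ and $\CP((\E_I)_b) \cong \CP(\E_I)$. Combining these identifications yields $P \cong \CP(\E I)$ and $\CP(\E)/P \cong \CP(\E_I)$, which is the desired short exact sequence. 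The main obstacle is the identification $I_P = A_b I$: the invariance hypothesis $\varphi_\E(I)\E = \E I$ is essential throughout, and carrying out the shuttling argument cleanly requires careful bookkeeping with the $\mathbb{Z}$-grading of $\CP(\E)$ and with the positions of the $I$-factors inside the monomials.
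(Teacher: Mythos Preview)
Your approach is correct and follows essentially the same structure as the paper's: reduce to the bimodule picture via Lemma~\ref{lem: ideal cstar correspondence}, identify $I_P = A_b I$, and then invoke Lemma~\ref{lem:quotient bimodule} and Proposition~\ref{prop:algebra of correspondence and bimodul}. The one place where you take a longer route is the identification $I_P = A_b I$. The paper first observes that the hypothesis $\varphi_\E(I)\E = \E I$ forces $\CP(\E)I = I\CP(\E)$ (your ``shuttling'' argument applied once to the generators), so that $P = \CP(\E)I$ is already a two-sided ideal. With $I$ sitting on the right and contained in $A_b$, one gets $\Phi(ci) = \Phi(c)i \in A_b I$ directly, and hence $I_P = \Phi(P) \subseteq A_b I$ with no need to analyse general products $xiy$. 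Your argument with elements of the form $xiy$ is correct but does more work than necessary; establishing $\CP(\E)I = I\CP(\E)$ up front both shows $P$ is an ideal and trivialises the computation of $I_P$.
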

\begin{proof}
    First note that if $\varphi_\E(I)\E$ equals $\E I$ then $P=\CP(\E)I=I\CP(\E)$ is indeed an ideal in $\CP(\E)$. Since $\gamma_z(\CP(\E)I)=\gamma_z(\CP(\E))\gamma_z(I)=\CP(\E)I$, it is gauge invariant. 
    
    We have $I_P=A_b\cap \CP(\E)I=A_bI$ and hence $\E_bI_P=\E_bA_bI=\E_bI$. This implies $A_b/(A_bI_P)=A_b/(A_bI)$ as well as $(\E_b)_{I_P}=\E_b/(\E_bI_P)=\E_b/(\E_bI)=(\E_b)_I$. Now combine Proposition \ref{prop:algebra of correspondence and bimodul}, Lemma \ref{lem: ideal cstar correspondence} and Lemma \ref{lem:quotient bimodule}.
\end{proof}

\subsection{Restricting the \texorpdfstring{$C^*$}{C*}-correspondence}\label{sect: restriction}
This section compiles a number of results which are related to restricting the partial automorphism and hence the $C^*$-correspondence in different ways, and which we will need in later chapters. 
We write $\E$ for the $C^*$-correspondence $\Gamma_0(\V,\theta)$ over $C_0(X)$, where $\theta:U\to V$ is a partial automorphism on a locally compact second countable Hausdorff space $X$, and $\V$ is a vector bundle over $U$. 

\begin{definition}\label{def:restriction partial auto}
    For a partial automorphism $\theta:U\to V$ and an open subset $W\subset U$ we define a partial automorphism $\theta^{(W)}:W\to\theta(W)$ on $X$ by $\theta^{(W)}(x)=x$ for all $x\in W$. The $n$-th domain of $\theta^{(W)}$ is denoted by $D_n^{(W)}$. We write $\E^{(W)}$ for the $C_0(X)$-correspondence $\Gamma_0(\V\vert_W,\theta^{(W)})$. 
\end{definition}

\begin{lemma}\label{lem:restriction of partial auto subcorrespondence}
The inclusion map from $\E^{(W)}$ to $\E$ together with the identity on $C_0(X)$ is an injective covariant morphism of $C^*$-correspondences, and thus $\E^{(W)}$ is a $C^*$-subcorrespondence of $\E$. Hence $\CP(\E^{(W)})$ is a subalgebra of $\CP(\E)$. 
\end{lemma}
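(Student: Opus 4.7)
The plan is to verify that $(\mathrm{id}_{C_0(X)}, \iota)$, with $\iota : \E^{(W)} \to \E$ the inclusion (that is, extension by zero), is an injective covariant morphism of $C^*$-correspondences in the sense of Definitions~\ref{def:morphism of Hilbert modules} and~\ref{def:morphism Cstar corr}; the last assertion of the lemma is then a formal consequence of general Cuntz--Pimsner theory. First I would check the Hilbert-module axioms: both $\langle \iota\xi,\iota\eta\rangle_\E$ and $\langle \xi,\eta\rangle_{\E^{(W)}}$ equal $x \mapsto \beta(\xi(x),\eta(x))$ on $W$ and vanish on $X\setminus W$, and right multiplication is manifestly preserved. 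The left-action compatibility $\varphi_\E(f)\iota(\xi) = \iota(\varphi_{\E^{(W)}}(f)\xi)$ holds since $\theta^{(W)}$ agrees with $\theta$ on $W$, so both sides equal $x \mapsto \xi(x)f(\theta(x))$ on $W$ and vanish elsewhere. Injectivity of $\iota$ is immediate.

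The substantive step is covariance. By Lemma~\ref{lem:katsura ideal} applied to both correspondences, $J_{\E^{(W)}} = C_0(\theta(W))$ and $J_\E = C_0(V)$. Since $\theta(W)\subseteq V$, the containment $J_{\E^{(W)}}\subseteq J_\E$ is automatic. It then remains to show the identity $\varphi_\E(a) = \psi_\iota(\varphi_{\E^{(W)}}(a))$ in $\K(\E)$ for every $a \in C_0(\theta(W))$. My approach is to compare the two sides fiber-wise using the $C_0(X)$-structure on $\K(\E)$ coming from right multiplication, which by Proposition~\ref{cor:compacts for sigma fgp module} makes $\K(\E)$ a continuous $C_0(X)$-algebra whose elements are determined by their fibers. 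The map $\iota$ is $C_0(X)$-linear with respect to right multiplication and hence so is $\psi_\iota$; on fibers, $\iota_x : \E^{(W)}_x \to \E_x$ is the identity of $\C^{d(x)}$ for $x \in W$ and vanishes otherwise, so $(\psi_\iota)_x$ is the identity on $M_{d(x)}$ for $x \in W$ and zero for $x \notin W$. Meanwhile $\varphi_\E(a)$ acts on the fiber $\E_x$ by multiplication by the scalar $a(\theta(x))$ for $x \in U$; since $\theta$ is a bijection $U \to V$ and $a$ is supported in $\theta(W)$, this scalar vanishes whenever $x \in U\setminus W$. Thus both operators have identical fibers, and the continuous-field structure on $\K(\E)$ forces them to agree.

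Finally, the statement that $\CP(\E^{(W)})$ is a $C^*$-subalgebra of $\CP(\E)$ follows from the general result cited earlier in the excerpt (see \cite[Remark~3.7]{Katsura2007} and \cite[Lemma~2.6]{Katsura:2004}): an injective covariant morphism of $C^*$-correspondences induces an injective $\ast$-homomorphism between the associated Cuntz--Pimsner algebras. The main obstacle is the covariance verification, specifically ensuring that the various $C_0(X)$-structures on $\E^{(W)}$, on $\E$, and on the corresponding algebras of compact operators are aligned so that the fiber-wise comparison is well-posed and the continuous-field structure of $\K(\E)$ can be invoked to pass from pointwise equality to equality of operators; the rest of the argument is essentially bookkeeping.
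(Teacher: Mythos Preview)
Your proof is correct and takes a genuinely different route from the paper. The paper proves covariance by an explicit frame argument: it approximates $f\in C_0(\theta(W))$ by a function $f_0$ with compact support $K\subset\theta(W)$, invokes Lemma~\ref{lem:frame for submodule} to produce finitely many sections $\xi_1,\dots,\xi_N\in\E^{(W)}$ with $\sum_n\theta_{\xi_n,\xi_n}$ acting as the identity on $\E C_0(\interior\theta^{-1}(K))$, writes $\varphi_{\E^{(W)}}(f_0)=\sum_n\theta_{\varphi_{\E^{(W)}}(f_0)\xi_n,\xi_n}$, and then pushes this finite sum through $\psi_\iota$ to obtain $\varphi_\E(f_0)$. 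Your argument instead exploits the $C_0(X)$-algebra structure on $\K(\E)$ coming from right multiplication: you observe that $\psi_\iota$ is $C_0(X)$-linear, compute the induced maps on fibers, and conclude by the fact that elements of a $C_0(X)$-algebra are determined by their fibers. Your approach is more conceptual and avoids the frame machinery of Lemma~\ref{lem:frame for submodule}, at the cost of needing the naturality of the identifications $\K(\E)_x\cong\K(\E_x)$ (which is implicit in the paper but not spelled out). Note also that you do not actually need continuity of the field for the last step; in any $C_0(X)$-algebra one has $\norm{a}=\sup_x\norm{a_x}$, so fiber-wise vanishing already forces equality. The paper's approach, by contrast, is entirely self-contained within the Hilbert-module formalism and makes no appeal to the bundle picture.
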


\begin{proof}
It is easy to check that the inclusion map is a morphism of $C^*$-correspondences, so what is left to prove is covariance. 

What we have to show is that $J_{\E^{(W)}}$ is contained in Katsura's ideal $J_{\E}$, and that the equality $\varphi_\E(f)=\Psi_{T}(\varphi_{\E^{(W)}}(f))$ holds for all $f\in J_{\E^{(W)}}$, where $T:\E^{(W)}\to\E$ is the inclusion map. Since $J_{\E^{(W)}}=C_0(\theta(W))$ and $J_{\E}=C_0(V)$ by Lemma \ref{lem:katsura ideal}, the first part is true. 

To see the equality, take $f\in J_{\E^{(W)}}$. We can approximate $f$ up to arbitrary precision by a function $f_0$ whose support is contained in some compact set $K\subset \theta(W)$. By Lemma \ref{lem:frame for submodule} there exist sections $\xi_1,...,\xi_N$ in $\E^{(W)}$ for some $N\in\N$ such that 
\[\sum_{n=1}^N\theta_{\xi_n,\xi_n}\eta=\eta,\]
holds for all $\eta\in \E  C_0(\interior\theta^{-1}(K))$. This implies that the fiber of $\sum_{n=1}^N\theta_{\xi_n,\xi_n}$ over every point $x\in \interior K$ is the identity. Note that when we speak of the fiber over a point we are referring to the $C_0(X)$-structure induced by the structure map, and not by right multiplication. Since $f_0$ vanishes outside of $K$ this implies 
\[\varphi_{\E^{(W)}}(f_0)=\varphi_{\E^{(W)}}(f_0)\sum_{n=1}^N\theta_{\xi_n,\xi_n}=\sum_{n=1}^N\theta_{\varphi_{\E^{(W)}}(f_0)\xi_n,\xi_n}.\]
 We obtain
\[\psi_T(\varphi_{\E^{(W)}}(f_0))=\psi_T\left(\sum_{n=1}^N\theta_{\varphi_{\E^{(W)}}(f_0)\xi_n,\xi_n}\right)=\sum_{n=1}^N\theta_{\varphi_{\E}(f_0)\xi_n,\xi_n}=\varphi_\E(f_0),\]
which completes the proof.
\end{proof}
\begin{proposition}\label{prop: inductive limit}
Let $X$ be a locally compact Hausdorff space, and let $\theta:U\to V$ be a partial automorphism on $X$. Take an increasing sequence $(U_k)_{k=1}^\infty$ of open subsets in $X$ such that $U=\bigcup_{k=1}^\infty U_k$. Then
\begin{align*}
    \CP(\E)=\lim_{\to}\CP(\E^{(U_k)}).
\end{align*}
\end{proposition}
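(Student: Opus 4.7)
The plan is to apply Lemma~\ref{lem:inductive limit correspondences} to the sequence of $C^*$-subcorrespondences $\E^{(U_k)}$ of $\E$, all of which sit over the same coefficient algebra $A_n = A = C_0(X)$, with $\Pi_n = \Pi^{(n)} = \id_{C_0(X)}$.

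First I would set up the required injective covariant morphisms. Since $(U_k)$ is increasing and each $U_k$ is open in $U_{k+1}$, Lemma~\ref{lem:restriction of partial auto subcorrespondence} (applied to the partial automorphism $\theta^{(U_{k+1})}$ and its open subset $U_k$) supplies an injective covariant morphism $T_k \colon \E^{(U_k)} \to \E^{(U_{k+1})}$ given by inclusion of sections (extended by zero). The same lemma, applied directly to $\theta$ and $U_k$, provides injective covariant morphisms $T^{(k)} \colon \E^{(U_k)} \to \E$. Since extension by zero is transitive, these inclusions compose compatibly, i.e.\ $T^{(k)} = T^{(k+1)} \circ T_k$. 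The condition $A = \overline{\bigcup_k \Pi^{(k)}(A_k)}$ is trivially satisfied because every $\Pi^{(k)}$ is the identity on $C_0(X)$.

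The only remaining hypothesis of Lemma~\ref{lem:inductive limit correspondences} is
\[
\E \;=\; \overline{\bigcup_{k\in\N} T^{(k)}(\E^{(U_k)})},
\]
and this is the main content to verify. Given $\xi \in \E = \Gamma_0(\V)$ and $\varepsilon>0$, I would exploit that $\xi$ vanishes at infinity in $U$: there exists a compact $K \subset U$ with $\|\xi(x)\| < \varepsilon$ for $x \in U \setminus K$. Because $(U_k)$ is an open cover of the compactum $K$, we have $K \subset U_{k_0}$ for some $k_0$. By a standard Urysohn-type argument, pick $\chi \in C_c(U_{k_0})$ with $0 \leq \chi \leq 1$ and $\chi \equiv 1$ on $K$. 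Then $\chi\xi$ has compact support inside $U_{k_0}$ and hence defines an element of $\Gamma_0(\V|_{U_{k_0}}) = \E^{(U_{k_0})}$; moreover
\[
\|\xi - \chi\xi\| \;=\; \sup_{x \in U \setminus K} |1-\chi(x)|\,\|\xi(x)\| \;<\; \varepsilon.
\]
This shows the union $\bigcup_k T^{(k)}(\E^{(U_k)})$ is dense in $\E$.

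With both hypotheses of Lemma~\ref{lem:inductive limit correspondences} verified, we conclude $\CP(\E) = \lim_{\to} \CP(\E^{(U_k)})$. The only mildly subtle step, and the main obstacle, is checking that extension by zero really lands inside the target module of sections vanishing at infinity — this works because any section of $\V|_{U_k}$ vanishing at infinity in $U_k$ (equivalently, a section whose norm is in $C_0(U_k)$) necessarily tends to zero at every boundary point of $U_k$ in $X$, so its extension by zero is continuous on all of $X$ and vanishes at infinity.
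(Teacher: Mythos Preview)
Your proposal is correct and follows essentially the same approach as the paper: both invoke Lemma~\ref{lem:restriction of partial auto subcorrespondence} for the injective covariant morphisms and then Lemma~\ref{lem:inductive limit correspondences} for the conclusion. The only difference is cosmetic---the paper verifies density of $\bigcup_k \E^{(U_k)}$ in $\E$ by pointing to the approximate-unit argument of Lemma~\ref{lem:inductive limit exhaustion by compact sets}, whereas you spell out the equivalent cutoff argument directly.
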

\begin{proof}
    As in the proof of Lemma \ref{lem:inductive limit exhaustion by compact sets}, the union of all $\E^{(U_k)}$ is dense in $\E$. The statement then follows from the Lemmas \ref{lem:inductive limit correspondences} and \ref{lem:restriction of partial auto subcorrespondence}.
\end{proof}

A subset of a topological space is said to be \emph{locally closed} if it can be written as an intersection of a closed and an open set. In particular, all open sets are locally closed.
\begin{definition}\label{def:restriction space}
Let $W$ be a locally closed, $\theta$-invariant subset of $X$. Denote by $\theta_W:U\cap W\to V\cap W$ the partial automorphism on $W$ obtained by restricting $\theta$. We write $\E_W$ for the $C_0(W)$-correspondence $\Gamma_0(\V|_{W},\theta_W)$. 
\end{definition}

\begin{proposition}\label{prop: short exact sequence}
Let $W$ be an open $\theta$-invariant subset of $X$. Then $ \CP(\E)C_0(W)$ is an ideal of $\CP(\E)$. We have
\begin{align*}
     \CP(\E)C_0(W)\cong\CP(\E_W)\quad\text{ and }\quad \CP(\E)/\CP(\E)C_0(W)\cong \CP(\E_{X\backslash W}).
\end{align*}
Hence
\begin{align*}
    0\to \CP(\E_W)\to\CP(\E)\to\CP(\E_{X\backslash W})\to 0
\end{align*}
is a short exact sequence of $C^*$-algebras. 
\end{proposition}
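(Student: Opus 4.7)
The plan is to invoke Proposition \ref{prop:ideal cstar correspondence} with $A = C_0(X)$ and the ideal $I = C_0(W)$, which sits in $C_0(X)$ because $W$ is open. That proposition produces the desired short exact sequence once the condition $\varphi_\E(I)\E = \E I$ is verified and the two $C^*$-correspondences $\E I$ and $\E_I$ are identified with $\E_W$ and $\E_{X\setminus W}$ respectively.

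First I would verify that $\varphi_\E(C_0(W))\E = \E C_0(W)$, using that $W$ is $\theta$-invariant. For $f \in C_0(W)$ and $\xi \in \E$, the section $\varphi_\E(f)\xi = \xi(f\circ\theta)$ is supported at points $x \in U$ with $\theta(x) \in W \cap V$, and invariance gives $\theta^{-1}(W\cap V)\subset W$, so this section lies in $\E C_0(W)$. Conversely, given $\xi f$ with $f \in C_0(W)$, the product $\xi f$ is supported in $W \cap U$; setting $g = f \circ \theta^{-1}$ on $\theta(W\cap U)\subset W\cap V$ (here using $\theta(W\cap U)\subset W$ by invariance) and extending by zero yields $g \in C_0(W)$ with $\xi f = \varphi_\E(g)\xi$. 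A Cohen factorization argument promotes this to the claimed equality.

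Next I would identify $\E C_0(W)$ with $\E_W = \Gamma_0(\V|_W,\theta_W)$ as a $C_0(W)$-$C^*$-correspondence. Elements of $\E C_0(W)$ are exactly those sections in $\Gamma_0(\V,\theta)$ supported in $W \cap U$, and restriction gives a bijective $C_0(W)$-linear map onto $\Gamma_0(\V|_{W \cap U})$ that preserves the inner product. The left action through $\varphi_\E$ restricts correctly because $\theta_W$ is the restriction of $\theta$ to $W \cap U$, so the bijection intertwines $\varphi_\E|_{C_0(W)}$ with $\varphi_{\E_W}$. Lemma \ref{lem:bijective implies covariant} then gives an isomorphism of $C^*$-correspondences. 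Entirely analogously, the map $[\xi] \mapsto \xi|_{(X\setminus W)\cap U}$ identifies $\E/\E C_0(W)$ with $\E_{X\setminus W} = \Gamma_0(\V|_{X\setminus W},\theta_{X\setminus W})$; well-definedness and injectivity hold because $\E C_0(W)$ consists precisely of sections supported in $W$, and the quotient inner product and structure map transport to those of $\E_{X\setminus W}$ since $\theta_{X\setminus W}$ is the restriction of $\theta$. Plugging these identifications into the conclusion of Proposition \ref{prop:ideal cstar correspondence} yields the exact sequence.

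The main technical obstacle I anticipate is surjectivity in the identification $\E/\E C_0(W) \cong \E_{X\setminus W}$: one needs that every continuous section of $\V|_{(X\setminus W)\cap U}$ vanishing at infinity lifts to a section in $\Gamma_0(\V,\theta)$. Since $(X\setminus W)\cap U$ is closed in $U$, this is a Tietze-type extension result for vector bundles over locally compact spaces, handled by extending locally through trivializations, gluing with a partition of unity, and multiplying by a compactly supported cutoff to preserve vanishing at infinity. This is in the same spirit as the extension argument used in Lemma \ref{lem:frame for submodule}, via \cite{Karoubi:1978}, and slots directly into the locally compact Serre--Swan framework of Section \ref{sec:Hilbert Czero modules}.
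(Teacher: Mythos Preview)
Your proposal is correct and follows essentially the same route as the paper's own proof: apply Proposition \ref{prop:ideal cstar correspondence} with $I=C_0(W)$, verify $\varphi_\E(C_0(W))\E=\E C_0(W)$ via $\theta$-invariance, and identify $\E C_0(W)\cong\E_W$ and $\E/\E C_0(W)\cong\E_{X\setminus W}$. The paper leaves these verifications as routine observations, whereas you have spelled them out in more detail, including the Tietze-type extension needed for surjectivity in the quotient identification.
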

\begin{proof}
First note that if $W$ is an open, $\theta$-invariant subset of $X$, then $C_0(W)$ is an ideal of $C_0(X)$ such that $\varphi_\E(C_0(W))\E=\E C_0(W)$. Hence we are in the setting of Proposition \ref{prop:ideal cstar correspondence}. Note that $\E C_0(W)$ is isomorphic to $\E_W$ as $C^*$-correspondences over $C_0(W)$, and that $\E/\E C_0(W)$ is isomorphic to $\E_{X\backslash W}$ as $C^*$-correspondences over $C_0(X\backslash W)$. The assertions follow.  
\end{proof}

\subsection{Ideal structure}

Before describing the ideal structure of our Cuntz--Pimser algebras, we need a technical lemma about partial actions. It is a generalization of \cite[Lemma 11.18]{Giordano2018}, and the only result in this paper proved for partial actions of general discrete groups. The proof is only a minor variation of the proof in \cite{Giordano2018}, one just needs to insert the domains $D_g$ at the approprate places. We thus omit it.
\begin{lemma}\label{lem:functions vanishing}
Let $X$ be a locally compact space, $G$ a discrete group, and $\{\theta_g,D_g\}_{g\in G}$ a free partial action on $X$. For any finite set $F$ contained in $G\backslash\{ e\}$, any compact set $C$ contained in $X$, and any collection of compact sets $\{K_g\}_{g\in F}$ with $K_g$ contained in $D_g$, there exists $n\in\N$ and elements $h_j\in C_0(X)$ for $1\leq j\leq n$ such that $|h_j(x)|=1$ for all $x\in C$ and
\begin{align*}
    \sum_{j=1}^nh_j(x)\overline{h_j(\theta_{g^{-1}}(x))}=0
\end{align*}
for all $g\in F$ and $x\in K_g$.
\end{lemma}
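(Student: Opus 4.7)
The plan follows the argument of \cite[Lemma 11.18]{Giordano2018}, where the only modification needed is to insert the partial-action domains $D_g$ at the appropriate places. First, I would replace $C$ by the (still compact) superset $C' := C \cup \bigcup_{g \in F}(K_g \cup \theta_{g^{-1}}(K_g))$: a conclusion producing $|h_j(x)| = 1$ on $C'$ is strictly stronger than, and hence implies, the one on $C$. This reduction puts us in the convenient situation where both $K_g$ and $\theta_{g^{-1}}(K_g)$ sit inside the compact set on which the unit-modulus requirement is imposed.

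The heart of the argument is the construction of a finite open cover with a uniform separation property. For each $g \in F$ and each $x \in K_g$, freeness gives $\theta_{g^{-1}}(x) \neq x$; using Hausdorffness of $X$ together with continuity of $\theta_{g^{-1}}$ on $D_g$, one can choose an open neighborhood $U^x$ of $x$ with $U^x \cap \theta_{g^{-1}}(U^x \cap D_g) = \emptyset$. Compactness of $K_g$ then yields a finite subcover of $K_g$ with this property, which I would extend (by adjoining the complement of $K_g$) to a finite open cover of $X$ satisfying $U \cap \theta_{g^{-1}}(U \cap K_g) = \emptyset$ for every member $U$. Taking the common refinement over the finite set $F$ produces a single finite open cover $\{W_1, \dots, W_m\}$ of $X$ with the uniform separation property
\[
W_i \cap \theta_{g^{-1}}(W_i \cap K_g) = \emptyset \quad \text{for all } i \text{ and all } g \in F.
\]
This is the only place where the partial-action structure genuinely intervenes: one must make sure that $\theta_{g^{-1}}$ is being applied only on $D_g$, which is automatic because $W_i \cap K_g \subseteq D_g$ by hypothesis.

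Given a partition of unity $\{\phi_i\}_{i=1}^m$ subordinate to $\{W_i\}$ with $\sum_i \phi_i \equiv 1$ on a neighborhood of $C'$, and a cutoff $w \in C_c(X)$ with $w \equiv 1$ on $C'$, the functions $h_j$ are built as linear combinations of the $w\phi_i$ with unit-modulus coefficients $(\omega_{ij})$ drawn from a complex Hadamard configuration satisfying $\sum_j \omega_{ij}\overline{\omega_{kj}} = n\,\delta_{ik}$ (the normalised Fourier matrix is a natural choice). The sum calculation then gives
\[
\sum_{j=1}^n h_j(x)\,\overline{h_j(y)} \;=\; n\,w(x)\,\overline{w(y)}\sum_i \phi_i(x)\,\phi_i(y),
\]
which vanishes on all pairs $(x, \theta_{g^{-1}}(x))$ with $x \in K_g$ by the separation property of the cover, since $\phi_i(x)\,\phi_i(\theta_{g^{-1}}(x)) = 0$ for each $i$.

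The main obstacle, which is the delicate step of \cite{Giordano2018}, is arranging in addition that $|h_j(x)| = 1$ pointwise on $C'$: the naive combination above yields only $|h_j| \le 1$, with equality failing wherever several $\phi_i$ are simultaneously nonzero. The resolution there is to replace the simplex-valued map $(\phi_1,\dots,\phi_m)$ by a suitable $\mathbb{T}^n$-valued continuous twist with the same cancellation on targeted pairs, so that the image of each point of $C'$ already lies on the unit torus. Since this refinement depends only on the combinatorics of the separation cover and on properties of the coefficient matrix, it transfers verbatim to the partial-action setting once the cover $\{W_i\}$ has been constructed as above; this is the sense in which the proof is a minor variation, with the partial-action domains entering only in the cover step.
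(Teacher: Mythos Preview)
Your proposal is correct and follows precisely the approach the paper itself indicates: the paper omits the proof entirely, stating only that it is a minor variation of \cite[Lemma 11.18]{Giordano2018} with the domains $D_g$ inserted at the appropriate places, and you have carried out exactly this, correctly identifying the cover-construction step as the sole point where the partial-action domains enter. Your outline in fact supplies more detail than the paper does, including the honest acknowledgment that the unit-modulus condition requires the nontrivial torus-valued refinement from \cite{Giordano2018} rather than the naive Hadamard combination.
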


We now return to the case of partial $\mathbb{Z}$-actions. For the remainder of the section we fix a partial automorphism $\theta: U \to V$ of $X$ and a vector bundles $\mathcal{V}$ over $X$. As in the previous sections we set $\E=\Gamma_0(\V,\theta)$. From now on we will completely omit the universal covariant representation, and simply write $f$ for $\pi_\E(f)$ and $\xi$ for $t_\E(\xi)$, for $f \in C(X)$ and $\xi \in \E$.

\begin{lemma}\label{lem:conditional expectation approximation}
Assume that $\theta$ is free. Then for every $\varepsilon>0$ and every $a\in\CP(\E)$ there exist $m\in\N$ and functions $h_1,...,h_m\in C_0(X)$ such that 
    \begin{align*}
        \norm{\frac{1}{m}\sum_{j=1}^m\overline{h_j}ah_j-\Phi(a)}<\varepsilon,
    \end{align*}
    where $\Phi$ is the conditional expectation onto $\CP(\E)^0$.
\end{lemma}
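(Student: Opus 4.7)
The plan is a standard averaging argument adapted from the crossed product setting: reduce to a finite sum in the dense $*$-subalgebra, and then use Lemma~\ref{lem:functions vanishing} to construct $h_j$'s that preserve the spectral degree-$0$ part of $a$ while annihilating all other spectral components.

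\textbf{Reduction.} The linear span of products $t^n_\E(\xi)\,t^m_\E(\eta)^*$ with $\xi\in\E^{\otimes n}$, $\eta\in\E^{\otimes m}$, $n,m\geq 0$, is dense in $\CP(\E)$. Approximating these sections by compactly supported ones via Lemma~\ref{lem:frame for submodule}, and noting that the map $a\mapsto\frac{1}{m'}\sum_j \overline{h_j}\,a\,h_j$ is norm-contractive whenever $\|h_j\|_\infty\leq 1$, it suffices to establish the estimate with error $0$ for a fixed finite sum $a=\sum_i \xi_i\eta_i^*$, where each $\xi_i,\eta_i$ is compactly supported and the spectral degrees $n_i-m_i$ are bounded by some $N$. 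A standard three-$\varepsilon$ argument, using contractivity of both the averaging map and $\Phi$, then delivers the estimate for the original $a$.

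\textbf{Conjugation formula.} For $h\in C_0(X)$, the identity $\varphi_{\E^{\otimes n}}(h)\xi=\xi\,(h\circ\theta^n)$ (with $h\circ\theta^n$ extended by zero off $D_{-n}$) translates in $\CP(\E)$ into $\pi_\E(h)\,t^n_\E(\xi)=t^n_\E(\xi)\,\pi_\E(h\circ\theta^n)$; taking adjoints yields $t^m_\E(\eta)^*\,\pi_\E(h)=\pi_\E(h\circ\theta^m)\,t^m_\E(\eta)^*$. Composing these,
\begin{equation*}
    \overline{h}\,\bigl(\xi\,\eta^*\bigr)\,h \;=\; \xi\cdot\bigl[(\overline{h}\circ\theta^n)(h\circ\theta^m)\bigr]\cdot\eta^*.
\end{equation*}
When $n=m$ the middle factor equals $|h\circ\theta^n|^2$, which is identically $1$ on $\supp\xi$ whenever $|h|=1$ on $\theta^n(\supp\xi)$. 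When $n\neq m$, the substitution $y=\theta^n(x)$ (which, by Remark~\ref{rem:domains partial action}, maps $\supp\xi\cap D_{-m}$ into $\theta^n(D_{-n}\cap D_{-m})=D_n\cap D_{n-m}$) shows that the middle factor vanishes on $\supp\xi$ precisely when $\sum_j h_j(y)\,\overline{h_j(\theta^{m-n}(y))}=0$ on the corresponding compact subset of $D_{n-m}$.

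\textbf{Applying Lemma~\ref{lem:functions vanishing}.} Invoke the lemma with $G=\Z$, with $F=\{n_i-m_i : n_i\neq m_i\}\subset\Z\setminus\{0\}$, with compact sets
\[
    K_k \;=\; \bigcup_{n_i-m_i=k}\theta^{n_i}\bigl(\supp\xi_i\cap D_{-m_i}\bigr)\;\subset\; D_k,
\]
and with $C$ a compact set containing all of $\theta^{n_i}(\supp\xi_i)\cup\theta^{n_i}(\supp\eta_i)$ over the diagonal indices $n_i=m_i$. The lemma produces $h_1,\dots,h_{m'}\in C_0(X)$ with $|h_j|=1$ on $C$ and the cancellation identity on each $K_k$; multiplying by a cutoff $\chi\in C_c(X)$ equal to $1$ on $C\cup\bigcup_k K_k$ arranges $\|h_j\|_\infty\leq 1$. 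By the conjugation formula, every diagonal term $\xi_i\eta_i^*$ is fixed by each $\overline{h_j}(\cdot)h_j$, while the average over $j$ of $\overline{h_j}\,\xi_i\eta_i^*\,h_j$ vanishes for every off-diagonal term. Therefore $\frac{1}{m'}\sum_j\overline{h_j}\,a\,h_j=\Phi(a)$ exactly for the approximant, which combined with the reduction step yields the claim.

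\textbf{Main obstacle.} The delicate part is the domain bookkeeping, verifying that $\theta^{n_i}(\supp\xi_i\cap D_{-m_i})$ sits in $D_{n_i-m_i}$ and that a single family $\{h_j\}$ can simultaneously achieve unimodularity on $C$ and the cancellation identities on all the $K_k$. This is also where freeness of $\theta$ is used: Lemma~\ref{lem:functions vanishing} constructs the $h_j$ via a partition-of-unity separating each $x$ from $\theta_{g^{-1}}(x)$ for $g\neq 0$, which is impossible in the presence of periodic points.
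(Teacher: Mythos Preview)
Your overall strategy coincides with the paper's: reduce to a finite sum of homogeneous terms, use the commutation relation $f\cdot b_n=b_n\cdot(f\circ\theta^n)$ for degree-$n$ elements, and invoke Lemma~\ref{lem:functions vanishing} to kill the off-diagonal part while fixing the diagonal. The conjugation formula and the three-$\varepsilon$ bookkeeping are fine.

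There is, however, a genuine gap in exactly the place you flag as the ``main obstacle''. Your compact sets
\[
K_k=\bigcup_{n_i-m_i=k}\theta^{n_i}\bigl(\supp\xi_i\cap D_{-m_i}\bigr)
\]
need not be compact: $\supp\xi_i$ is compact in $D_{-n_i}$, but $\supp\xi_i\cap D_{-m_i}$ is only the intersection of a compact set with an open set. Concretely, take $X=\mathbb{R}$, $U=(-1,1)$, $V=(0,2)$, $\theta(x)=x+1$, $n_i=1$, $m_i=2$, and $\supp\xi_i=[-\tfrac12,\tfrac12]$; then $\supp\xi_i\cap D_{-2}=[-\tfrac12,0)$, whose image $[\tfrac12,1)$ under $\theta$ has closure $[\tfrac12,1]\not\subset D_{-1}=(-1,1)$. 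So Lemma~\ref{lem:functions vanishing} cannot be applied with your $K_k$ as written.

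The paper sidesteps this via Proposition~\ref{prop: inductive limit}: it first approximates $a$ by some $b\in\CP(\E^{(W)})$ with $\overline{W}$ compact in $U$, and then observes that each $\overline{D_n^{(W)}}$ is compact and contained in $D_n$, so one can simply take $K_n=\overline{D_n^{(W)}}$. Your framework can be repaired more directly: since $\xi_i f\eta_i^*=0$ is equivalent to $f\langle\eta_i,\eta_i\rangle_\E$ vanishing on $\supp\xi_i$, it suffices that the averaged middle factor vanish on $\supp\xi_i\cap\supp\eta_i$, which \emph{is} a compact subset of $D_{-n_i}\cap D_{-m_i}$, so $\theta^{n_i}(\supp\xi_i\cap\supp\eta_i)$ is a compact subset of $D_{n_i-m_i}$. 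Replacing your $K_k$ by $\bigcup_{n_i-m_i=k}\theta^{n_i}(\supp\xi_i\cap\supp\eta_i)$ closes the gap.
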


\begin{proof}
    Take $\varepsilon>0$ and $a\in\CP(\E)$. According to Proposition \ref{prop: inductive limit}, there exists an open set $W$ whose closure is compact and contained in $U$, and such that $\norm{a-b}<\varepsilon/2$ for some $b$ in $\CP(\E^{(W)})$. The algebraic direct sum over all spectral subspaces $\CP(\E^{(W)})^n$ is dense in $\CP(\E^{(W)})$. Therefore, by Proposition \ref{prop:tensor product fullness}, we can assume that $b$ has the form $\sum_{n=-N}^N g_n b_n$ for $g_n\in C_0(D^{(W)}_{n})$, $b_n\in\CP(\E^{(W)})^n$, and some $N\in\N$. We can further assume that the fibers of $g_0 b_0$ in the fixed point algebra (see Theorem \ref{thm: fixed point algebra}) vanish outside of some compact set $C$ contained in $U$.

     Because the closure of $W$ is compact and contained in $U$, one can show using the formulae $D^{(W)}_n=\theta(D^{(W)}_{n-1}\cap D^{(W)}_{-1})$ and $D^{(W)}_{-n}=\theta(D^{(W)}_{1-n}\cap D^{(W)}_{1})$ that the closure of $D^{(W)}_n$ is compact and contained in $D_n$, for every $n\in\Z$. Define $K_n$ to be the closure of $D^{(W)}_n$. Using Lemma \ref{lem:functions vanishing} we obtain $m\in N$ and functions $h_j\in C_0(X)$, $1\leq j\leq m$, such that $|h_j(x)|=1$ for all $x\in C$ and for $-N\leq n\leq N$, $n\neq 0$, we have
    \begin{align}\label{eq:lem:conditional expectation approximation 2}
        \sum_{j=1}^m\overline{h_j(\theta^n(x))}h_j(x)=0, \quad \text{for all }x\in K_{-n}.
    \end{align}
     
    Now we calculate
   \begin{align*}
       \frac{1}{m}\sum_{j=1}^m\overline{h}_jbh_j&=\frac{1}{m}\sum_{j=1}^m\sum_{n=-N}^N\overline{h}_jg_n b_nh_j\\&=\frac{1}{m}\sum_{j=1}^m\sum_{n=-N}^N b_n(g_n\overline{h}_j\circ\theta^n)h_j=(\star).
   \end{align*}
   Each $g_n$ vanishes outside of $D^{(W)}_{n}$, and hence each $g_n\overline{h}_j\circ\theta^n$ vanishes outside of $D^{(W)}_{-n}$ which is contained in $K_{-n}$. Equation (\ref{eq:lem:conditional expectation approximation 2}) yields
   \begin{align*}
       \sum_{j=1}^m(g_n\overline{h}_j\circ\theta^n)h_j=0
   \end{align*}
   for all $n\neq0$. Hence
   \begin{align*}
       (\star)&=\frac{1}{m}\sum_{j=1}^m b_0g_0\overline{h}_jh_j=\frac{1}{m}\sum_{j=1}^m b_0g_0\\&= b_0g_0=\Phi(b).
   \end{align*}
   We obtain
   \begin{align*}
        \norm{\frac{1}{m}\sum_{j=1}^m\overline{h_j}ah_j-\Phi(a)}&\leq \norm{\frac{1}{m}\sum_{j=1}^m\overline{h_j}ah_j-\frac{1}{m}\sum_{j=1}^m\overline{h_j}bh_j}+\norm{\frac{1}{m}\sum_{j=1}^m\overline{h_j}bh_j-\Phi(a)}\\&=\norm{\frac{1}{m}\sum_{j=1}^m\overline{h_j}(a-b)h_j}+\norm{\Phi(b)-\Phi(a)}\\&\leq \frac{1}{m}\sum_{j=1}^m\norm{\overline{h_j}}\norm{a-b}\norm{h_j}+\norm{\Phi}\norm{b-a}\\&= \norm{a-b}+\norm{b-a}<\varepsilon,
   \end{align*}
   as required.
\end{proof}

\begin{proposition}\label{prop:ideal intersection with fixed point algebra}
Assume that the partial automorphism is free. If $J$ is a nonzero ideal in $\CP(\E)$, then $J$ has non-trivial intersection with the fixed point algebra, that is,
\begin{align*}
\CP(\E)^0\cap J\neq\{0\}.
\end{align*}
In fact,
\begin{align*}
    C_0(X)\cap J\neq\{0\}.
\end{align*}
\end{proposition}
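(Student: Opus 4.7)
For the first assertion, I would take a nonzero $a \in J$, replace it by $a^*a$ to assume $a \geq 0$, and apply Lemma~\ref{lem:conditional expectation approximation}: $\Phi(a)$ is a norm-limit of averages $\tfrac{1}{m}\sum_j \overline{h_j} a h_j$, each of which lies in $J$ because $J$ is a two-sided ideal. Closedness of $J$ then forces $\Phi(a) \in J$, and since $\Phi$ is faithful on positive elements (as noted in Section~\ref{sec:grading and the fixed point algebra}), $\Phi(a) \neq 0$. Because $\Phi(a) \in \CP(\E)^0$, this establishes $\CP(\E)^0 \cap J \neq \{0\}$.

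For the sharper statement, pick a positive nonzero $b \in J \cap \CP(\E)^0$ from the previous step. By Theorem~\ref{thm: fixed point algebra}, $\CP(\E)^0$ is a continuous $C_0(X)$-algebra whose fibers $\CP(\E)^0_x$ are simple unital $C^*$-algebras. Continuity of the field makes $x \mapsto \|b_x\|$ an element of $C_0(X)$, so $W := \{x \in X : b_x \neq 0\}$ is a nonempty open subset of $X$. Let $K \subseteq \CP(\E)^0$ denote the closed two-sided ideal generated by $b$; then $K \subseteq J$, and simplicity of the fibers gives $K_x = \CP(\E)^0_x$ for $x \in W$ and $K_x = 0$ otherwise. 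For any $f \in C_0(W)$, the element $\pi_\E(f) \in \CP(\E)^0$ satisfies $\pi_\E(f)_x = f(x) \cdot 1_{\CP(\E)^0_x}$, which lies in $K_x$ at every $x$. By the standard Dauns--Hofmann correspondence---which in a continuous $C_0(X)$-algebra with simple fibers puts ideals in bijection with open subsets of $X$---this fiberwise containment forces $\pi_\E(f) \in K \subseteq J$. Hence $\pi_\E(C_0(W)) \subseteq J \cap \pi_\E(C_0(X))$, which is nonzero because $W$ is nonempty and $\pi_\E$ is injective.

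The main obstacle is the last implication, namely passing from fiberwise membership of $\pi_\E(f)$ in $K$ to global membership. To avoid invoking the Dauns--Hofmann correspondence as a black box, one can argue directly: for each $x \in W$, lift an expression $\sum_i c_i b_x d_i$ close to $1_{\CP(\E)^0_x}$ from the simple fiber to obtain an element $e_x \in K$ with $\|(e_x)_x - 1_{\CP(\E)^0_x}\|$ small; use continuity of the norm on the field to find a neighborhood of $x$ on which $(e_x)_y$ is uniformly close to the fiberwise identity; and use a partition of unity on a compact subset of $W$ together with functional calculus to approximate $\pi_\E(f)$ in norm by elements of $K$.
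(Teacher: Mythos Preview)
Your proposal is correct and follows essentially the same route as the paper. For the first assertion you use Lemma~\ref{lem:conditional expectation approximation} and faithfulness of $\Phi$ exactly as the paper does; for the second, the paper invokes Fell's description of ideals in a continuous field (\cite[Lemma 1.8]{Fell:1961}) in place of your Dauns--Hofmann formulation, but the content is the same: simplicity of the fibres forces $I_x\in\{0,\CP(\E)^0_x\}$, continuity gives a nonempty open set $W$ on which $I_x$ is full, and then unitality of the fibres puts $C_0(W)$ inside $I$. Your explicit partition-of-unity sketch is a perfectly good substitute for citing Fell.
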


\begin{proof}
Let $J$ be a nonzero ideal in $\CP(\E)$ and $a\in J$. Choose $\varepsilon>0$. According to Lemma \ref{lem:conditional expectation approximation} there exist $m\in\N$ and functions $h_1,...,h_m\in C_0(X)$ such that $\frac{1}{m}\sum_{j=1}^m\overline{h_j}ah_j$ approximates $\Phi(a)$ up to $\varepsilon$. Clearly $\frac{1}{m}\sum_{j=1}^m\overline{h_j}ah_j$ lies in $J$. Since $\varepsilon$ was arbitrary and $J$ is closed, this implies that $\Phi(a)$ lies in $J$. 

Let $a$ in $J$ be nonzero. Then $a^*a$ is nonzero, positive and lies in $J$. We have $\Phi(a^*a)\neq0$, because $\Phi$ is faithful. This proves that the intersection of $J$ with the fixed point algebra is nontrivial. 

To show that $C_0(X) \cap J \neq \{0\}$, recall that the fixed point algebra is a continuous $C_0(X)$-algebra by Theorem \ref{thm: fixed point algebra}. Hence by \cite[Lemma 1.8]{Fell:1961}, every ideal $I$ in $\CP(\E)^0$ is of the form
\begin{align}\label{eq:ideal C(X) algebra}
    I=\{a\in\CP(\E)^0:a_x\in I_x\;\text{for all }x\in X\}
\end{align}
where $I_x=\{a(x):a\in I\}$. It is not hard to see that $I_x$ is an ideal in $\CP(\E)^0_x$ for all $x\in X$. Now write $I\coloneqq \CP(\E)^0\cap J$. Clearly $I$ is an ideal in the fixed point algebra. Since the fibers of the fixed point algebra are simple, we either have $I_x=0$ or $I_x=\CP(\E)^0_x$. There must be an open set in $X$ such that the fibers of $I$ over the set are all nonzero, because if the set of points such that $I_x=0$ were dense, then $I$ would be zero by (\ref{eq:ideal C(X) algebra}) and continuity of the fixed point algebra. Again by (\ref{eq:ideal C(X) algebra}), and because all the fibers of the fixed point algebra are unital, this implies that there is a nonzero continuous function contained in $I$, and hence in $J$.
\end{proof}
\begin{proposition}\label{prop:invariant}
Let $J$ be an ideal of $\CP(\E)$. There exists a $\theta$-invariant open subset $W$ of $X$ such that $C_0(W)=J\cap C_0(X)$.
\end{proposition}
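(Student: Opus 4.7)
The plan is to let $W$ be the unique open subset of $X$ with $C_0(W) = J \cap C_0(X)$ (which exists because $J \cap C_0(X)$ is a closed ideal of the commutative $C^*$-algebra $C_0(X)$), and then to verify the two inclusions $\theta^{-1}(W \cap V) \subseteq W$ and $\theta(W \cap U) \subseteq W$ characterising $\theta$-invariance of an open set.

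For the pull-back inclusion, given $y \in W \cap V$ I would pick $g \in C_0(W)$ with $g(y) \neq 0$ and a section $\xi \in \E$ with $\xi(\theta^{-1}(y)) \neq 0$ (available since $\V$ has a nonzero fibre over every point of $U$). A direct computation in the universal representation yields
\[
t_\E(\xi)^*\,\pi_\E(g)\,t_\E(\xi) \;=\; \pi_\E\bigl(\langle\xi,\varphi_\E(g)\xi\rangle_\E\bigr),
\]
whose argument is the scalar function $(g\circ\theta)\,\langle\xi,\xi\rangle_\E$ on $U$ (extended by zero), taking the nonzero value $g(y)\,|\xi(\theta^{-1}(y))|^2$ at $\theta^{-1}(y)$. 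Since $\pi_\E(g) \in J$ and $J$ is a two-sided ideal, this function lies in $J \cap C_0(X) = C_0(W)$, so $\theta^{-1}(y) \in W$.

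The push-forward inclusion is harder: the analogous expression $t_\E(\xi)\pi_\E(f)t_\E(\xi)^* = \psi_{t_\E}(\theta_{\xi f,\xi})$ lands in $B_1 \cong \K(\E)$ rather than in $\pi_\E(C_0(X))$, and the frame-averaged version $\sum_i t_\E(\xi_i)\pi_\E(f)t_\E(\xi_i)^*$ would only converge in norm when $f$ lies in Katsura's ideal $C_0(V)$---a condition not automatic for $f \in C_0(W)$. I would sidestep this by exploiting the $C_0(X)$-algebra structure of $\CP(\E)^0$ afforded by Theorem \ref{thm: fixed point algebra}. Set $\mathfrak{J} \coloneqq J \cap \CP(\E)^0$. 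Since $\CP(\E)^0$ is a \emph{continuous} $C_0(X)$-algebra whose fibres are all simple (Theorem \ref{thm: fixed point algebra}), \cite[Lemma 1.8]{Fell:1961} implies that $\mathfrak{J}$ is determined by its fibrewise ideals $\mathfrak{J}_y \subseteq \CP(\E)^0_y$, each of which is therefore either $\{0\}$ or the entire fibre. Define $Y \coloneqq \{y \in X : \mathfrak{J}_y \neq 0\}$, an open set (as a union of norm-open supports of elements of $\mathfrak{J}$). Using the fibrewise identity $\pi_\E(f)_y = f(y)\cdot 1_{\CP(\E)^0_y}$, one checks $W = Y$: the inclusion $W \subseteq Y$ is immediate, and for $Y \subseteq W$ one picks $f \in C_0(X)$ with $f(y) \neq 0$ and $\supp f \subseteq Y$, notes that $\pi_\E(f)_{y'} \in \mathfrak{J}_{y'}$ for every $y'$ (trivially when $f(y') = 0$, and because $\mathfrak{J}_{y'}$ is the full fibre when $y' \in Y$), concludes $\pi_\E(f) \in \mathfrak{J}$, and hence $f \in C_0(W)$.

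With $W = Y$ in hand, the push-forward inclusion is a fibre calculation: for $x \in W \cap U$, choose $f \in C_0(W)$ with $f(x) \neq 0$ and $\xi \in \E$ with $\xi(x) \neq 0$, and set $a \coloneqq t_\E(\xi)\pi_\E(f)t_\E(\xi)^* \in \mathfrak{J}$. Computing its $\varphi_\E$-fibre at $\theta(x) \in V$---via the identification $\E_{\varphi,\theta(x)} \cong \E_x \cong \C^{d(x)}$ from Section \ref{sec:hilbert bimodules}---produces the rank-one operator $v \mapsto f(x)\xi(x)\langle\xi(x),v\rangle$, of norm $|f(x)|\,\|\xi(x)\|^2 \neq 0$. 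Hence $\mathfrak{J}_{\theta(x)} \neq 0$, so $\theta(x) \in Y = W$. The main obstacle throughout is the inherent asymmetry between pulling back and pushing forward a function through the twisted correspondence; the simplicity of the fibres of $\CP(\E)^0$, supplied by Theorem \ref{thm: fixed point algebra}, is exactly the ingredient that restores this symmetry.
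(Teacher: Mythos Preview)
Your argument is correct, but the push-forward half takes a substantially heavier route than the paper's. Your pull-back step is essentially the paper's: where you work pointwise with a single $t_\E(\xi)^*\pi_\E(g)t_\E(\xi)$, the paper sums such expressions to approximate $f\circ\theta$ directly, but the underlying mechanism is identical. For the push-forward, however, the paper avoids Theorem~\ref{thm: fixed point algebra} entirely. Your worry that the frame-averaged sum $\sum_i t_\E(\xi_i)\pi_\E(f)t_\E(\xi_i)^*$ fails to converge is unfounded in the case that matters: one only needs $f\in C_0(W)\cap C_0(U)$, and then $f\circ\theta^{-1}\in C_0(V)=J_\E$, so $\varphi_\E(f\circ\theta^{-1})\in\K(\E)$ by Lemma~\ref{lem:katsura ideal}. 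Covariance gives $\pi_\E(f\circ\theta^{-1})=\psi_{t_\E}(\varphi_\E(f\circ\theta^{-1}))$, and Lemma~\ref{lem:frame induces approx unit in compacts} then makes the frame sum converge to this element, which lies in $J$ since each summand $t_\E(\xi_i)\pi_\E(f)t_\E(\xi_i)^*$ does. This is the paper's argument: short, elementary, and independent of the fibre structure of $\CP(\E)^0$.

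Your detour through the continuous-field description does work, but note a small gap you glossed over: to conclude $\mathfrak{J}_{\theta(x)}\neq 0$ from the nonvanishing of the $\varphi_\E$-fibre of $a=\psi_1(\theta_{\xi f,\xi})$ at $\theta(x)$, you need that the map $(B_1)_{\theta(x)}\to(\CP(\E)^0)_{\theta(x)}$ induced on fibres by the inclusion $B_1\hookrightarrow\CP(\E)^0$ is injective. This is true, and is established in the course of the proof of Theorem~\ref{thm: fixed point algebra} (the inclusions $(B_n)_x\to(B_{[0,n]})_x$ are identities and the connecting maps $\phi^n_x$ are $\,{-}\otimes I$ or identities), but it deserves a sentence. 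The upshot is that your approach buys nothing here and costs the full weight of Theorem~\ref{thm: fixed point algebra}; the paper's direct argument is both shorter and logically prior.
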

\begin{proof}
Write $I:=J\cap C_0(X)$. We will show that $\{f\circ\theta:f\in I\cap C_0(V)\}$ and $\{f\circ\theta^{-1}:f\in I\cap C_0(U)\}$ are both contained in $I$.

Let $\{v_n\}_{n\in\N}$ be an approximate identity for $C_0(U)$. Take $f$ in $I\cap C_0(V)$ and $\varepsilon>0$. Then $f\circ\theta$ lies in $C_0(U)$. Thus there exists $n_0\in\N$ such that $v_n(f\circ\theta)$ and $f\circ\theta$ are $\varepsilon$-close for all $n\geq n_0$. 

By Remark \ref{rem:sigma fp module is full} we have $\langle\E,\E\rangle_\E=C_0(U)$. Hence for any $n\geq n_0$ there exist $k_n\in\N$ and elements $\xi_1^n,\eta_1^n,...,\xi_{k_n}^n,\eta_{k_n}^n\in\E$ such that the linear combination $\sum_{i=1}^{k_n}\xi_i^*\eta_i$ is $\varepsilon/\norm{f}$-close to $v_n$. Then for every $n\geq n_0$ we have
\[
I\ni \sum_{i=1}^{k_n}(\xi_i^n)^*f\eta_i^n=\sum_{i=1}^{k_n}(\xi_i^n)^*\eta_i^n(f\circ\theta)\approx_\varepsilon v_n(f\circ\theta)\approx_\varepsilon (f\circ\theta).
\]
Since $\varepsilon$ was arbitrary and $I$ is closed, this shows that $f\circ\theta$ lies in $I$.

If $f$ lies in $I\cap C_0(U)$ then essentially the same argument, using that $C_0(V)=\varphi_\E^{-1}(\K(\E))$ holds by Lemma \ref{lem:katsura ideal}, shows that $f\circ\theta^{-1}$ lies in $I$.

To finish the proof, note that $I$ is an ideal of $C_0(X)$. Hence there exists an open subset $W$ of $X$ such that $I=C_0(W)$. It follows from the first part of the proof that $W$ is $\theta$-invariant.
\end{proof}

We are now ready to prove the main result of this section. It is a straightforward adaptation of \cite[Theorem 29.9]{Exel:2017}, the only caveat being that we require freeness instead of the weaker notion of topological freeness. 
\begin{theorem}\label{thm:ideals in Cuntz--Pimsner algebra}
    Assume that the partial automorphism $\theta$ is free. Then there is a bijective correspondence between $\theta$-invariant open subsets of $X$ and ideals in $\CP(\E)$, given by mapping an invariant open set $W$ to $\CP(\E)C_0(W)$. 
\end{theorem}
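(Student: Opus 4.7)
The plan is to construct an explicit inverse to the map $\Phi : W \mapsto \CP(\E)C_0(W)$. Given an ideal $J \lhd \CP(\E)$, Proposition~\ref{prop:invariant} produces a unique $\theta$-invariant open set $W = W(J) \subseteq X$ with $J \cap C_0(X) = C_0(W)$; define $\Psi(J) \coloneqq W(J)$. I will verify the two compositions are identities.

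\textbf{Step 1: $\Psi \circ \Phi = \mathrm{id}$.} Fix a $\theta$-invariant open $W \subseteq X$ and let $J = \CP(\E)C_0(W)$. By Proposition~\ref{prop: short exact sequence} the quotient map $q : \CP(\E) \to \CP(\E_{X\setminus W})$ has kernel $J$, and its restriction to $C_0(X) \subseteq \CP(\E)$ is the standard restriction map $f \mapsto f\vert_{X\setminus W} \in C_0(X\setminus W) \subseteq \CP(\E_{X\setminus W})$. If $f \in C_0(X) \cap J$, then $f\vert_{X\setminus W} = 0$, whence $f \in C_0(W)$. Conversely, if $f \in C_0(W)$, factoring $f$ through any approximate unit for $C_0(W)$ yields $f \in C_0(W) \cdot C_0(W) \subseteq \CP(\E) \cdot C_0(W) = J$. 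Hence $J \cap C_0(X) = C_0(W)$, i.e. $\Psi(\Phi(W)) = W$.

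\textbf{Step 2: $\Phi \circ \Psi = \mathrm{id}$.} Fix an ideal $J$ and let $W = \Psi(J)$, so $J \cap C_0(X) = C_0(W)$. The inclusion $\CP(\E)C_0(W) = \CP(\E)(J \cap C_0(X)) \subseteq J$ is immediate. For the reverse inclusion, consider again $q : \CP(\E) \to \CP(\E_{X\setminus W})$, and put $\overline{J} \coloneqq q(J)$. Since $W$ is $\theta$-invariant, $\theta$ restricts to a partial automorphism $\theta_{X\setminus W}$ on the closed invariant set $X \setminus W$, and freeness of $\theta$ is inherited by $\theta_{X\setminus W}$. If $\overline{J}$ were nonzero, then by Proposition~\ref{prop:ideal intersection with fixed point algebra} applied to $\CP(\E_{X\setminus W})$ we would have $\overline{J} \cap C_0(X \setminus W) \neq \{0\}$. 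Pick $a \in J$ with $q(a) \in C_0(X\setminus W)$ nonzero. Since $q$ restricted to $C_0(X)$ surjects onto $C_0(X\setminus W)$, there is $f \in C_0(X)$ with $q(f) = q(a)$. Then $a - f \in \ker q = \CP(\E)C_0(W) \subseteq J$, so $f \in J \cap C_0(X) = C_0(W)$, which forces $q(f) = 0$, contradicting $q(a) \neq 0$. Hence $\overline{J} = 0$, i.e. $J \subseteq \CP(\E)C_0(W)$.

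The main obstacle is Step~2, and more specifically arranging that the quotient correspondence to which one applies Proposition~\ref{prop:ideal intersection with fixed point algebra} has a free partial automorphism on its base space; this is precisely where the $\theta$-invariance of $W$ (delivered by Proposition~\ref{prop:invariant}) combines with freeness of $\theta$ to make the short exact sequence of Proposition~\ref{prop: short exact sequence} available in the form needed. Everything else is a matter of carefully tracking the natural identifications $C_0(X)/C_0(W) \cong C_0(X \setminus W)$ through the surjection $q$.
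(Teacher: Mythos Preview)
Your proof is correct and follows essentially the same route as the paper: both use Proposition~\ref{prop: short exact sequence} to pass to the quotient $\CP(\E_{X\setminus W})$ and then invoke Proposition~\ref{prop:ideal intersection with fixed point algebra} there to force $q(J)=0$ via the same $a,f$ argument. Your write-up is organized as constructing an explicit two-sided inverse rather than checking injectivity/surjectivity, and you make the inheritance of freeness by $\theta_{X\setminus W}$ explicit where the paper leaves it implicit, but the substance is identical.
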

\begin{proof}
    That $\CP(\E)C_0(W)$ is indeed an ideal of $\CP(\E)$ whenever $W$ is a $\theta$-invariant open set follows from Proposition \ref{prop: short exact sequence}. Since the intersection of $\CP(\E)C_0(W)$ with $C_0(X)$ is $C_0(W)$, the correspondence is injective. It is left to show that it is surjective.

    Let $J$ be an ideal of $\CP(\E)$. By Propositions \ref{prop:ideal intersection with fixed point algebra} and \ref{prop:invariant} there exists a non-empty, $\theta$-invariant open set $W$ such that $J\cap C_0(X)=C_0(W)$. Since $J$ contains $C_0(W)$ and is an ideal, we obtain that $\CP(\E)C_0(W)$ is contained in $J$. To see that $J \subset \CP(\E)C_0(W)$, consider the short exact sequence 
\[\begin{tikzcd}
	0 & {\mathcal{O}(\mathcal{E})C_0(W)} & {\mathcal{O}(\mathcal{E})} & {\mathcal{O}(\mathcal{E}_{X\backslash W})} & 0
	\arrow[from=1-1, to=1-2]
	\arrow[from=1-2, to=1-3]
	\arrow[from=1-4, to=1-5]
	\arrow["q", from=1-3, to=1-4]
\end{tikzcd}\]
from Proposition \ref{prop: short exact sequence}. We claim that $q(J)$ has trivial intersection with $C_0(X\backslash W)$. By Proposition \ref{prop:ideal intersection with fixed point algebra} this would imply that $q(J)$ is trivial, and hence $J$ is contained in $\ker q=\CP(\E) C_0(W)$. 

Take $a$ from $q(J)\cap C_0(X\backslash W)$. There exists $b\in J$ such that $q(b)=a$. Since $q(C_0(X))=C_0(X\backslash W)$ there also exists $f\in C_0(X)$ such that $q(f)=a$. Then $b-f$ lies in $\CP(\E)C_0(W)$. Since $\CP(\E)C_0(W)$ is contained in $J$ and $b$ lies in $J$ we obtain that $f$ lies in $J\cap C_0(X)=C_0(W)$. Hence $a=q(f)=0$, and we are done.
\end{proof}
\begin{corollary}\label{cor:simplicity}
    If the partial automorphism $\theta$ is free, then the Cuntz--Pimsner algebra $\CP(\E)$ is simple if and only if $\theta$ is minimal. 
\end{corollary}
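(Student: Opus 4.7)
The corollary should follow almost immediately from Theorem \ref{thm:ideals in Cuntz--Pimsner algebra} together with the definition of minimality, so my plan is just to carefully translate the bijective correspondence into the simplicity statement.

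First I would recall that $\CP(\E)$ is simple precisely when its only two-sided closed ideals are $\{0\}$ and $\CP(\E)$ itself. Under the freeness assumption, Theorem \ref{thm:ideals in Cuntz--Pimsner algebra} provides a bijection $W \mapsto \CP(\E)C_0(W)$ between $\theta$-invariant open subsets of $X$ and ideals of $\CP(\E)$. So what I need to verify is that under this bijection the zero ideal corresponds to $W=\emptyset$ and the whole algebra corresponds to $W=X$.

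The first identification is immediate since $C_0(\emptyset) = \{0\}$. For the second, I would argue that $\CP(\E)C_0(X) = \CP(\E)$: take an approximate unit $(u_i)$ of $C_0(X)$; since $\xi u_i \to \xi$ for every $\xi \in \E$ and $f u_i \to f$ for every $f \in C_0(X)$, and since $\CP(\E)$ is generated as a $C^*$-algebra by the images of $\E$ and $C_0(X)$ under the universal covariant representation, the elements $\pi_\E(u_i)$ form an approximate unit for $\CP(\E)$, which forces $\CP(\E)C_0(X) = \CP(\E)$.

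With these two identifications in place, I can conclude: $\CP(\E)$ has only the two trivial ideals if and only if $X$ has only the two trivial $\theta$-invariant open subsets $\emptyset$ and $X$, which is exactly minimality of $\theta$. I do not anticipate any real obstacle; the entire work was already done in proving Theorem \ref{thm:ideals in Cuntz--Pimsner algebra}, and this corollary is merely the specialization of that classification to the extremal case.
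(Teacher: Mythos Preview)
Your proposal is correct and matches the paper's intended approach: the corollary is stated in the paper without proof, as an immediate consequence of Theorem~\ref{thm:ideals in Cuntz--Pimsner algebra}. Your careful verification that the endpoints $W=\emptyset$ and $W=X$ of the bijection correspond to the trivial ideals is exactly the routine check the paper leaves implicit.
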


\subsection{The tracial state space}
We now investigate the tracial state space of our Cuntz--Pimsner algebra, which we denote by $T(\CP(\E))$. It is of great interest since it is an important part of the Elliott invariant. 

The set-up is exactly the same as in the previous sections. The statements as well as their proofs are very similar to the ones in Section 4 of \cite{adamo2023}. 

\begin{lemma}\label{lem:changing order}
Suppose that $\V$ is a line bundle. Then, for every $j>0$ and for every $\xi_j, \eta_j \in \CP(\E)^{j}\cong\E^{\otimes j}$ we have 
\begin{align*}
   \xi_j^*\eta_j\in C_0(D_{-j}),\;\eta_j\xi_j^*\in C_0(D_j)\quad\text{and}\quad\xi_j^*\eta_j=\eta_j\xi_j^*\circ\theta^{j}. 
\end{align*}  
\end{lemma}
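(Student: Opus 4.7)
The plan is to work inductively on $j$, using that $\V$ being a line bundle lets us invoke the bimodule structure from Proposition~\ref{prop:line bundle bimodule} together with Proposition~\ref{prop:tensor product fullness}. Throughout, I identify $\CP(\E)^j$ with $\E^{\otimes j}$ via the universal covariant representation, so that in $\CP(\E)$ the product $\xi_j^*\eta_j$ equals $\langle\xi_j,\eta_j\rangle_{\E^{\otimes j}}\in C_0(X)$; this already yields the first claim, since Proposition~\ref{prop:tensor product fullness} gives $\langle\E^{\otimes j},\E^{\otimes j}\rangle_{\E^{\otimes j}}=C_0(D_{-j})$. By bilinearity and norm-continuity of multiplication in $\CP(\E)$, it suffices to prove the remaining two identities for elementary tensors of the form $\xi_j=\xi\otimes\xi'_{j-1}$ and $\eta_j=\eta\otimes\eta'_{j-1}$ with $\xi,\eta\in\E$ and $\xi'_{j-1},\eta'_{j-1}\in\E^{\otimes(j-1)}$.

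For the base case $j=1$, Proposition~\ref{prop:line bundle bimodule} gives $\eta\xi^*={}_\E\langle\eta,\xi\rangle=\langle\xi,\eta\rangle_\E\circ\theta^{-1}$; this function is supported on $\theta(U)=V=D_1$, proving the second claim, and composing with $\theta$ on $D_{-1}$ immediately recovers $\xi^*\eta=\langle\xi,\eta\rangle_\E$, which is the third claim. For the induction step, assume the three identities hold at level $j-1$, so in particular $\eta'_{j-1}(\xi'_{j-1})^*$ is a function $f\in C_0(D_{j-1})$. I rewrite
\[
\eta_j\xi_j^* \;=\; \eta\cdot f\cdot\xi^* \;=\; (\eta f)\xi^* \;=\; {}_\E\langle\eta f,\xi\rangle \;=\; \bigl(\langle\xi,\eta\rangle_\E\cdot f\bigr)\circ\theta^{-1}\;=\;(\eta\xi^*)\cdot(f\circ\theta^{-1}),
\]
using the right-action identity $\eta f = \eta\cdot f$ and the formula for the left inner product. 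The factor $\eta\xi^*$ lives in $C_0(D_1)$ by the base case, while $f\circ\theta^{-1}$ is supported on $\theta(D_{j-1}\cap D_{-1})=D_j$ by Remark~\ref{rem:domains partial action}; hence $\eta_j\xi_j^*\in C_0(D_j)$, giving the second claim.

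For the third claim, write $g=\xi^*\eta=\langle\xi,\eta\rangle_\E\in C_0(D_{-1})$ and use the structure map for $\E^{\otimes(j-1)}$, which acts by right multiplication by $g\circ\theta^{j-1}$:
\[
\xi_j^*\eta_j \;=\; (\xi'_{j-1})^*(\xi^*\eta)\eta'_{j-1} \;=\; (\xi'_{j-1})^*\bigl(\varphi_{\E^{\otimes(j-1)}}(g)\eta'_{j-1}\bigr)\;=\;\bigl((\xi'_{j-1})^*\eta'_{j-1}\bigr)\cdot(g\circ\theta^{j-1}).
\]
Applying the inductive hypothesis, this equals $(f\circ\theta^{j-1})\cdot(g\circ\theta^{j-1})$ where I've also written the base-case identity $g=\eta\xi^*\circ\theta$. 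On the other hand, the formula for $\eta_j\xi_j^*$ above yields
\[
\eta_j\xi_j^*\circ\theta^j\;=\;(\eta\xi^*\circ\theta^j)\cdot(f\circ\theta^{-1}\circ\theta^j)\;=\;(\eta\xi^*\circ\theta^j)\cdot(f\circ\theta^{j-1}),
\]
where the simplification $\theta^{-1}\circ\theta^j=\theta^{j-1}$ on $D_{-j}$ follows from the partial-action axioms. Comparing the two expressions and using $g\circ\theta^{j-1}=(\eta\xi^*\circ\theta)\circ\theta^{j-1}=\eta\xi^*\circ\theta^j$ on $D_{-j}$ closes the induction. The only real bookkeeping hurdle is justifying that every composition $\theta^{-1}\circ\theta^j$, etc., makes sense on the appropriate domain; this is handled by Remark~\ref{rem:domains partial action} together with the definition of the domains $D_n$ in Example~\ref{ex:partial automorphism}.
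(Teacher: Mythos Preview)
Your proof is correct and follows essentially the same inductive strategy as the paper: invoke Proposition~\ref{prop:tensor product fullness} for the right inner product, use the bimodule formula from Proposition~\ref{prop:line bundle bimodule} for the base case $j=1$, and induct on $j$ by splitting off one tensor factor. The only cosmetic difference is that you decompose $\xi_j=\xi\otimes\xi'_{j-1}$ with the $\E$-factor on the left, whereas the paper writes $\xi_j=\xi_{j-1}\xi$ with the $\E$-factor on the right; both lead to the same computation, and your version has the minor advantage of making the domain bookkeeping for $\eta_j\xi_j^*\in C_0(D_j)$ explicit rather than leaving it implicit in the reference to Proposition~\ref{prop:tensor product fullness}.
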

\begin{proof}
 Note that $\E$ is a $C_0(X)$-bimodule by Proposition \ref{prop:line bundle bimodule}. Then that $\xi_j^*\eta_j$ and $\eta_j\xi_j^*$ lie in $C_0(D_{-j})$ and $C_0(D_j)$, respectively, follows from Proposition \ref{prop:tensor product fullness}. We show that  $\eta_j\xi_j^* \in C_0(D_j)$ inductively. The proof that  $\xi_j^*\eta_j \in C_0(D_{-j})$ is completely analogous. By Proposition \ref{prop:line bundle bimodule} we have
\begin{align*}
\xi^*\eta=\langle\xi,\eta\rangle_\E=\prescript{}{\E}{\langle}\eta,\xi\rangle\circ\theta=\eta\xi^*\circ\theta.
\end{align*}
for all $\eta$ and $\xi$ in $\E$. This shows the claim for $j=1$. Now assume$j < 1$ and that the claim holds for $j-1$. Let $\xi$ and $\eta$ be in $\E$ and $\xi_{j-1}$ and $\eta_{j-1}$ be in $\E^{\otimes(j-1)}$. Then
\begin{align*}
(\xi_{j-1}\xi)^*\eta_{j-1}\eta&=\xi^*\xi_{j-1}^*\eta_{j-1}\eta=\xi^*(\xi_{j-1}^*\eta_{j-1}\eta)\\&=\xi_{j-1}^*\eta_{j-1}\eta\xi^*\circ\theta.
\end{align*}
On the other hand,
\begin{align*}
\eta_{j-1}\eta(\xi_{j-1}\xi)^*=\eta_{j-1}\eta\xi^*\xi_{j-1}^*=(\eta_{j-1}\eta\xi^*)\xi_{j-1}^*=\xi_{j-1}^*\eta_{j-1}\eta\xi^*\circ\theta^{-j+1}.
\end{align*}
 The result follows. 
\end{proof}
For free actions on a compact Hausdorff space $X$, tracial states of the crossed product are in bijective correspondence with measures on $X$ that are invariant under the action. We will see that a similar correspondence holds for Cuntz--Pimsner algebras associated to partial automorphisms twisted by vector bundles. We first need to define the appropriate notion of invariant measure. The reader may wish to compare this definition to that of \cite{Scarparo:2017}, which is stated for more general groups.

\begin{definition}\label{def:invariant measure}
A Radon probability measure $\mu$ on $X$ is called \emph{$\theta$-invariant} if 
\begin{align*}
\mu(\theta^{-1}(Y))=\mu(Y)
\end{align*}x
for all measurable sets $Y\subset V$. The set of all such measures is denoted by $M^1_\theta(X)$. It is a convex subset of the dual space $C_0(X)'$, and it is compact if endowed with the weak*-topology. 
\end{definition}
\begin{remark}\label{rem:invariance of measures and states}
Let $\mu$ be a Radon probability measure on $X$. Let $\tau_\mu$ be the corresponding state on $C_0(X)$, namely 
\begin{align*}
    \tau_\mu(f)=\int_Xfd\mu,
\end{align*} for any $f\in C_0(X)$. Then $\mu$ lies in $M^1_\theta(X)$ if and only if
\begin{align*}
\tau_\mu(f\circ\theta)=\tau_\mu(f)
\end{align*}
for all $f\in C_0(V)$. This is equivalent to $\mu(\theta(Y))=\mu(Y)$ for all measurable sets $Y\subset U$, and to $\tau_\mu(f\circ\theta^{-1})=\tau_\mu(f)$ for all $f\in C_0(U)$.
\end{remark}
We have the following analogue of the classical theorem of Krylov--Bogoliubov: 
\begin{theorem}\label{thm:krylov bogoliubov}
    If $X$ is compact and Hausdorff, then for every point $x\in X$ there exists an invariant probability measure whose support is contained in $\overline{\orb(x)}$. In particular $M_\theta^1(X)$ is nonempty.
\end{theorem}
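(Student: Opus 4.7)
The plan is to adapt the classical Krylov–Bogoliubov argument, constructing an invariant measure as a weak-$\ast$ limit of Cesàro averages of Dirac masses along the orbit of $x$, with a small case analysis to handle orbits that terminate because of the partial nature of $\theta$. Writing $\Omega^+(x) = \{n \in \N_0 : x \in D_{-n}\}$ and $\Omega^-(x) = \{n \in \N_0 : x \in D_n\}$ for the forward and backward half-orbit index sets, three cases arise; the second claim of the theorem is immediate once any single invariant probability measure is produced.

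If $\Omega^+(x) = \N_0$, I would set
\[
\mu_N \coloneqq \frac{1}{N}\sum_{n=0}^{N-1}\delta_{\theta^n(x)},
\]
a Radon probability measure on the compact space $X$ supported in $\orb(x)$. Weak-$\ast$ compactness of the space of Radon probability measures on $X$ yields a convergent subsequence $\mu_{N_k} \to \mu$, and $\mu$ is automatically supported in $\overline{\orb(x)}$. To verify $\theta$-invariance via Remark~\ref{rem:invariance of measures and states} I would take $f \in C_0(V)$ (extended by $0$ on $X\setminus V$) and note that, since $\theta^n(x) \in U$ for every $n \geq 0$, $(f\circ\theta)(\theta^n(x)) = f(\theta^{n+1}(x))$, so the sum telescopes:
\[
\int_X (f\circ\theta)\, d\mu_N - \int_X f\, d\mu_N = \frac{f(\theta^N(x)) - f(x)}{N},
\]
which is bounded by $2\norm{f}/N$ and vanishes in the limit. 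If instead $\Omega^+(x)$ is finite but $\Omega^-(x) = \N_0$, a symmetric argument with the backward averages $\frac{1}{N}\sum_{n=0}^{N-1}\delta_{\theta^{-n}(x)}$ produces an invariant $\mu$ supported in $\overline{\orb(x)}$, this time tested against $f \in C_0(U)$.

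The remaining case is when both $\Omega^+(x)$ and $\Omega^-(x)$ are finite. Injectivity of $\theta$ rules out periodicity (which would force $\Omega^+(x) = \N_0$), so the orbit is a linear chain $\{x_{-m}, \ldots, x_k\}$ with $x_{-m} \notin V$ and $x_k \notin U$. Here I would take the uniform measure $\mu = \frac{1}{m+k+1}\sum_{n=-m}^{k}\delta_{x_n}$; for any $f \in C_0(V)$ both $\int f\, d\mu$ and $\int (f\circ\theta)\, d\mu$ collapse, after reindexing, to $\frac{1}{m+k+1}\sum_{n=-m+1}^{k}f(x_n)$, using $f(x_{-m}) = 0$ and $(f\circ\theta)(x_k) = 0$.

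The main technical care lies in the bookkeeping of the domains $D_n$: recognising that the telescoping in the infinite case works precisely because every $\theta^n(x)$ stays inside $U$, and identifying the finite-chain scenario as the only obstruction to the classical argument. Once this is sorted out, the weak-$\ast$ compactness step and the combinatorial verification in the finite-orbit case are both routine.
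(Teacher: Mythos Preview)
Your argument is correct and self-contained. The paper, by contrast, does not prove this statement directly: it simply cites \cite[Theorem 3.15]{arbieto:2009} for the general result on dynamics of partial actions (modulo a removable metrizability hypothesis), and alternatively invokes \cite[Proposition 2.7]{Scarparo:2017} together with the supramenability of $\Z$. So the two approaches are genuinely different in character. Your route is the classical Krylov--Bogoliubov construction adapted to the partial setting, with an explicit case split according to whether the forward or backward half-orbit terminates; this is elementary and requires no external machinery. The paper's route situates the result inside a general theory of invariant measures for partial actions of (supr)amenable groups, which is shorter on the page but opaque without the cited references. One minor remark: the phrase ``injectivity of $\theta$ rules out periodicity'' is slightly misleading, since what actually excludes periodicity in Case~3 is the finiteness of $\Omega^+(x)$ (a periodic point would have $\Omega^+(x)=\N_0$), as your parenthetical correctly notes; injectivity of $\theta$ plays no separate role there, and in fact the invariance computation for the uniform measure on $\{x_{-m},\dots,x_k\}$ goes through verbatim even without knowing the points are distinct.
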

\begin{proof}
Up to a metrizability assumption which can be removed, this is a special case of \cite[Theorem 3.15]{arbieto:2009}. The existence also follows from \cite[Proposition 2.7]{Scarparo:2017} together with the fact that $\Z$ as an abelian group is supramenable. 
\end{proof}
\begin{proposition}\label{prop:trace space}
 If $X$ is compact and $\V$ is a line bundle then $T(\CP(\E))\neq0$.
\end{proposition}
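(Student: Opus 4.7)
The plan is to construct a tracial state on $\CP(\E)$ by composing the canonical conditional expectation $\Phi\colon\CP(\E)\to\CP(\E)^0$ with the state on $\CP(\E)^0$ determined by a $\theta$-invariant probability measure, and then to verify the trace property using the bimodule structure that is available precisely because $\V$ is a line bundle. First I would invoke Theorem~\ref{thm:krylov bogoliubov} to obtain $\mu\in M^1_\theta(X)$, and define $\tau_\mu(f)=\int_X f\,d\mu$. Since $\V$ is a line bundle, Proposition~\ref{prop:line bundle bimodule} shows that $\E$ is a Hilbert $C(X)$-bimodule, so $\CP(\E)^0$ is canonically isomorphic to $C(X)$ via $\pi_\E$. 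I would then set $\tau:=\tau_\mu\circ\Phi$. Positivity and normalisation are automatic: $\Phi$ is a positive unital map and $\tau_\mu$ is a state on $C(X)$, so $\tau(1)=1$ and $\tau(a^*a)\ge 0$.

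The main content is verifying the trace property $\tau(ab)=\tau(ba)$. By continuity and linearity, together with the density of the algebraic direct sum of spectral subspaces in $\CP(\E)$, it suffices to check this on homogeneous elements $a\in\CP(\E)^p$ and $b\in\CP(\E)^q$. Since $ab,ba\in\CP(\E)^{p+q}$ and $\Phi$ annihilates $\CP(\E)^k$ for $k\neq 0$, both sides vanish automatically unless $p+q=0$. Thus I only need to handle $q=-p$, and as $\CP(\E)^{-p}=(\CP(\E)^p)^*$ and $\CP(\E)^p\cong\E^{\otimes p}$ as Hilbert bimodules for $p\ge 1$, I may reduce to the case $a=t_\E^p(\xi)$ and $b=t_\E^p(\zeta)^*$ for some $\xi,\zeta\in\E^{\otimes p}$.

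For such $a,b$, Lemma~\ref{lem:changing order} provides exactly the identity I need: $\xi\zeta^*\in C_0(D_p)$, $\zeta^*\xi\in C_0(D_{-p})$, and $\zeta^*\xi=(\xi\zeta^*)\circ\theta^p$. Writing $g=\xi\zeta^*$, the trace identity becomes $\tau_\mu(g)=\tau_\mu(g\circ\theta^p)$. I would prove this by a $p$-fold iteration of the basic invariance identity from Remark~\ref{rem:invariance of measures and states}: at the $k$-th step, $g\circ\theta^k$ lies in $C_0(D_{p-k})\subset C_0(V)$ for $0\le k<p$, so the invariance $\tau_\mu(\,\cdot\circ\theta)=\tau_\mu(\,\cdot\,)$ on $C_0(V)$ applies, and concatenating the $p$ steps gives the desired equality.

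The main obstacle is conceptual rather than computational: the whole argument hinges on the fact that the line-bundle hypothesis converts the algebraic relationship between $\xi\zeta^*$ and $\zeta^*\xi$ into the dynamical relationship mediated by $\theta^p$. For higher-rank bundles the fixed point algebra is genuinely noncommutative by Theorem~\ref{thm: fixed point algebra}, and this direct bridge between the trace identity and $\theta$-invariance breaks down; this is precisely why the line bundle assumption is essential in the statement, and also hints that for higher rank one should seek traces via invariant measures together with extra data capturing the fibrewise UHF structure.
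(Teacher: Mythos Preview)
Your proposal is correct and follows essentially the same approach as the paper: both construct the trace as $\tau_\mu\circ\Phi$ for a $\theta$-invariant measure $\mu$ obtained from Theorem~\ref{thm:krylov bogoliubov}, and both reduce the trace identity to the relation in Lemma~\ref{lem:changing order} combined with $\theta$-invariance of $\mu$. Your write-up is in fact a bit more explicit than the paper's---you spell out the reduction to homogeneous elements, the identification $\CP(\E)^0\cong C(X)$ via the bimodule structure, and the $p$-fold iteration needed to pass from invariance on $C_0(V)$ to $\tau_\mu(g)=\tau_\mu(g\circ\theta^p)$ for $g\in C_0(D_p)$---but the underlying argument is the same.
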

\begin{proof}
 Let $\mu$ be an $\theta$-invariant probability measure on $X$ in the sense of Definition \ref{def:invariant measure}. Let $\tau_\mu$ be the associated state as in Remark \ref{rem:invariance of measures and states}. We claim that $\tau_\mu\circ\Phi$ is tracial.

Take $a=\sum_{n=-N}^N\xi_n$ and $b=\sum_{n=-N}^N\eta_n$ for some $N\in\N$ and $\xi_n$ and $\eta_n$ in $\E^{\otimes n}$. It is enough to consider such elements according to Section \ref{sec:grading and the fixed point algebra}. Then we have
\begin{align*}
\Phi(ab)&=\Phi\left(\sum_{j,k=-N}^N\xi_j\eta_k\right)\\&=\sum_{j=-N}^N\xi_{-j}\eta_j=\sum_{j=-N}^N\eta_j\xi_{-j}\circ\theta^{j}
\end{align*}
where the last equality holds because of Lemma \ref{lem:changing order}. Since $\mu$ is $\theta$-invariant, this shows that $\tau_\mu\circ\Phi(ab)=\tau_\mu\circ\Phi(ba)$, which proves the claim. 

If the space $X$ is compact, then it has a $\theta$-invariant probability measure by Theorem \ref{thm:krylov bogoliubov}. It follows that $T(\CP(\E))\neq0$.  
\end{proof}
Assume that $X$ is compact. Recall that $M_\theta^1(X)$ denotes the space of $\theta$-invariant probability Radon measures on $X$. 

\begin{proposition}\label{prop:affine homeomorphism}
If $X$ is compact, $\theta$ is free, and $\V$ is a line bundle, then the map from $M^1_\theta(X)$ to $T(\CP(\E))$ sending $\mu$ to $\tau_\mu\circ\Phi$ is an affine homeomorphism.
\end{proposition}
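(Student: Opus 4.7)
My plan is as follows. The map $\Theta : \mu \mapsto \tau_\mu \circ \Phi$ is well-defined by Proposition \ref{prop:trace space}, affine by linearity of $\mu \mapsto \tau_\mu$, and weak-$*$ continuous: for any fixed $a \in \CP(\E)$, the image $\Phi(a)$ lies in $\CP(\E)^0 = C(X)$ (using Proposition \ref{prop:line bundle bimodule} together with the description of the fixed point algebra of a Hilbert bimodule in Section \ref{sec:hilbert bimodules}), so $\tau_{\mu_\alpha}(\Phi(a)) \to \tau_\mu(\Phi(a))$ whenever $\mu_\alpha \to \mu$ weakly. Since both $M^1_\theta(X)$ and $T(\CP(\E))$ are compact Hausdorff in the weak-$*$ topology, any continuous bijection is automatically a homeomorphism, so it suffices to prove bijectivity. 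Injectivity is immediate: $\Phi$ restricts to the identity on $C(X)$, so $\tau_\mu \circ \Phi\,\big|_{C(X)} = \tau_\mu$, and Riesz representation recovers $\mu$.

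For surjectivity I start with $\tau \in T(\CP(\E))$ and let $\mu$ be the Radon probability measure on $X$ with $\tau|_{C(X)} = \tau_\mu$. First I verify $\mu \in M^1_\theta(X)$. For $\xi \in \E$ the bimodule identities of Proposition \ref{prop:line bundle bimodule} give $\xi^*\xi = \langle\xi,\xi\rangle_\E \in C_0(U)$ and $\xi\xi^* = {}_\E\langle\xi,\xi\rangle = \langle\xi,\xi\rangle_\E \circ \theta^{-1} \in C_0(V)$. Traciality $\tau(\xi^*\xi) = \tau(\xi\xi^*)$ therefore reads $\int f\, d\mu = \int f \circ \theta^{-1}\, d\mu$ for $f = \langle\xi,\xi\rangle_\E$. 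By Remark \ref{rem:sigma fp module is full} such $f$ span a dense subspace of $C_0(U)$, so $\mu$ is $\theta$-invariant by Remark \ref{rem:invariance of measures and states}.

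The second step is to show $\tau = \tau \circ \Phi$, which combined with the above yields $\tau = \tau_\mu \circ \Phi$. Given $a \in \CP(\E)$ and $\varepsilon > 0$, Lemma \ref{lem:conditional expectation approximation} supplies $h_1, \ldots, h_m \in C(X)$ with $\|\frac{1}{m}\sum_j \bar{h}_j a h_j - \Phi(a)\| < \varepsilon$; from the proof the $h_j$ are produced by Lemma \ref{lem:functions vanishing} and satisfy $|h_j| \equiv 1$ on a compact set $C \subset X$. Traciality of $\tau$ gives
\[
\tau\Big(\tfrac{1}{m}\sum_j \bar{h}_j a h_j\Big) \;=\; \tau\Big(\tfrac{1}{m}\sum_j |h_j|^2 a\Big) \;=\; \tau(a) - \tau(\phi a),
\]
where $\phi := 1 - \tfrac{1}{m}\sum_j |h_j|^2 \in C(X)$ has $0 \leq \phi \leq 1$ and $\phi|_C = 0$. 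The Cauchy--Schwarz inequality for the state $\tau$ yields $|\tau(\phi a)|^2 \leq \tau(\phi)\,\tau(a^*\phi a) \leq \tau(\phi)\|a\|^2 \leq \mu(X\setminus C)\|a\|^2$. Since $\mu$ is a Radon probability measure on the compact space $X$, the quantity $\mu(X\setminus C)$ can be made arbitrarily small by enlarging $C$, and letting $\varepsilon \to 0$ simultaneously delivers $\tau(a) = \tau(\Phi(a))$.

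The main obstacle is the coordination of these two limits: in Lemma \ref{lem:conditional expectation approximation} the compact set $C$ is tied to the support of the zero-degree part of the approximant to $a$, so one cannot enlarge $C$ independently of the norm approximation. Setting up an iteration $(C_n, \varepsilon_n)$ with $C_n$ exhausting $X$ and $\varepsilon_n \to 0$ is the real technical task, but the Cauchy--Schwarz bound makes the argument robust, since only the $L^1(\mu)$-mass of $\phi$ on the complement of $C$ enters, not any uniform control. Once $\tau = \tau\circ\Phi$ is established, surjectivity of $\Theta$ and hence the proposition follow.
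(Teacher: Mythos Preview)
Your overall strategy matches the paper's, and your argument for $\theta$-invariance of $\mu$ via the bimodule identity $\xi\xi^*=\langle\xi,\xi\rangle_\E\circ\theta^{-1}$ is in fact cleaner than the paper's frame computation. The only issue is the ``obstacle'' you flag in the step $\tau=\tau\circ\Phi$: it is illusory, and your attempt to work around it via Cauchy--Schwarz and an exhaustion $(C_n,\varepsilon_n)$ is unnecessary.

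The point you are missing is that in Lemma~\ref{lem:functions vanishing} the compact set $C$ is a \emph{free input}: one may feed in any compact $C\subset X$. In the proof of Lemma~\ref{lem:conditional expectation approximation} the only constraint on $C$ is that it \emph{contain} the support of the zero-degree part $g_0b_0$ of the approximant, not that it equal it; enlarging $C$ breaks nothing. Since the proposition assumes $X$ compact, one simply takes $C=X$. Then $|h_j|\equiv 1$ on all of $X$, your function $\phi=1-\tfrac{1}{m}\sum_j|h_j|^2$ is identically zero, and the estimate collapses to
\[
\Big|\tau(a)-\tau(\Phi(a))\Big|=\Big|\tau\Big(\tfrac{1}{m}\sum_j\overline{h_j}ah_j-\Phi(a)\Big)\Big|\leq\Big\|\tfrac{1}{m}\sum_j\overline{h_j}ah_j-\Phi(a)\Big\|<\varepsilon,
\]
which is exactly the paper's one-line argument. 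No iteration, no Cauchy--Schwarz, no control of $\mu(X\setminus C)$ is needed.
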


\begin{proof}
From the proof of Proposition \ref{prop:trace space} we know  that the map is well defined, meaning that $\tau_\mu\circ\Phi$ is indeed a trace on $\CP(\E)$ for every $\mu$ in $M^1_\theta(X)$. It is injective because $\Phi$ maps onto $C(X)$, and hence $\tau_\mu\circ\Phi=\tau_\nu\circ\Phi$ implies $\tau_\mu=\tau_\nu$, which can only happen if $\mu$ and $\nu$ coincide. 

To prove surjectivity let $\tau$ be a trace on $\CP(\E)$. There exists a measure $\mu$ such that $\tau|_{C(X)}=\tau_\mu$. Take $a\in\CP(\E)$ and $\varepsilon>0$. According to Lemma \ref{lem:conditional expectation approximation} there exist $m\in\N$ and functions $h_1,...,h_m\in C(X)$ such that $|h_j|=1$ for all $1\leq j\leq m$, and such that $\frac{1}{m}\sum_{j=1}^m\overline{h_j}ah_j$ is $\varepsilon$-close to $\Phi(a)$.
    Hence
    \begin{align*}
    \norm{\tau(a)-\tau|_{C(X)}\circ\Phi(a)}&=\norm{\tau \left (\frac{1}{m}\sum_{j=1}^m\overline{h_j}ah_j \right)-\tau\circ\Phi(a)}=\norm{\tau \left (\frac{1}{m}\sum_{j=1}^m\overline{h_j}ah_j-\Phi(a) \right)}\\&\leq \norm{\frac{1}{m}\sum_{j=1}^m\overline{h_j}ah_j-\Phi(a)}<\varepsilon.
    \end{align*}
    Since $\varepsilon$ was arbitrary this implies
    \begin{align*}
    \tau=\tau|_{C(X)}\circ\Phi=\tau_\mu\circ\Phi.
    \end{align*}
    It is left to show that $\tau|_{C(X)}$ is indeed $\theta$-invariant, meaning $\tau|_{C(X)}(f\circ\theta)=\tau|_{C(X)}(f)$ for all $f$ in $C_0(V)$. Invariance of $\mu$ then follows from Remark \ref{rem:invariance of measures and states}. 
    
    Using an atlas for $\V$ and a suitable partition of unity, we can construct a countable frame $(\xi_i)_{i=1}^\infty$ of $\Gamma_0(\V,\theta)$ similarly to Lemma \ref{lem:frame vector bundle}. It has the property 
     \[\sum_{i=1}^\infty\xi_i^*\xi_i=1.\]
    Under the left action any $f\in C_0(V)$ acts on $\Gamma_0(\V,\theta)$ as a compact operator, which implies 
    \[f\sum_{i=1}^\infty\xi_i\xi_i^*=f.\]
    Thus
    \begin{align*}
    \tau(f)&=\tau\left(f\sum_{i=1}^\infty\xi_i\xi_i^*\right)=\sum_{i=1}^\infty\tau(f\xi_i\xi_i^*)\\&=\sum_{i=1}^\infty\tau(\xi_i(f\circ\theta)\xi_i^*)=\sum_{i=1}^\infty\tau(\xi_i^*\xi_i(f\circ\theta))\\&=\tau(f\circ\theta)
    \end{align*}
    where in the second last step we have used that $\tau$ is a trace. This shows that $\tau|_{C(X)}$ is $\theta$-invariant.
    
    We have thus established that the mapping $\mu\to\tau_\mu\circ\Phi$ is bijective. It is clear that it is affine. Furthermore, by definition, a net $\{\mu_\lambda\}$ of measures in $M^1_\theta(X)$ converges to $\mu\in M^1_\theta(X)$ if and only if $\{\tau_{\mu_\lambda}\}$ converges to $\tau_\mu$ in the weak*-topology of $C(X)'$. This, however, is equivalent to $\{\tau_{\mu_\lambda}\circ\Phi\}$ converging to $\tau\circ\Phi$ in $T(\CP(\E))$, because $\Phi$ maps onto $C(X)$. Hence the mapping $\mu\to\tau_\mu\circ\Phi$ is a homeomorphism. This concludes the proof. 
\end{proof}



\end{document}